\documentclass[11pt,a4paper]{amsart}
\usepackage{mathrsfs}
\usepackage{syntonly}
\usepackage{amsmath}
\usepackage{amsthm}
\usepackage{amsfonts}
\usepackage{amssymb}
\usepackage{latexsym}

\usepackage{amscd,amssymb,amsopn,amsmath,amsthm,graphics,amsfonts,mathrsfs,accents,enumerate,verbatim,calc}
\usepackage[dvips]{graphicx}
\usepackage[colorlinks=true,linkcolor=red,citecolor=blue]{hyperref}
\usepackage[all]{xy}
\usepackage{enumitem}

\date{}
\pagestyle{plain}
\textheight= 23 true cm \textwidth =16 true cm

\allowdisplaybreaks[4] \footskip=15pt
\renewcommand{\uppercasenonmath}[1]{}

\topmargin=6pt \evensidemargin0pt \oddsidemargin0pt
\numberwithin{equation}{section} \theoremstyle{plain}
\newtheorem{theorem}{Theorem}[section]
\newtheorem{corollary}[theorem]{Corollary}
\newtheorem{lemma}[theorem]{Lemma}
\newtheorem{proposition}[theorem]{Proposition}
\theoremstyle{definition}
\newtheorem{definition}[theorem]{Definition}

\newtheorem{setup}[theorem]{Setup}

\newtheorem*{ack*}{ACKNOWLEDGEMENTS}



\newcommand{\oo}{\otimes}
\newcommand{\pf}{\noindent\begin {proof}}
\newcommand{\epf}{\end{proof}}

\newcommand{\ccc}[3]{{
  \left(\begin{smallmatrix} {#1}   \\   {#2} \\\end{smallmatrix}\right)_{#3}}}

\newcommand{\Hom}{{\rm Hom}}

\pagestyle{myheadings}
\markboth{\rightline {\scriptsize   H. Y. Liu and R. M. Zhu}}
         {\leftline{\scriptsize Gorenstein and duality pair over triangular matrix rings}}

\begin{document}
\begin{center}
{\Large  \bf Gorenstein modules respect to duality pairs over triangular matrix rings}

\vspace{0.5cm}   Haiyu Liu and Rongmin Zhu\footnote{Corresponding author.\\
\indent {\it Key words and phrases.} duality pair; Ding injective module; triangular matrix ring;  recollement.\\
\indent 2010 {\it Mathematics Subject Classification.} 16E30, 18E30, 16D90.} \\
\medskip

\end{center}

\bigskip
\centerline { \bf  Abstract}
\medskip

\leftskip10truemm \rightskip10truemm \noindent
Let $A$, $B$ be two rings and $T=\left(\begin{smallmatrix}  A & M \\  0 & B \\\end{smallmatrix}\right)$ with $M$ an $A$-$B$-bimodule. We first construct a semi-complete duality pair $\mathcal{D}_{T}$ of $T$-modules using duality pairs in $A$-Mod and $B$-Mod respectively. Then we characterize when a left $T$-module is Gorenstein $D_{T}$-projective, Gorenstein $D_{T}$-injective or Gorenstein $D_{T}$-flat. These three class of $T$-modules will induce model structures on $T$-Mod. Finally we show that
the homotopy category of each of model structures above admits a recollement relative to corresponding stable categories. Our results give new characterizations to earlier results in this direction. \\
\vbox to 0.3cm{}\\

\leftskip10truemm \rightskip10truemm \noindent
\leftskip10truemm \rightskip10truemm \noindent
\hspace{1em} \\[2mm]

\leftskip0truemm \rightskip0truemm
\vspace{-5em}
\section { \bf Introduction}
 Let $A$ and $B$ be two rings. For any bimodule $_{A}M_{B}$, we write $T$ for
the upper triangular matrix ring $\left(\begin{smallmatrix}  A & M \\  0 & B \\\end{smallmatrix}\right)$. Such rings play an important role in the
study of the representation theory of artin rings and algebras.
Some important
classes of modules over upper triangular matrix rings have been studied by many authors (e.g., see \cite{ah00}, \cite{ah99}, \cite{xi12}, \cite{zh13} and \cite{ee14} and their references).
For example, Zhang \cite{zh13} explicitly described the Gorenstein projective modules over
a triangular matrix Artin algebra. Enochs and other authors  \cite{ee14} characterized when
a left module over a triangular matrix ring is Gorenstein projective or Gorenstein
injective under the “Gorenstein regular” condition.
Zhu, Liu and Wang \cite[Theorem 3.8]{zhu16} characterized Gorenstein flat modules over a triangular matrix ring $T$. Very recently, Mao \cite[Theorem 2.3]{mao20} further studied Gorenstein flat modules over triangular matrix rings which improves \cite[Theorem 3.8]{zhu16} .

Duality pairs were introduced by Holm-J{\o}rgensen in \cite{HJ09}. Recall that for a given $R$-module $M$, its character module is defined to be the $R$-module $M^{+}=\Hom_{\mathbb{Z}}(M, \mathbb{Q/Z})$. A duality pair  is essentially a pair of classes $\mathcal{(L, A)}$ such that $L\in \mathcal{L}$ if and only if $L^{+}\in\mathcal{A}$, and $\mathcal{A}$ is closed under direct summands and finite direct sums. Recently, Gillespie \cite{gj18} showed that the entire theory of Gorenstein homological algebra, complete with associated abelian model structures with stable homotopy categories, can be done with respect to a complete duality pair.
Assume that $\mathcal{D}_{R}=\mathcal{(L, A)}$ is a complete duality pair. We say that an $R$-module $N$ is
\emph{$\mathcal{D}_{R}$-Gorenstein projective} if $N = Z^{0}\mathbf{P}$ for some exact $\mathrm{Hom}_{R}(-,\mathcal{L})$-acyclic complex of projective $R$-modules $\mathbf{P}$. That is, both $\mathbf{P}$ and $\mathrm{Hom}_{R}(\mathbf{P},L)$ are exact (acyclic)
complexes for all $L\in \mathcal{L}$. Those familiar with Gorenstein homological algebra will
guess the definitions of the other concepts, see Definitions \ref{def2.3} and \ref{def2.4} for precise definitions.  When $R$ is a commutative Noetherian ring of finite Krull dimension, then these definitions, applied to the flat-injective duality pair, agree with the usual definitions of Gorenstein injective, projective and flat modules studied by Enochs and many other authors \cite{EJ1,ee00}. In fact, the requirement for the duality pair to be complete is too strong, so they defined semi-complete duality pairs in  \cite{gj21} and showed that if we applied to the duality pair $\mathcal{D}_{R}=(\langle _{R}\mathcal{F}\rangle, \langle \mathcal{I}_{R}\rangle)$, then these definitions agree with the definitions of projectively coresolved Gorenstein flat \cite{js18}, Ding injective \cite{dm08,gi10} and  Gorenstein flat modules \cite{ee00}.

The main goal of this paper is to study Gorenstein homological modules respect to  semi-complete duality pairs over triangular matrix rings.
Mao \cite{ma20} constructed a  complete duality pair $\mathcal{D}_{T}$ of $T$-modules using duality pairs in $A$-Mod and $B$-Mod respectively. Based on this result, in Section \ref{sec3}, we further study when  $\mathcal{D}_{T}$ is semi-complete. Then we characterize when a left $T$-module is Gorenstein $D_{T}$-projective, Gorenstein $D_{T}$-injective or Gorenstein $D_{T}$-flat (see Theorems \ref{the3.4}, \ref{the3.7} and \ref{the3.8}).
As applications, we investigate when a left $T$-module is projectively coresolved Gorenstein flat, Ding injective or Gorenstein flat. In fact, Mao \cite{mao22} characterized Ding injective modules over a triangular matrix ring $T$ under the condition ``$T$ is  right coherent, $_{A}M$ has finite
flat dimension, $M_{B}$ is finitely presented and has finite projective or FP-injective dimension''. Our result presents a new characterization of Ding injective modules (see Corollary \ref{cor3.8}). One can compare it with \cite[Theorem 4.4]{mao22}.

If we are given two cofibrantly generated model structures $\mathcal{M}_{A}$ and $\mathcal{M}_{B}$ on $A$-Mod and $B$-Mod respectively, we investigated in \cite{zhu20} when there exists a cofibrantly generated model structure $\mathcal{M}_{T}$ on $T$-Mod and a recollement of $\mathrm{Ho}(\mathcal{M}_{T})$ relative to $\mathrm{Ho}(\mathcal{M}_{A})$ and $\mathrm{Ho}(\mathcal{M}_{B})$. Let $\mathcal{D}_{R} = \mathcal{(L,A)}$ be a semi-complete duality pair. By \cite[Corollary 5.1]{gj21}, there are three abelian module structures induced by  $\mathcal{D}_{R}$: the Gorenstein $\mathcal{D}_{R}$-projective, $\mathcal{D}_{R}$-injective and $\mathcal{D}_{R}$-flat model structures.
We know that each of these model structures will gives rise to a stable category of modules. In Section \ref{sec4}, using the characterizations in Section 3, we show that
the homotopy category of each of  model structures above on $T$-Mod admits a recollement relative to corresponding homotopy categories (see Theorem \ref{the4.3}).  Finally, we give some applications of our  results for   projectively coresolved Gorenstein flat, Ding injective and Gorenstein flat model structures.
It should be noticed that the recollements of stable categories of Ding injective modules and Gorenstein flat modules over a triangular matrix ring $T$ have been established in \cite[Theorem 2.10]{ww21} and \cite[Theorem 4.12]{zhu20} respectively. Our result Theorem \ref{the4.3} gives a new criterion for the existence of these two recollements.

\bigskip
\section { \bf Preliminaries }

Throughout this paper, all rings are nonzero associative rings with identity and all modules are unitary. For a ring $R$, we write $R$-Mod (resp. Mod-$R$) for the category of left (resp. right) $R$-modules. $M_{R}$ (resp. $_{R}M$) denotes a right (resp. left) $R$-module.


\subsection{Duality pairs.} \cite[Definition 2.1]{HJ09} A \emph{ duality pair} over a ring $R$ is a pair ($\mathcal{L}, \mathcal{A}$), of classes of $R$-modules, satisfying

(1) $L\in \mathcal{L}$ if and only if $L^{+}\in \mathcal{A}$, and

(2) $\mathcal{A}$ is closed under direct summands and finite direct sums.

If ($\mathcal{L}, \mathcal{A}$) is a duality pair, then $\mathcal{L}$ is closed under pure submodules, pure quotients, and pure extensions.

\begin{definition}\cite[Appendix A]{br14} By a \emph{symmetric duality pair} over $R$ we
mean a pair of classes $\mathcal{(L,A)}$ for which both $\mathcal{(L,A)}$ and $\mathcal{(A,L)}$ are duality pairs. A duality pair ($\mathcal{L}, \mathcal{A}$) is called \emph{perfect} if $\mathcal{L}$ contains the module $R$, and is closed under direct sums and extensions.
\end{definition}

As in \cite{gj21}, we call $\mathcal{(L,A)}$ a \emph{semi-perfect }duality pair if it
has all the properties required to be a perfect duality pair except that $\mathcal{L}$ may not
be closed under extensions.

\begin{definition} \cite[Definition 2.5]{gj21} By a \emph{semi-complete} duality pair $\mathcal{(L,A)}$ we mean that $\mathcal{(L,A)}$ is a
symmetric duality pair with $\mathcal{(L,A)}$ being a semi-perfect duality pair. If $\mathcal{(L,A)}$ is indeed perfect, then we call it a \emph{complete} duality pair.
\end{definition}
 Several examples of perfect and symmetric duality pairs are given in  \cite{de18, gj18, HJ09}.

 \bigskip
\subsection{Gorenstein modules relative to a duality pair.}

Throughout this subsection we let $\mathcal{D}_{R}=(\mathcal{L},\mathcal{A})$ denote a fixed semi-complete duality pair over $R$.

\begin{definition} \label{def2.3} Given an $R$-module $N$, a chain complex $\mathbf{I}$ of injective $R$-modules
is called $N$-\emph{acyclic} if $\Hom_{R}(N,\mathbf{I})$ is exact. In a similar way, given a class $\mathcal{N}$
of $R$-modules, $\mathbf{I}$ will be called $\mathcal{N}$-\emph{acyclic} if it is $N$-acyclic for all $N\in \mathcal{N}$. On the
other hand, if $\mathbf{P}$ is a chain complex of projective (or even flat) $R$-modules, we call
it $N^{\otimes}$-\emph{acyclic} if $N\otimes_{R}\mathbf{P}$ is exact; and similarly we define $\mathcal{N}^{\otimes}$-\emph{acyclic} complexes of projective (or even flat) $R$-modules for a class $\mathcal{N}$.
\end{definition}

\begin{definition} \label{def2.4} An $R$-module $N$ is called

$(1)$ \emph{Gorenstein $(\mathcal{L},\mathcal{A})$-projective} (or Gorenstein $\mathcal{D}_{R}$-projective) if
$N =Z^{0}\mathbf{P}$ for some exact $\mathcal{A}^{\otimes}$-acyclic complex of projectives $\mathbf{P}$. Let $\mathcal{GP}_{\mathcal{D}_{R}}$ denote the class of all Gorenstein $\mathcal{D}_{R}$-projective modules.

$(2)$  \emph{$(\mathcal{L},\mathcal{A})$-Gorenstein projective} (or $\mathcal{D}_{R}$-Gorenstein projective) if
$N=Z^{0}\mathbf{P}$ for some exact complex of projectives $\mathbf{P}$ which remains exact after applying
$\Hom_{R}(-,L)$ for any $L \in \mathcal{L}$. Let $\mathcal{GP}^{\mathcal{D}_{R}}$ denote the class of all $\mathcal{D}_{R}$-Gorenstein projective modules.

$(3)$
An $R$-module $N$ is called \emph{Gorenstein $(\mathcal{L},\mathcal{A})$-injective} (or Gorenstein $\mathcal{D}_{R}$-injective) if
$N =Z^{0}\mathbf{I}$ for some exact $\mathcal{A}$-acyclic complex of injectives $\mathbf{I}$.
Let $\mathcal{GI}_{\mathcal{D}_{R}}$ denote the class
of all Gorenstein $\mathcal{D}_{R}$-injective modules.

$(4)$  An $R$-module $N$ is called \emph{Gorenstein $(\mathcal{L},\mathcal{A})$-flat} (or Gorenstein $\mathcal{D}_{R}$-flat) if
$N = Z^{0}\mathbf{F}$ for some exact $\mathcal{A}^{\otimes}$-acyclic complex of flat modules $\mathbf{F}$. Let $\mathcal{GF}_{\mathcal{D}_{R}}$ denote the class
of all Gorenstein $\mathcal{D}_{R}$-flat modules.
\end{definition}
Note that  a Gorenstein $\mathcal{D}_{R}$-projective module $N$ is always
 Gorenstein $\mathcal{D}_{R}$-flat. If $\mathcal{D}_{R}$ is a symmetric duality pair, then
$\mathcal{GP}_{\mathcal{D}_{R}}=\mathcal{GP}^{\mathcal{D}_{R}}$ by \cite[Theorem A6]{br14}.

 \bigskip
\subsection{Triangular matrix rings.}
 Let $A$, $B$ be two rings and $T=\left(\begin{smallmatrix}  A & M \\  0 & B \\\end{smallmatrix}\right)$ with $M$ an $A$-$B$-bimodule. Next, we recall the description of left $T$-modules via column vectors. Let $X_{1}\in A$-Mod and $X_{2}\in B$-Mod, and let $\phi^{X}:M\otimes_{B} X_{2}\rightarrow X_{1}$ be a homomorphism of left $A$-modules. The left $T$-module structure on $X=\binom{X_{1}}{X_{2}}$ is defined by the following identity
$$\left(\begin{matrix}  a & m\\
0&b\\\end{matrix}\right)\left(\begin{matrix}  x_{1}\\
x_{2}\\\end{matrix}\right)=\left(\begin{matrix}  ax_{1}+\phi^{X}(m\otimes x_{2}) \\
bx_{2}\\\end{matrix}\right),$$
where $a\in A,~b\in B,~m\in M,~x_{i}\in X_{i}$ for $i=1,~2$.	
According to \cite[Theorem 1.5]{gr82}, $T$-Mod is equivalent to the category whose objects are triples $X=\binom{X_{1}}{X_{2}}_{\phi^{X}}$, where $X_{1}\in A$-$\mathrm{Mod}$, $X_{2}\in B$-$\mathrm{Mod}$ and $\phi^{X}:M\otimes_{B} X_{2}\rightarrow X_{1}$ is an $A$-homomorphism, and whose morphisms between two objects $X=\binom{X_{1}}{X_{2}}_{\phi^{X}}$ and $Y=\binom{Y_{1}}{Y_{2}}_{\phi^{Y}}$~are pairs $\binom{f_{1}}{f_{2}}$ such that $f_{1}\in \mathrm{Hom}_{A}(X_{1},Y_{1})$, $f_{2}\in \mathrm{Hom}_{B}(X_{2},Y_{2})$, satisfying that the diagram
$$\xymatrix{
  M\otimes_{B} X_{2} \ar[d]_{\phi^{X}} \ar[r]^{1_{M}\otimes_{B} f_{2}} & M\otimes_{B} Y_{2} \ar[d]_{ \phi^{Y}} \\
  X_{1} \ar[r]^{f_{1}} & Y_{1}   }
$$
is commutative. In the rest of the paper we identify $T$-Mod with this category and, whenever there is no possible confusion, we omit the homomorphism $\phi$. Consequently, throughout the paper, a left $T$-module is a pair $\binom{X_{1}}{X_{2}}$. Given such a module $X$, we denote by $\widetilde{\phi}^{X}$ the morphism from $X_{2}$ to $\mathrm{Hom}_{A}(M,X_{1})$ given by $\widetilde{\phi}^{X}(x)(m)=\phi^{X}( m\otimes x)$ for each $x\in X_{2},~m\in M$.

Note that a sequence of $T$-modules
$$0\rightarrow\left(\begin{smallmatrix} M_{1}' \\   M_{2}' \\\end{smallmatrix}\right)\rightarrow\left(\begin{smallmatrix}  M_{1}  \\   M_{2} \\\end{smallmatrix}\right)\rightarrow\left(\begin{smallmatrix}  M_{1}''  \\   M_{2}'' \\\end{smallmatrix}\right)\rightarrow0$$
is exact if and only if both sequences $0\rightarrow M_{1}'\rightarrow M_{1}\rightarrow M_{1}''\rightarrow0$ of $A$-modules
and $0\rightarrow M_{2}'\rightarrow M_{2}\rightarrow M_{2}''\rightarrow0$ of $B$-modules are exact.

Recall that each right $T$-module is identified with a triple $(X, Y)_{\varphi}$,
where $X\in A$-$\mathrm{Mod}$, $Y\in B$-$\mathrm{Mod}$ and $\varphi:X\otimes_{A}M\rightarrow Y$ is a right $B$-homomorphism, and a right $T$-map is identified with a pair $(f_{1}, f_{2}) :(X_{1}, Y_{1})_{\varphi_{1}}\rightarrow(X_{2}, Y_{2})_{\varphi_{2}}$, where $f_{1}:X_{1}\rightarrow X_{2}$ is an $A$-map and $f_{2}:Y_{1}\rightarrow Y_{2}$ a $B$-map, such that $f_{2}\varphi_{1}=\varphi_{2}(f_{1}\otimes1)$. For a right $T$-module $(X, Y)_{\varphi}$, we denote $\widetilde{\varphi} :X\rightarrow \Hom_{B}(M, Y)$ the involution map given by $\widetilde{\varphi}(x)(m)=\varphi( x\otimes m )$ for each $x\in X,~m\in M$.

   Let $\ccc{X_{1}}{X_{2}}{\phi^{X}}$ be a left $T$-module and $(W_{1},W_{2})_{\varphi_{W}}$ be a right $T$-module. By \cite[Proposition 3.6.1]{pa17}, there is an isomorphism of abelian groups
   $$(W_{1},W_{2})_{\varphi_{W}}\oo_{T}\ccc{X_{1}}{X_{2}}{\phi^{X}}=(W_{1}\oo_{A}X_{1}\oplus W_{2}\oo_{B}X_{2})/H,$$
   where $H$ is generated by all elements of the form $\varphi_{W}(w_{1}\oo m)\oo x_{2}-w_{1}\oo\phi^{X}(m\oo x_{2})$ with $w_{1}\in W_{1}, x_{2}\in X_{2}, m\in M$.

\bigskip
\section{Gorenstein modules respect to duality pairs over triangular matrix rings}\label{sec3}
\bigskip

 For two classes $\mathcal{C}$ and $\mathcal{F}$ of modules, we set the following classes of $T$-modules:
\begin{align*}
&\mathfrak{U}^{\mathcal{C}}_{\mathcal{F}}=\{N=\left(\begin{smallmatrix}  N_{1}  \\   N_{2} \\\end{smallmatrix}\right)_{\phi^{N}}\mid N_{1}\in\mathcal{C},~N_{2}\in\mathcal{F} \};\\
&\mathfrak{B}^{\mathcal{C}}_{\mathcal{F}}=\{X=\left(\begin{smallmatrix}  X_{1}  \\   X_{2} \\\end{smallmatrix}\right)_{\phi^{X}}\mid \phi^{X}\text{ is monomorphic},~\mathrm{Coker}\phi^{X}\in\mathcal{C},~X_{2}\in\mathcal{F} \};\\
&\mathfrak{T}^{\mathcal{C}}_{\mathcal{F}}=\{Y=\left(\begin{smallmatrix}  Y_{1}  \\   Y_{2} \\\end{smallmatrix}\right)_{\widetilde{\phi}^{Y}}\mid \widetilde{\phi}^{Y}\text{ is epimorphic},~Y_{1}\in\mathcal{C},~\mathrm{Ker}\widetilde{\phi}^{Y}\in\mathcal{F} \}.
\end{align*}

There are similar symbols $\mathfrak{B}_{\mathcal{C},\mathcal{D}}$, $\mathfrak{T}_{\mathcal{C},\mathcal{D}}$ for the case of right $T$-modules.

L. Mao \cite{ma20} studied symmetric or perfect duality pairs over formal triangular matrix rings.

\begin{lemma} \label{lem3.1}\cite[Lemma 2.3 and Theorem 2.5]{ma20} Let $\mathcal{C}_{1}$ (resp. $\mathcal{C}_{2}$) be a class of left (resp. right) $A$-modules and $\mathcal{F}_{1}$ (resp. $\mathcal{F}_{2}$) be a
class of left (resp. right) $B$-modules. Suppose that  $M_{B}$ is finitely presented and $\mathrm{Tor}^{B}_{1}(M, \mathcal{F}_{1})=0$. Then $(\mathfrak{B}^{\mathcal{C}_{1}}_{\mathcal{F}_{1}}, \mathfrak{T}_{\mathcal{C}_{2},\mathcal{F}_{2}})$ is a complete duality pair if and only if
$(\mathcal{C}_{1},\mathcal{C}_{2})$ and $(\mathcal{F}_{1},\mathcal{F}_{2})$ are complete duality pairs.
\end{lemma}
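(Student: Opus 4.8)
The whole argument runs through the character-module functor $(-)^{+}$ and its interaction with the triple description of $T$-modules, so I would begin by recording the standard formulas. For a left $T$-module $X=\binom{X_{1}}{X_{2}}_{\phi^{X}}$ one has $X^{+}=(X_{1}^{+},X_{2}^{+})_{\varphi}$ as a right $T$-module, where — after the adjunction isomorphism $\Hom_{B}(M,X_{2}^{+})\cong(M\otimes_{B}X_{2})^{+}$ — the involution $\widetilde{\varphi}$ is identified with $(\phi^{X})^{+}$; dually, for a right $T$-module $W=(W_{1},W_{2})_{\varphi_{W}}$ one has $W^{+}=\binom{W_{1}^{+}}{W_{2}^{+}}_{\phi}$ with $\widetilde{\phi}$ identified with $(\varphi_{W})^{+}$ under $\Hom_{A}(M,W_{1}^{+})\cong(W_{1}\otimes_{A}M)^{+}$. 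Since $(-)^{+}$ is faithfully exact, $\phi^{X}$ is monic iff $(\phi^{X})^{+}$ is epic, and in that case $\mathrm{Ker}(\phi^{X})^{+}=(\mathrm{Coker}\,\phi^{X})^{+}$; unwinding the definitions, this already gives $X\in\mathfrak{B}^{\mathcal{C}_{1}}_{\mathcal{F}_{1}}\iff X^{+}\in\mathfrak{T}_{\mathcal{C}_{2},\mathcal{F}_{2}}$ using nothing more than the equivalences ``$N\in\mathcal{C}_{1}\iff N^{+}\in\mathcal{C}_{2}$'' and ``$N\in\mathcal{F}_{1}\iff N^{+}\in\mathcal{F}_{2}$'', while the closure of $\mathfrak{T}_{\mathcal{C}_{2},\mathcal{F}_{2}}$ under direct summands and finite direct sums is inherited from that of $\mathcal{C}_{2}$ and $\mathcal{F}_{2}$. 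Hence, for the $\Leftarrow$ direction, $(\mathfrak{B}^{\mathcal{C}_{1}}_{\mathcal{F}_{1}},\mathfrak{T}_{\mathcal{C}_{2},\mathcal{F}_{2}})$ is a duality pair once the two small pairs are.

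The crux of $\Leftarrow$, and the only place where ``$M_{B}$ finitely presented'' is really used, is the \emph{symmetry}: that $(\mathfrak{T}_{\mathcal{C}_{2},\mathcal{F}_{2}},\mathfrak{B}^{\mathcal{C}_{1}}_{\mathcal{F}_{1}})$ is again a duality pair. For this I would bring in the classical isomorphism $\mu_{N}\colon M\otimes_{B}N^{+}\xrightarrow{\ \sim\ }\Hom_{B}(M,N)^{+}$, $\mu_{N}(m\otimes g)(h)=g(h(m))$, which holds exactly because $M_{B}$ is finitely presented. Taking $N=W_{2}$ and tracing definitions, the structure map $\phi$ of $W^{+}$ is canonically $\widetilde{\varphi_{W}}^{+}\circ\mu_{W_{2}}$; since $\mu_{W_{2}}$ is invertible, $\phi$ is monic iff $\widetilde{\varphi_{W}}$ is epic, and then $\mathrm{Coker}\,\phi\cong(\mathrm{Ker}\,\widetilde{\varphi_{W}})^{+}$. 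Plugging in the duality-pair equivalences for $(\mathcal{C}_{1},\mathcal{C}_{2})$ and $(\mathcal{F}_{1},\mathcal{F}_{2})$ together with their symmetry yields $W\in\mathfrak{T}_{\mathcal{C}_{2},\mathcal{F}_{2}}\iff W^{+}\in\mathfrak{B}^{\mathcal{C}_{1}}_{\mathcal{F}_{1}}$, and closure of $\mathfrak{B}^{\mathcal{C}_{1}}_{\mathcal{F}_{1}}$ under summands and finite sums is immediate from that of $\mathcal{C}_{1},\mathcal{F}_{1}$. It remains to verify that $\mathfrak{B}^{\mathcal{C}_{1}}_{\mathcal{F}_{1}}$ is perfect. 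It contains $T=\binom{A}{0}\oplus\binom{M}{B}$ because $A\in\mathcal{C}_{1}$, $B\in\mathcal{F}_{1}$, $0\in\mathcal{C}_{1}\cap\mathcal{F}_{1}$, and the two column modules have injective structure map with cokernels $A$ and $0$; it is closed under arbitrary direct sums since $\mathcal{C}_{1},\mathcal{F}_{1}$ are. For extensions, let $0\to X'\to X\to X''\to0$ be exact in $T$-Mod with $X',X''\in\mathfrak{B}^{\mathcal{C}_{1}}_{\mathcal{F}_{1}}$. Because $X_{2}''\in\mathcal{F}_{1}$ and $\mathrm{Tor}^{B}_{1}(M,\mathcal{F}_{1})=0$, the row $0\to M\otimes_{B}X_{2}'\to M\otimes_{B}X_{2}\to M\otimes_{B}X_{2}''\to0$ stays exact, so the snake lemma applied to the vertical maps $\phi^{X'},\phi^{X},\phi^{X''}$ gives $\phi^{X}$ monic and an exact sequence $0\to\mathrm{Coker}\,\phi^{X'}\to\mathrm{Coker}\,\phi^{X}\to\mathrm{Coker}\,\phi^{X''}\to0$; extension-closure of $\mathcal{C}_{1}$ (applied to the cokernels) and of $\mathcal{F}_{1}$ (applied to $0\to X_{2}'\to X_{2}\to X_{2}''\to0$) then places $X$ in $\mathfrak{B}^{\mathcal{C}_{1}}_{\mathcal{F}_{1}}$. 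This finishes $\Leftarrow$.

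For $\Rightarrow$ I would argue with test objects. For a left $A$-module $C$, a right $A$-module $C'$, a left $B$-module $F$ and a right $B$-module $G$, the $T$-modules $\binom{C}{0}$, $(C',0)$, $\binom{M\otimes_{B}F}{F}$ with structure map the identity, and $(\Hom_{B}(M,G),G)$ with structure map the evaluation, belong to $\mathfrak{B}^{\mathcal{C}_{1}}_{\mathcal{F}_{1}}$, resp.\ $\mathfrak{T}_{\mathcal{C}_{2},\mathcal{F}_{2}}$, precisely when $C\in\mathcal{C}_{1}$, $C'\in\mathcal{C}_{2}$, $F\in\mathcal{F}_{1}$, $G\in\mathcal{F}_{2}$ (one first notes $0\in\mathcal{C}_{i}\cap\mathcal{F}_{i}$, which follows from $T$ lying in $\mathfrak{B}^{\mathcal{C}_{1}}_{\mathcal{F}_{1}}$, its character module lying in $\mathfrak{T}_{\mathcal{C}_{2},\mathcal{F}_{2}}$, and summand-closure). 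Each of these modules has a character module of the same shape whose involution is an identity map — for $(\Hom_{B}(M,G),G)$ one uses $\mu_{G}$ here — so the four clauses defining a complete duality pair for $(\mathfrak{B}^{\mathcal{C}_{1}}_{\mathcal{F}_{1}},\mathfrak{T}_{\mathcal{C}_{2},\mathcal{F}_{2}})$ restrict along these embeddings to the four clauses for $(\mathcal{C}_{1},\mathcal{C}_{2})$ and for $(\mathcal{F}_{1},\mathcal{F}_{2})$; as before, extension-closure of $\mathcal{F}_{1}$ uses $\mathrm{Tor}^{B}_{1}(M,\mathcal{F}_{1})=0$ to keep $0\to\binom{M\otimes_{B}F'}{F'}\to\binom{M\otimes_{B}F}{F}\to\binom{M\otimes_{B}F''}{F''}\to0$ exact. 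The step I expect to need the most care is pinning down the isomorphism $\mu_{N}$ and its naturality — that is, the exact role played by the finite presentation of $M_{B}$ — while the remaining work is bookkeeping with the snake lemma.
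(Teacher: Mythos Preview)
The paper does not actually prove Lemma \ref{lem3.1}; it merely records it with a citation to \cite[Lemma 2.3 and Theorem 2.5]{ma20} and then invokes ``by the proof of Lemma \ref{lem3.1}'' to pass to the semi-complete variant in Proposition \ref{prop3.2}. Your proposal therefore cannot be compared against an argument in this paper, only against the standard one in Mao's article.

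That said, your outline is correct and is exactly the expected proof. The two ingredients you isolate are the right ones: the adjunction $\Hom_{B}(M,X_{2}^{+})\cong(M\otimes_{B}X_{2})^{+}$, which holds unconditionally and handles the forward duality $X\in\mathfrak{B}^{\mathcal{C}_{1}}_{\mathcal{F}_{1}}\Leftrightarrow X^{+}\in\mathfrak{T}_{\mathcal{C}_{2},\mathcal{F}_{2}}$, and the natural map $\mu_{N}\colon M\otimes_{B}N^{+}\to\Hom_{B}(M,N)^{+}$, which is an isomorphism precisely because $M_{B}$ is finitely presented and carries the symmetry $W\in\mathfrak{T}_{\mathcal{C}_{2},\mathcal{F}_{2}}\Leftrightarrow W^{+}\in\mathfrak{B}^{\mathcal{C}_{1}}_{\mathcal{F}_{1}}$. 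Your identification $\phi=\widetilde{\varphi_{W}}^{+}\circ\mu_{W_{2}}$ and the consequent $\mathrm{Coker}\,\phi\cong(\mathrm{Ker}\,\widetilde{\varphi_{W}})^{+}$ are correct. The extension-closure step via the snake lemma, using $\mathrm{Tor}^{B}_{1}(M,\mathcal{F}_{1})=0$ to keep the tensored row exact, is also exactly how Mao argues. The test-object argument for $\Rightarrow$ is standard; your remark that one must first secure $0\in\mathcal{C}_{i}\cap\mathcal{F}_{i}$ from $T\in\mathfrak{B}^{\mathcal{C}_{1}}_{\mathcal{F}_{1}}$ and summand-closure is a detail worth keeping. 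Nothing is missing; the only point deserving a line of explicit verification in a write-up is the naturality of $\mu_{N}$ and that it intertwines the two descriptions of the structure map on $W^{+}$, as you yourself flag.
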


By the proof of Lemma \ref{lem3.1}, one can get the following result.

\begin{proposition} \label{prop3.2}
Let $\mathcal{C}_{1}$ (resp. $\mathcal{C}_{2}$) be a class of left (resp. right) $A$-modules and $\mathcal{F}_{1}$ (resp. $\mathcal{F}_{2}$) be a
class of left (resp. right) $B$-modules.
 Suppose that  $M_{B}$ is finitely presented.
Then $(\mathfrak{B}^{\mathcal{C}_{1}}_{\mathcal{F}_{1}}, \mathfrak{T}_{\mathcal{C}_{2},\mathcal{F}_{2}})$ is a semi-complete duality pair if and only if
$(\mathcal{C}_{1},\mathcal{C}_{2})$ and $(\mathcal{F}_{1},\mathcal{F}_{2})$ are semi-complete duality pairs.
\end{proposition}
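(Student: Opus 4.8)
The plan is to leverage Lemma~\ref{lem3.1} as a black box together with a careful analysis of exactly which hypotheses on $M_B$ are used for which implication. Recall that a semi-complete duality pair is the same datum as a complete duality pair with the single axiom ``$\mathcal{L}$ is closed under extensions'' deleted. So the strategy is: run the proof of Lemma~\ref{lem3.1} verbatim, but at each point where the hypothesis $\mathrm{Tor}^{B}_{1}(M,\mathcal{F}_{1})=0$ is invoked, check that it was only ever needed to verify closure under extensions, and hence can be dropped when we only ask for semi-completeness. Concretely, I would first recall that $(\mathcal{L},\mathcal{A})$ semi-complete means: (i) it is a symmetric duality pair, i.e.\ both $(\mathcal{L},\mathcal{A})$ and $(\mathcal{A},\mathcal{L})$ are duality pairs; (ii) $R\in\mathcal{L}$; and (iii) $\mathcal{L}$ is closed under direct sums. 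No extension-closure is required on either side.

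First I would handle the ``duality pair'' conditions. The character-module computation is the heart of Mao's proof: for a left $T$-module $X=\binom{X_1}{X_2}_{\phi^X}$ lying in $\mathfrak{B}^{\mathcal{C}_1}_{\mathcal{F}_1}$, one computes $X^{+}$ as a right $T$-module and identifies it, using that $M_B$ is finitely presented (so that $(M\otimes_B X_2)^+ \cong \Hom_B(M, X_2^+)$ naturally), with an object of $\mathfrak{T}_{\mathcal{C}_2,\mathcal{F}_2}$; the short exact sequence $0\to M\otimes_B X_2 \xrightarrow{\phi^X} X_1 \to \mathrm{Coker}\,\phi^X\to 0$ dualizes to $0\to(\mathrm{Coker}\,\phi^X)^+\to X_1^+\to \Hom_B(M,X_2^+)\to 0$, which exhibits $\widetilde{\varphi}$ for $X^+$ as an epimorphism with kernel $(\mathrm{Coker}\,\phi^X)^+ \in \mathcal{C}_2$ (using $(\mathcal{C}_1,\mathcal{C}_2)$ is a duality pair) and first coordinate $X_1^+\in$? — wait, one needs $X_1^+\in\mathcal{C}_2$, which does \emph{not} follow from $X_1\in\mathcal{C}_1$ in general; rather the correct bookkeeping (as in Mao) is that membership in $\mathfrak{B}$ is detected by $X_2$ and $\mathrm{Coker}\,\phi^X$, and one checks the ``only if'' and ``if'' of axiom (1) by chasing these two pieces through the duality pairs $(\mathcal{C}_i,\cdot)$ and $(\mathcal{F}_i,\cdot)$ in both directions. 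Crucially, \emph{this chase uses only that $M_B$ is finitely presented} — the $\mathrm{Tor}$-vanishing never enters, because no extension is being built. The same remark applies to the symmetry statement, i.e.\ that $(\mathfrak{T}_{\cdots},\mathfrak{B}^{\cdots}_{\cdots})$ is also a duality pair, obtained by running the identical argument on the opposite side. For axiom (2), closure of $\mathfrak{T}_{\mathcal{C}_2,\mathcal{F}_2}$ under direct summands and finite direct sums is immediate from the corresponding closure of $\mathcal{C}_2$ and $\mathcal{F}_2$ and the additivity of $\mathrm{Ker}\,\widetilde{\varphi}$.

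Next, the ``semi-perfect'' part: $T\in\mathfrak{B}^{\mathcal{C}_1}_{\mathcal{F}_1}$ and $\mathfrak{B}^{\mathcal{C}_1}_{\mathcal{F}_1}$ closed under arbitrary direct sums. Writing $T=\binom{A\oplus M}{B}$ with $\phi$ the inclusion $M\otimes_B B\cong M\hookrightarrow A\oplus M$, one sees $\phi^T$ is monic with $\mathrm{Coker}=A\in\mathcal{C}_1$ (as $A\in\mathcal{C}_1$ since $(\mathcal{C}_1,\mathcal{C}_2)$ is semi-perfect) and $B\in\mathcal{F}_1$ (similarly), so $T\in\mathfrak{B}^{\mathcal{C}_1}_{\mathcal{F}_1}$. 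Closure under direct sums: given $X^{(i)}\in\mathfrak{B}^{\mathcal{C}_1}_{\mathcal{F}_1}$, the coproduct has $\phi^{\bigoplus X^{(i)}}=\bigoplus\phi^{X^{(i)}}$, which is still monic (direct sums are exact), with cokernel $\bigoplus\mathrm{Coker}\,\phi^{X^{(i)}}\in\mathcal{C}_1$ and second coordinate $\bigoplus X_2^{(i)}\in\mathcal{F}_1$, using that $\mathcal{C}_1$ and $\mathcal{F}_1$ are closed under direct sums (part of semi-perfectness of the two input pairs). Conversely, for the ``only if'' direction, from semi-completeness of $(\mathfrak{B}^{\mathcal{C}_1}_{\mathcal{F}_1},\mathfrak{T}_{\mathcal{C}_2,\mathcal{F}_2})$ one recovers $A\in\mathcal{C}_1$, $B\in\mathcal{F}_1$ and the direct-sum closure of $\mathcal{C}_1,\mathcal{F}_1$ by testing against the standard $T$-modules $\binom{A}{0}$, $\binom{M}{B}$ and their coproducts, exactly as one extracts the summand data in Mao's converse. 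I would present all of this as: ``The proof of Lemma~\ref{lem3.1} establishes everything needed except extension-closure of the first class, and extension-closure is precisely what is dropped in passing from complete to semi-complete; moreover the hypothesis $\mathrm{Tor}^{B}_{1}(M,\mathcal{F}_1)=0$ is used in that proof \emph{only} to verify extension-closure of $\mathfrak{B}^{\mathcal{C}_1}_{\mathcal{F}_1}$ from that of $\mathcal{F}_1$ (it is there that one needs $M\otimes_B-$ to preserve the relevant short exact sequence of second coordinates). Hence removing that hypothesis and weakening `complete' to `semi-complete' on both sides of the equivalence, the same argument goes through.''

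The main obstacle is the one genuinely new verification: that $\mathrm{Tor}^{B}_{1}(M,\mathcal{F}_1)=0$ really is \emph{dispensable} for every part of the equivalence except the extension-closure clause. I would pin this down by isolating, in the forward direction, the step where one takes $X,X'\in\mathfrak{B}^{\mathcal{C}_1}_{\mathcal{F}_1}$ sitting in a short exact sequence $0\to X'\to X''\to X\to 0$ of $T$-modules and must show $X''\in\mathfrak{B}^{\mathcal{C}_1}_{\mathcal{F}_1}$: there the snake lemma applied to the $\phi$-maps produces a six-term sequence involving $\mathrm{Tor}^B_1(M,X_2)$, and it is exactly the vanishing of this $\mathrm{Tor}$ (guaranteed by $X_2\in\mathcal{F}_1$ and the hypothesis) that forces $\phi^{X''}$ to be monic and the cokernels to form a short exact sequence in $\mathcal{C}_1$. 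Every \emph{other} appeal in the proof — the character-module identification, the summand/direct-sum closures, the ring $T$ being in the class — uses only finite presentation of $M_B$ (for the natural isomorphism $(M\otimes_B N)^+\cong\Hom_B(M,N^+)$) or pure formalities. Documenting this clean separation, and citing ``the proof of Lemma~\ref{lem3.1}'' rather than reproducing it, is what makes the proposition a genuine corollary; I expect the write-up to be short once that separation is made explicit.
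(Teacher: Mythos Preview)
Your proposal is correct and takes essentially the same approach as the paper: the paper's entire proof is the single sentence ``By the proof of Lemma~\ref{lem3.1}, one can get the following result,'' and your write-up is precisely an unpacking of why that sentence is justified, namely that the $\mathrm{Tor}^B_1(M,\mathcal{F}_1)=0$ hypothesis in Mao's argument is invoked only for the extension-closure clause, which is exactly what is discarded in passing from complete to semi-complete. Your detailed separation of the steps (character-module identification via finite presentation, $T\in\mathfrak{B}^{\mathcal{C}_1}_{\mathcal{F}_1}$, direct-sum closure) is more explicit than the paper but not a different route.
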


 Let $\mathcal{X}$ be a class of left $R$-modules closed under direct summands and finite direct sums.
Given a natural number $n$ and a left $R$-module $N$, we shall say that $N$ has finite $\mathcal{X}$-projective (resp., $\mathcal{X}$-injective) dimension less than or equal to $n$ if there exists an exact sequence
$$0\rightarrow X_{n}\rightarrow \cdots\rightarrow  X_{1}\rightarrow X_{0}\rightarrow N\rightarrow 0~~~(0\rightarrow N\rightarrow X_{0}\rightarrow X_{1}\rightarrow\cdots\rightarrow X_{n}\rightarrow 0)$$ such that $X_{i}$ belongs to $\mathcal{X}$ for $i=0, 1,\cdots, n$.

\begin{setup}\label{set3.3} Let $\mathcal{C}_{1}$ (resp. $\mathcal{C}_{2}$) be a class of left (resp. right) $A$-modules and $\mathcal{F}_{1}$ (resp. $\mathcal{F}_{2}$) be a
class of left (resp. right) $B$-modules. In this section, we always assume that

$(1)$ $\mathcal{D}_{A}=(\mathcal{C}_{1},\mathcal{C}_{2})$ and $\mathcal{D}_{B}=(\mathcal{F}_{1},\mathcal{F}_{2})$ are semi-complete duality pairs;

$(2)$ $M_{B}$ is finitely presented and has finite $\mathcal{F}_{2}$-injective dimension.

\end{setup}

In the rest of this paper, we denote by $\mathcal{D}_{T}=(\mathfrak{B}^{\mathcal{C}_{1}}_{\mathcal{F}_{1}}, \mathfrak{T}_{\mathcal{C}_{2},\mathcal{F}_{2}})$ the semi-complete duality pair in $T$-Mod induced by semi-complete duality pairs $\mathcal{D}_{A}$ and $\mathcal{D}_{B}$.

\subsection{Gorenstein $\mathcal{D}_{T}$-projective  modules}

\begin{theorem} \label{the3.4} Assume Setup \ref{set3.3} and let  $X=\ccc{X_{1}}{X_{2}}{\phi^{X}}$ be a left $T$-module. Suppose that $\Hom_{B}(M,D)$ has finite $\mathcal{C}_{2}$-injective dimension for each $D\in\mathcal{F}_{2}$. Then the following conditions are equivalent:

$(1)$ $X$ is a Gorenstein $\mathcal{D}_{T}$-projective left $T$-module.

$(2)$ $X_{2}$ is a Gorenstein $\mathcal{D}_{B}$-projective left $B$-module, $\mathrm{Coker}\phi^{X}$ is a Gorenstein $\mathcal{D}_{A}$-projective left $A$-module, and $\phi^{X}$ is a monomorphism.
\end{theorem}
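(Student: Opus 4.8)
The plan is to prove the two implications by exploiting the description of $T$-modules as triples together with the ``column'' and ``deletion'' functors relating $T$-Mod to $A$-Mod and $B$-Mod. Recall (Definition~\ref{def2.4}(1)) that $X$ is Gorenstein $\mathcal{D}_T$-projective iff $X=Z^0\mathbf{P}$ for an exact $\mathfrak{T}_{\mathcal{C}_2,\mathcal{F}_2}^{\otimes}$-acyclic complex $\mathbf{P}$ of projective $T$-modules. The key structural facts I would use are: (i) every projective $T$-module has the form $\binom{P_1}{0}\oplus\binom{M\otimes_B P_2}{P_2}$ with $P_1\in A\text{-Mod}$, $P_2\in B\text{-Mod}$ projective, so a complex $\mathbf{P}$ of projective $T$-modules is, componentwise, a short exact sequence $0\to M\otimes_B\mathbf{P}_2\to\mathbf{P}_1\to\mathbf{Q}\to 0$ of complexes where $\mathbf{P}_2$ is a complex of projective $B$-modules and $\mathbf{Q}=\operatorname{Coker}$ is a complex of projective $A$-modules; (ii) exactness of a complex of $T$-modules is equivalent to exactness in each coordinate; (iii) the tensor-product identity for $(W_1,W_2)_{\varphi_W}\otimes_T\binom{X_1}{X_2}$ recalled in the Preliminaries, which lets me compute $N\otimes_T\mathbf{P}$ for $N\in\mathfrak{T}_{\mathcal{C}_2,\mathcal{F}_2}$ in terms of $\otimes_A$ and $\otimes_B$.

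For $(1)\Rightarrow(2)$: given such a $\mathbf{P}$, write it coordinatewise as above. First I would check $\phi^X$ is mono: since $0\to M\otimes_B\mathbf{P}_2\to\mathbf{P}_1\to\mathbf{Q}\to0$ is degreewise split exact and $\mathbf{Q}$ is a complex of projectives, taking $Z^0$ gives $0\to M\otimes_B Z^0\mathbf{P}_2\to Z^0\mathbf{P}_1\to Z^0\mathbf{Q}\to 0$ exact (using the snake lemma and exactness of all three complexes), and one identifies $\phi^X$ with $M\otimes_B X_2 = M\otimes_B Z^0\mathbf{P}_2\to X_1=Z^0\mathbf{P}_1$, hence mono with $\operatorname{Coker}\phi^X=Z^0\mathbf{Q}$. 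Next, $X_2=Z^0\mathbf{P}_2$ and I must show $\mathbf{P}_2$ witnesses Gorenstein $\mathcal{D}_B$-projectivity, i.e. $D\otimes_B\mathbf{P}_2$ is exact for $D\in\mathcal{F}_2$: apply the $T$-acyclicity hypothesis to the right $T$-module $(\operatorname{Hom}_B(M,D),D)$, which lies in $\mathfrak{T}_{\mathcal{C}_2,\mathcal{F}_2}$ precisely when $\operatorname{Hom}_B(M,D)$ has the right properties (this is where the hypothesis ``$\operatorname{Hom}_B(M,D)$ has finite $\mathcal{C}_2$-injective dimension'' enters, together with the definition of $\mathfrak{T}$), and compute the tensor product; a dimension-shifting argument converts ``finite $\mathcal{C}_2$-injective dimension'' into the needed exactness since complexes of projectives stay $\mathcal{X}$-acyclic under finite resolutions. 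Similarly, pairing with suitable right $T$-modules supported on the $A$-coordinate and using Setup~\ref{set3.3}(2) (finite $\mathcal{F}_2$-injective dimension of $M_B$, so $M\otimes_B-$ behaves well) shows $\operatorname{Coker}\phi^X=Z^0\mathbf{Q}$ is Gorenstein $\mathcal{D}_A$-projective.

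For $(2)\Rightarrow(1)$: assume $X_2$, $\operatorname{Coker}\phi^X$ are Gorenstein $\mathcal{D}_B$-, $\mathcal{D}_A$-projective and $\phi^X$ mono. Choose complete resolutions $\mathbf{P}_2$ of $X_2$ (exact, $\mathcal{F}_2^{\otimes}$-acyclic, projective $B$-modules) and $\mathbf{Q}$ of $\operatorname{Coker}\phi^X$ (exact, $\mathcal{C}_2^{\otimes}$-acyclic, projective $A$-modules). Then $M\otimes_B\mathbf{P}_2$ is a complex of projective $A$-modules which is exact (flatness considerations / $\operatorname{Tor}$ vanishing, supplied by $M_B$ having finite $\mathcal{F}_2$-injective dimension together with the hypotheses on $\mathbf{P}_2$); I would form $\mathbf{P}_1$ as a degreewise-split extension of $\mathbf{Q}$ by $M\otimes_B\mathbf{P}_2$ using the Horseshoe-type construction, so that $\mathbf{P}=\binom{\mathbf{P}_1}{\mathbf{P}_2}$ is a complex of projective $T$-modules with $Z^0\mathbf{P}=X$ (matching the given $\phi^X$ via the mono/cokernel data). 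It remains to verify $\mathbf{P}$ is $\mathfrak{T}_{\mathcal{C}_2,\mathcal{F}_2}^{\otimes}$-acyclic: for $N=(N_1,N_2)_{\varphi}\in\mathfrak{T}_{\mathcal{C}_2,\mathcal{F}_2}$, the tensor identity gives a short exact sequence relating $N\otimes_T\mathbf{P}$ to $\ker\widetilde{\varphi}\otimes_A(M\otimes_B\mathbf{P}_2)$, $N_1\otimes_A\mathbf{Q}$ and $N_2\otimes_B\mathbf{P}_2$ — roughly $N\otimes_T\mathbf{P}$ fits in an exact triangle built from $N_1\otimes_A\mathbf{P}_1$ and $N_2\otimes_B\mathbf{P}_2$, and unwinding $N_1\otimes_A\mathbf{P}_1$ via the split extension reduces to exactness of $(\ker\widetilde{\varphi})\otimes_A M\otimes_B\mathbf{P}_2$ and $N_1\otimes_A\mathbf{Q}$; since $\ker\widetilde{\varphi}\in\mathcal{F}_2$ (hence, after applying $M\otimes_B-$ and using the finite $\mathcal{F}_2$-injective dimension of $M_B$ with dimension shifting, $\mathbf{P}_2$ is still ``$M\otimes_B(\ker\widetilde\varphi)^{\otimes}$''-acyclic) and $N_1$ has finite $\mathcal{C}_2$-injective dimension (the remaining new hypothesis, feeding a dimension-shift against the $\mathcal{C}_2^{\otimes}$-acyclic $\mathbf{Q}$), both are exact.

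I expect the main obstacle to be the bookkeeping in $(2)\Rightarrow(1)$, specifically verifying the $\mathfrak{T}_{\mathcal{C}_2,\mathcal{F}_2}^{\otimes}$-acyclicity: one must correctly decompose $N\otimes_T\mathbf{P}$ via the tensor identity from the Preliminaries into manageable pieces and then push the ``finite $\mathcal{F}_2$- and $\mathcal{C}_2$-injective dimension'' hypotheses through a dimension-shifting argument to upgrade $\mathcal{F}_2^{\otimes}$- and $\mathcal{C}_2^{\otimes}$-acyclicity of the building-block complexes to acyclicity against the finitely-resolved modules $\ker\widetilde\varphi$-via-$M$ and $N_1$. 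The $A$-module side also requires care that $M\otimes_B\mathbf{P}_2$ stays a complex of \emph{projective} $A$-modules and stays exact, which is exactly where Setup~\ref{set3.3}(2) is used; handling the mono condition on $\phi^X$ so the synthesized $\mathbf{P}$ genuinely has $Z^0\mathbf{P}\cong X$ as $T$-modules (not just in each coordinate) is the other delicate point, resolved by a careful Horseshoe construction respecting the structure maps.
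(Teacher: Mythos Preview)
Your $(1)\Rightarrow(2)$ direction is essentially the paper's argument: decompose $\mathbf{P}$ coordinatewise and test $\mathfrak{T}_{\mathcal{C}_2,\mathcal{F}_2}^{\otimes}$-acyclicity against $(G,0)$ for $G\in\mathcal{C}_2$ (giving the $\mathrm{Coker}\phi^X$ statement) and against $(\Hom_B(M,D),D)$ for $D\in\mathcal{F}_2$ (giving the $X_2$ statement). One small inaccuracy: the module $(\Hom_B(M,D),D)$ lies in $\mathfrak{T}_{\mathcal{C}_2,\mathcal{F}_2}$ automatically since its $\widetilde\varphi$ is the identity (kernel $0\in\mathcal{C}_2$); the hypothesis on $\Hom_B(M,D)$ is used instead to show $(\Hom_B(M,D),0)$ has finite $\mathfrak{T}_{\mathcal{C}_2,\mathcal{F}_2}$-injective dimension, so that $(0,D)\otimes_T\mathbf{P}$ is forced to be exact by the short exact sequence of right $T$-modules $0\to(0,D)\to(\Hom_B(M,D),D)\to(\Hom_B(M,D),0)\to 0$.

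Your $(2)\Rightarrow(1)$ direction has a genuine gap. The Horseshoe construction you propose cannot be carried out on the \emph{coresolution} side without further input. Concretely: to embed $X$ into the projective $T$-module $\binom{M\otimes_B P_2^0}{P_2^0}\oplus\binom{Q^0}{0}$ compatibly with the given embeddings of the two ends, you must extend the map $M\otimes_B X_2\to M\otimes_B P_2^0$ along $\phi^X$, and the obstruction is the image of the class $[0\to M\otimes_B X_2\to X_1\to\mathrm{Coker}\phi^X\to 0]$ in $\Ext^1_A(\mathrm{Coker}\phi^X,\,M\otimes_B P_2^0)$. Nothing in Setup~\ref{set3.3} or the extra hypothesis forces this to vanish: there is no assumption on $_AM$ placing $M\otimes_B P_2^0$ in $\mathcal{C}_1$ (or giving it finite $\mathcal{C}_1$-projective dimension), which is what you would need to kill this $\Ext$ via the $\Hom_A(-,\mathcal{C}_1)$-acyclicity of $\mathrm{Coker}\phi^X$. (Relatedly, your claim that ``$M\otimes_B\mathbf{P}_2$ is a complex of projective $A$-modules'' is false in general; it is $\binom{M\otimes_B P_2^i}{P_2^i}$ that is projective over $T$, not $M\otimes_B P_2^i$ over $A$.) Also note that for $N\in\mathfrak{T}_{\mathcal{C}_2,\mathcal{F}_2}$ one has $\ker\widetilde\varphi\in\mathcal{C}_2$, not $\mathcal{F}_2$ as you wrote.

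The paper avoids this obstruction by a different route: it uses the short exact sequence
\[
0\longrightarrow\binom{M\otimes_B X_2}{X_2}_{\mathrm{id}}\longrightarrow\binom{X_1}{X_2}_{\phi^X}\longrightarrow\binom{\mathrm{Coker}\phi^X}{0}\longrightarrow 0,
\]
shows \emph{separately} that each end is Gorenstein $\mathcal{D}_T$-projective (by exhibiting the explicit complexes $\binom{M\otimes_B\mathbf{P}_2}{\mathbf{P}_2}$ and $\binom{\mathbf{Q}}{0}$ and checking $\mathfrak{T}_{\mathcal{C}_2,\mathcal{F}_2}^{\otimes}$-acyclicity directly), and then invokes the nontrivial fact that $\mathcal{GP}_{\mathcal{D}_T}$ is closed under extensions (the dual of \cite[Lemma~3.5]{gj21}). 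Extension-closure is precisely the device that replaces the missing Horseshoe step; it is not automatic and is where the semi-completeness of $\mathcal{D}_T$ is used. If you want to salvage your approach, you would need to prove this extension-closure first---at which point you are essentially doing the paper's proof.
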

\begin{proof}

By Proposition \ref{prop3.2}, we know that $\mathcal{D}_{T}$ is a semi-complete duality pair if and only if
$\mathcal{D}_{A}$ and $\mathcal{D}_{B}$ are semi-complete duality pairs.

$(1)\Rightarrow (2)$
If  $X$ is a Gorenstein $\mathcal{D}_{T}$-projective left $T$-module, then there is an exact  $\mathfrak{T}_{\mathcal{C}_{2},\mathcal{F}_{2}}^{\otimes}$-acyclic complex of projective left $T$-modules
$$~\mathbf{P}:~\cdots \longrightarrow
\ccc{P^{-1}_{1}}{P^{-1}_{2}}{\phi^{-1}}\stackrel{\binom{\partial^{-1}_{1}}{\partial^{-1}_{2}}{}}
\longrightarrow\ccc{P^{0}_{1}}{P^{0}_{2}}{\phi^{0}}\stackrel{\binom{\partial^{0}_{1}}{\partial^{0}_{2}}}\longrightarrow
\ccc{P^{1}_{1}}{P^{1}_{2}}{\phi^{1}}\stackrel{}\longrightarrow
\cdots$$
with $X=\mathrm{Ker}\binom{\partial^{0}_{1}}{\partial^{0}_{2}}$. By \cite[Theorem 3.1]{ah00}, we get the exact sequence
$$~\mathbf{P}_{2}:~\cdots\rightarrow P_{2}^{-1}\stackrel{\partial^{-1}_{2}}\rightarrow  P_{2}^{0}\stackrel{\partial^{0}_{2}}\rightarrow P_{2}^{1} \rightarrow \cdots $$
of projective left $B$-modules with $X_{2}=\mathrm{Ker}(\partial^{0}_{2})$.
Let $D\in \mathcal{F}_{2}$. Then there is an exact sequence of right $T$-modules
$$0\rightarrow (0, D)\rightarrow (\Hom_{B}(M,D),D) \rightarrow (\Hom_{B}(M,D),0)\rightarrow 0,$$
which induces the exact sequence of complexes
$$0\rightarrow (0, D)\oo_{T}\mathbf{P}\rightarrow (\Hom_{B}(M,D),D)\oo_{T}\mathbf{P} \rightarrow (\Hom_{B}(M,D),0)\oo_{T}\mathbf{P}\rightarrow 0.$$
Since $(\Hom_{B}(M,D),D)$ is belonging to $\mathfrak{T}_{\mathcal{C}_{2},\mathcal{F}_{2}}$, the complex $(\Hom_{B}(M,D),D)\oo_{T}\mathbf{P}$ is exact.
By assumption, $\Hom_{B}(M,D)$ has finite $\mathcal{C}_{2}$-injective dimension, so one can check that $(\Hom_{B}(M,D),0)$ has finite $\mathfrak{T}_{\mathcal{C}_{2},\mathcal{F}_{2}}$-injective dimension and the complex $(\Hom_{B}(M,D),0)\oo_{T}\mathbf{F}$ is exact.
It follows that $D\oo_{B}\mathbf{P}_{2}=(0, D)\oo_{T}\mathbf{P}$ is exact. Whence $X_{2}$ is a Gorenstein $\mathcal{D}_{B}$-projective left $B$-module.

Let $\iota_{1}:X_{1}\rightarrow P^{0}_{1}$ and $\iota_{2}:X_{2}\rightarrow P^{0}_{2}$ be the inclusions. Consider the following commutative diagram in $A$-Mod:
$$\xymatrix{
  M\otimes_{B} X_{2} \ar[d]_{\phi^{X}} \ar[r]^{1_{M}\otimes_{B} \iota_{2}} & M\otimes_{B} P^{0}_{2} \ar[d]_{ \phi^{P^{0}}} \\
  X_{1} \ar[r]^{\iota_{1}} & P^{0}_{1}   }
$$
By assumption that $M_{B}$ has finite $\mathcal{F}_{2}$-injective dimension, one can check that $M\oo_{B}\mathbf{P}_{2}$ is exact. Thus $1_{M}\otimes_{B} \iota_{2}$ is a monomorphism. Also $\phi^{P^{0}}$ is a monomorphism by \cite[Theorem 3.1]{ah00}. So $\phi^{X}$ is a monomorphism by the commutative diagram above.

For any $i\in \mathbb{Z}$, there exists $\overline{\partial^{i}_{1}}:P^{i}_{1}/\mathrm{Im}(\phi^{i}) \rightarrow P^{i+1}_{1}/\mathrm{Im}(\phi^{i+1}) $ such that the following diagram with exact rows is commutative.
$$\xymatrix{
   & \vdots \ar[d]_{}  & \vdots \ar[d]_{}  & \vdots \ar@{..>}[d]_{}  &  \\
  0  \ar[r]^{} & M\oo_{B}P^{-1}_{2} \ar[d]_{1\oo\partial^{-1}_{2}} \ar[r]^-{\phi^{-1}} & P^{-1}_{1} \ar[d]_{\partial^{-1}_{1}} \ar[r]^{} & P^{-1}_{1}/\mathrm{Im}(\phi^{-1})  \ar@{..>}[d]_{\overline{\partial^{-1}_{1}}} \ar[r]^{} & 0  \\
   0  \ar[r]^{} & M\oo_{B}P^{0}_{2} \ar[d]_{1\oo\partial^{0}_{2}} \ar[r]^-{\phi^{0}} & P^{0}_{1} \ar[d]_{\partial^{0}_{1}} \ar[r]^{} & P^{0}_{1}/\mathrm{Im}(\phi^{0})  \ar@{..>}[d]_{\overline{\partial^{0}_{1}}} \ar[r]^{} & 0  \\
  0  \ar[r]^{} & M\oo_{B}P^{1}_{2} \ar[d]_{} \ar[r]^-{\phi^{1}} & P^{1}_{1} \ar[d]_{} \ar[r]^{} & P^{1}_{1}/\mathrm{Im}(\phi^{1})  \ar@{..>}[d]_{} \ar[r]^{} & 0  \\
   & \vdots & \vdots & \vdots&   }$$
Since the first column and the second column are exact, we get the exact sequence of projective left $A$-modules
$$~\overline{\mathbf{P}_{1}}:~\cdots\rightarrow P^{-1}_{1}/\mathrm{Im}(\phi^{-1})\stackrel{\overline{\partial^{-1}_{1}}}\rightarrow  P^{0}_{1}/\mathrm{Im}(\phi^{0})\stackrel{\overline{\partial^{0}_{1}}}\rightarrow P^{1}_{1}/\mathrm{Im}(\phi^{1}) \rightarrow \cdots $$
with $X_{1}/\mathrm{Im}(\phi^{X}) \cong \mathrm{Ker}(\overline{\partial^{0}_{1}})$.

Let $G\in\mathcal{C}_{2}$. Then each exact sequence of left $A$-modules
$$\xymatrix{0\ar[r] &M\oo_{B}P^{i}_{2}  \ar[r]^-{\phi^{i}} & P^{i}_{1}  \ar[r]^{} & P^{i}_{1}/\mathrm{Im}(\phi^{i})  \ar[r]^{} & 0}$$
induces the exact sequence
$$\xymatrix{ G\oo_{A}M\oo_{B}P^{i}_{2}  \ar[r]^-{1\oo\phi^{i}} & G\oo_{A}P^{i}_{1}  \ar[r]^{} & G\oo_{A}(P^{i}_{1}/\mathrm{Im}(\phi^{i}))  \ar[r]^{} & 0}$$
So we have
$$G\oo_{A}(P^{i}_{1}/\mathrm{Im}(\phi^{i}))\cong G\oo_{A}P^{i}_{1}/\mathrm{Im}(1\oo\phi^{i})\cong (G,0)\oo_{T}\ccc{P^{i}_{1}}{P^{i}_{2}}{\phi^{i}}.$$

Since $(G,0)\in \mathfrak{T}_{\mathcal{C}_{2},\mathcal{F}_{2}}$, $G\oo_{A}\overline{\mathbf{P}_{1}}\cong(G, 0)\oo_{T}\mathbf{P}$ is exact. So $\mathrm{Coker}\phi^{X}$ is a Gorenstein $\mathcal{D}_{A}$-projective left $A$-module.

$(2)\Rightarrow(1)$ Since $\phi^{X}$ is a monomorphism, there exists an exact sequence in $T$-Mod
$$0\rightarrow\left(\begin{matrix}  M\otimes_{B}X_{2}  \\ X_{2}  \\\end{matrix}\right)_{id}\rightarrow\left(\begin{matrix}  X_{1}  \\ X_{2} \\\end{matrix}\right)_{\phi^{X}}\rightarrow\left(\begin{matrix}  \mathrm{coker}\phi^{X}  \\ 0 \\\end{matrix}\right)_{0}\rightarrow0.
$$
By the dual of \cite[Lemma 3.5]{gj21}, one can check that the class of  Gorenstein $\mathcal{D}_{T}$-projective left $T$-module is  closed under extensions. So we only need to verify that $\left(\begin{smallmatrix}  M\otimes_{B}X_{2}  \\ X_{2}  \\\end{smallmatrix}\right)$ and $\left(\begin{smallmatrix}  \mathrm{coker}\phi^{X}  \\ 0 \\\end{smallmatrix}\right)$ are  Gorenstein $\mathcal{D}_{T}$-projective.

We first prove that $\left(\begin{smallmatrix}  M\otimes_{B}X_{2}  \\ X_{2}  \\\end{smallmatrix}\right)$ is a Gorenstein $\mathcal{D}_{T}$-projective module. In fact, there is an exact   $\mathcal{F}_{2}$-acyclic complex
$$~\mathbf{U}:~\cdots\rightarrow Q^{-1}\stackrel{\partial^{-1}}\rightarrow  Q^{0}\stackrel{\partial^{0}}\rightarrow Q^{1} \rightarrow \cdots $$
of projective left $B$-modules with $X_{2}=\mathrm{Ker}(\partial^{0}_{2})$. Since $M_{B}$ has finite $\mathcal{F}_{2}$-injective dimension, one can check that $M\oo_{B}\mathbf{U}$ is exact.  So we get the exact sequence of projective left $T$-modules
$$~\mathbf{V}:~\cdots \longrightarrow
\ccc{M\oo_{B}Q^{-1}}{Q^{-1}}{}\stackrel{\binom{1\oo\partial^{-1}}{\partial^{-1}}{}}
\longrightarrow\ccc{M\oo_{B}Q^{0}}{Q^{0}}{}\stackrel{\binom{1\oo\partial^{0}}{\partial^{0}}}\longrightarrow
\ccc{M\oo_{B}Q^{1}}{Q^{1}}{}\stackrel{}\longrightarrow
\cdots$$
with $\ccc{ M\otimes_{B}X_{2}}{ X_{2}}{}\cong\mathrm{Ker}\binom{1\oo\partial^{0}}{\partial^{0}}$. For any  right $T$-module $(H_{1}, H_{2})\in\mathfrak{T}_{\mathcal{C}_{2},\mathcal{F}_{2}}$, there exists an exact sequence in Mod-$T$
$$0\rightarrow (0, H_{2})\rightarrow (H_{1}, H_{2}) \rightarrow (H_{1},0)\rightarrow 0.$$
Since each $\ccc{M\oo_{B}P^{i}}{P^{i}}{}$ is a projective left $T$-module, we get the exact sequence
$$0\rightarrow (0, H_{2})\oo_{T}\ccc{M\oo_{B}P^{i}}{P^{i}}{}\rightarrow (H_{1}, H_{2})\oo_{T}\ccc{M\oo_{B}P^{i}}{P^{i}}{} \rightarrow (H_{1},0)\oo_{T}\ccc{M\oo_{B}P^{i}}{P^{i}}{}\rightarrow 0.$$
Note that $(H_{1},0)\oo_{T}\ccc{M\oo_{B}P^{i}}{P^{i}}{}\cong (H_{1}\oo_{A}M\oo_{B}P^{i})/(H_{1}\oo_{A}M\oo_{B}P^{i})=0$. Thus $(H_{1}, H_{2})\oo_{T}\ccc{M\oo_{B}P^{i}}{P^{i}}{}\cong(0, H_{2})\oo_{T}\ccc{M\oo_{B}P^{i}}{P^{i}}{}$. So $(H_{1}, H_{2})\oo_{T}\mathbf{V}\cong(0, H_{2})\oo_{T}\mathbf{V}\cong H_{2}\oo_{B}\mathbf{U}$ is exact since $H_{2}\in \mathcal{F}_{2}$. Hence $\left(\begin{smallmatrix}  M\otimes_{B}X_{2}  \\ X_{2}  \\\end{smallmatrix}\right)$ is a Gorenstein $\mathcal{D}_{T}$-projective left $T$-module.

Next we prove that $\ccc{\mathrm{Coker}\phi^{X}}{0}{}$ is a Gorenstein $\mathcal{D}_{T}$-projective left $T$-module. There is an exact $\mathcal{C}_{2}^{\otimes}$-acyclic sequence
$$~\mathbf{L}:~\cdots\rightarrow L^{-1}\stackrel{d^{-1}}\rightarrow  L^{0}\stackrel{d^{0}}\rightarrow L^{1} \rightarrow \cdots $$
of projective left $A$-modules with $\mathrm{coker}\phi^{X}=X_{1}/\mathrm{Im}(\phi^{X})=\mathrm{Ker}(d^{0})$. Then we get the exact sequence of projective left $T$-modules
$$~\ccc{\mathbf{L}}{0}{}:~\cdots \longrightarrow
\ccc{L^{-1}}{0}{}\stackrel{\binom{d^{-1}}{0}{}}
\longrightarrow\ccc{L^{0}}{0}{}\stackrel{\binom{d^{0}}{0}}\longrightarrow
\ccc{L^{1}}{0}{}\stackrel{}\longrightarrow
\cdots$$
such that $\ccc{\mathrm{Coker}\phi^{X}}{0}{}=\mathrm{Ker}\binom{d^{0}}{0}$.

Let $(H_{1}, H_{2})_{\widetilde{\varphi_{H}}}\in\mathfrak{T}_{\mathcal{C}_{2},\mathcal{F}_{2}}$. Then there exists an exact sequence
 $$0\rightarrow \mathrm{Ker}(\widetilde{\varphi_{H}})\rightarrow H_{1}\rightarrow \Hom_{B}(M,H_{2})\rightarrow 0$$
with $H_{2}\in\mathcal{F}_{2}$ and $ \mathrm{Ker}(\widetilde{\varphi_{H}})\in\mathcal{C}_{2}$.
Since $\mathbf{L}$ is a complex consisting of projective modules, we have a short exact sequence of complexes
 $$0\rightarrow \mathrm{Ker}(\widetilde{\varphi_{H}})\oo_{A}\mathbf{L}\rightarrow H_{1}\oo_{A}\mathbf{L}\rightarrow \Hom_{B}(M,H_{2})\oo_{A}\mathbf{L}\rightarrow 0.$$
By hypotheses,  $\Hom_{B}(M,H_{2})$ has finite $\mathcal{C}_{2}$-injective dimension. It follows that  the complexes $\mathrm{Ker}(\widetilde{\varphi_{H}})\oo_{A}\mathbf{L}$ and $\Hom_{B}(M,H_{2})\oo_{A}\mathbf{L}$ are exact. Therefore, the complex $(H_{1}, H_{2})\oo_{T}\ccc{\mathbf{L}}{0}{}\cong H_{1}\oo_{A}\mathbf{L}$ is exact, so $\ccc{\mathrm{Coker}\phi^{X}}{0}{}$ is a Gorenstein $\mathcal{D}_{T}$-projective $T$-module.
\end{proof}

By \cite[Theorem A6]{br14}, for any ring $R$ and a symmetric duality pair $\mathcal{D}_{R}$, the class of Gorenstein $\mathcal{D}_{R}$-projective modules and the class of $\mathcal{D}_{R}$-Gorenstein projective modules are coincide. We have the following characterization.

\begin{corollary} \label{th3.5}Assume Setup \ref{set3.3} and let  $X=\ccc{X_{1}}{X_{2}}{\phi^{X}}$ be a left $T$-module. Suppose that $\Hom_{B}(M,D)$ has finite $\mathcal{C}_{2}$-injective dimension for each $D\in\mathcal{F}_{2}$. Then the following conditions are equivalent:

$(1)$ $X$ is a $\mathcal{D}_{T}$-Gorenstein projective left $T$-module.

$(2)$ $X_{2}$ is a $\mathcal{D}_{B}$-Gorenstein projective left $B$-module, $\mathrm{Coker}\phi^{X}$ is a $\mathcal{D}_{A}$-Gorenstein projective left $A$-module, and $\phi^{X}$ is a monomorphism.
\end{corollary}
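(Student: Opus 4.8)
The plan is to deduce Corollary \ref{th3.5} directly from Theorem \ref{the3.4} by invoking the identification of the two Gorenstein-projective classes for symmetric duality pairs. Recall that the semi-complete duality pairs $\mathcal{D}_A$, $\mathcal{D}_B$ and $\mathcal{D}_T$ are in particular symmetric (that is part of the definition of semi-complete), so by \cite[Theorem A6]{br14} we have $\mathcal{GP}_{\mathcal{D}_R}=\mathcal{GP}^{\mathcal{D}_R}$ for each $R\in\{A,B,T\}$. In other words, ``$\mathcal{D}_R$-Gorenstein projective'' and ``Gorenstein $\mathcal{D}_R$-projective'' name the same class. Once this is in hand, the statement of Corollary \ref{th3.5} is literally the statement of Theorem \ref{the3.4} with every occurrence of ``Gorenstein $\mathcal{D}_?$-projective'' replaced by the synonym ``$\mathcal{D}_?$-Gorenstein projective''.

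So the proof is essentially one sentence: by the symmetry of the duality pairs $\mathcal{D}_A$, $\mathcal{D}_B$, $\mathcal{D}_T$ and \cite[Theorem A6]{br14}, the classes $\mathcal{GP}^{\mathcal{D}_A}$, $\mathcal{GP}^{\mathcal{D}_B}$, $\mathcal{GP}^{\mathcal{D}_T}$ coincide with $\mathcal{GP}_{\mathcal{D}_A}$, $\mathcal{GP}_{\mathcal{D}_B}$, $\mathcal{GP}_{\mathcal{D}_T}$ respectively; hence conditions (1) and (2) here are equivalent to conditions (1) and (2) of Theorem \ref{the3.4}, which are equivalent to each other. I should make sure to spell out that $\mathcal{D}_T$ is symmetric — this follows because $\mathcal{D}_T$ is semi-complete by Proposition \ref{prop3.2} (using Setup \ref{set3.3}), and every semi-complete duality pair is symmetric by definition, so \cite[Theorem A6]{br14} applies to it as well.

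There is really no substantial obstacle here; the only thing to be careful about is to confirm that \cite[Theorem A6]{br14} is being applied under the correct hypothesis (symmetry of the duality pair), and that the hypotheses of Setup \ref{set3.3} together with the extra assumption on $\Hom_B(M,D)$ are exactly those of Theorem \ref{the3.4}, so that the theorem can be quoted verbatim. No new computation with column-vector $T$-modules or with the functors $\mathfrak{B}$, $\mathfrak{T}$ is needed, since all of that work was already carried out in the proof of Theorem \ref{the3.4}. The corollary is a bookkeeping restatement, and I would present it as such in two short sentences.
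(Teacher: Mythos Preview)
Your proposal is correct and matches the paper's approach exactly: the paper simply remarks (just before stating the corollary) that by \cite[Theorem A6]{br14} the classes $\mathcal{GP}_{\mathcal{D}_R}$ and $\mathcal{GP}^{\mathcal{D}_R}$ coincide for any symmetric duality pair, and then reads off the corollary from Theorem \ref{the3.4}. There is no further argument in the paper.
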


Let $R$ be any ring. The class of all flat left $R$-modules and  injective right $R$-modules is denoted  by $\mathcal{I}_{R}$  and $_{R}\mathcal{F}$, respectively.
Using results from \cite{Pre09} it is shown very succinctly in \cite[Lemmas 5.5-5.7]{CS21}, that we have a semi-complete duality pair $ (\langle _{R}\mathcal{F}\rangle, \langle \mathcal{I}_{R}\rangle)$
where $\langle _{R}\mathcal{F}\rangle$ is the definable class (meaning it is closed under products, direct
limits, and pure submodules) generated by the class of all flat left $R$-modules and $\langle \mathcal{I}_{R}\rangle$
is the definable class generated by the class of all injective right $R$-modules.

Recently, in order to prove that Gorenstein flat modules are always closed under extensions over any ring, \v{S}aroch and \v{S}\v{t}ov\'{\i}\v{c}ek \cite{js18} introduced the notion of PGF-modules. Recall that a \emph{projectively coresolved Gorenstein flat module}, or a PGF-module for short, is a syzygy module in an acyclic complex
$$\cdots\rightarrow P_{-1}\rightarrow  P_{0}\rightarrow P_{1} \rightarrow P_{2}\rightarrow\cdots $$
consisting of projective modules which remains exact after tensoring by an arbitrary injective
 left $R$-module. Denote by $\mathcal{PGF}_{R}$ the class of PGF-left $R$-modules. Let $\mathcal{D}_{A}=(\langle _{A}\mathcal{F}\rangle, \langle \mathcal{I}_{A}\rangle)$  and $\mathcal{D}_{B}=(\langle _{B}\mathcal{F}\rangle, \langle \mathcal{I}_{B}\rangle)$, respectively. By \cite[Corollary 2.12]{gj21} or
 \cite[Theorem 3.4]{js18}, an $R$-module is $\mathrm{PGF}$  if and only if it is a Gorenstein $(\langle _{R}\mathcal{F}\rangle, \langle \mathcal{I}_{R}\rangle)$-projective module.
Then we have the following result.

\begin{corollary} \label{cor3.6} Let  $X=\ccc{X_{1}}{X_{2}}{\phi^{X}}$ be a left $T$-module. Suppose that $M_{B}$ has finite injective dimension, $_{A}M$ has finite flat dimension. Then the following conditions are equivalent:

$(1)$ $X$ is a $\mathrm{PGF}$ left $T$-module.

$(2)$ $X_{2}$ is a $\mathrm{PGF}$ left $B$-module, $\mathrm{Coker}\phi^{X}$ is a $\mathrm{PGF}$ left $A$-module, and $\phi^{X}$ is a monomorphism.

Moreover, if $M_{B}$ is  finitely presented, then the above conditions are also equivalent to

$(3)$ $X$ is a Gorenstein $(\mathfrak{B}^{\langle _{A}\mathcal{F}\rangle}_{\langle _{B}\mathcal{F}\rangle}, \mathfrak{T}_{\langle \mathcal{I}_{A}\rangle,\langle \mathcal{I}_{B}\rangle})$-projective left $T$-module.

\end{corollary}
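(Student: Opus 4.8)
\textbf{Proof proposal for Corollary \ref{cor3.6}.}

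The plan is to obtain (1)$\Leftrightarrow$(2) as a direct specialization of Theorem \ref{the3.4} applied to the semi-complete duality pairs $\mathcal{D}_{A}=(\langle {}_{A}\mathcal{F}\rangle, \langle \mathcal{I}_{A}\rangle)$ and $\mathcal{D}_{B}=(\langle {}_{B}\mathcal{F}\rangle, \langle \mathcal{I}_{B}\rangle)$, together with the identification (from \cite[Corollary 2.12]{gj21} or \cite[Theorem 3.4]{js18}) of PGF modules with Gorenstein $(\langle {}_{R}\mathcal{F}\rangle,\langle \mathcal{I}_{R}\rangle)$-projective modules. So the first task is simply to check that the hypotheses of Setup \ref{set3.3} and of Theorem \ref{the3.4} are met. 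Setup \ref{set3.3}(1) holds since $(\langle {}_{A}\mathcal{F}\rangle,\langle \mathcal{I}_{A}\rangle)$ and $(\langle {}_{B}\mathcal{F}\rangle,\langle \mathcal{I}_{B}\rangle)$ are semi-complete duality pairs by \cite[Lemmas 5.5--5.7]{CS21}. For Setup \ref{set3.3}(2) and the extra hypothesis of Theorem \ref{the3.4} we must translate the two finiteness conditions in the statement: here $\mathcal{F}_{2}=\langle\mathcal{I}_{B}\rangle$ so ``$M_{B}$ has finite $\mathcal{F}_{2}$-injective dimension'' should follow from ``$M_{B}$ has finite injective dimension'' (an injective module lies in $\langle\mathcal{I}_{B}\rangle$), and ``$\Hom_{B}(M,D)$ has finite $\mathcal{C}_{2}$-injective dimension for each $D\in\langle\mathcal{I}_{B}\rangle$'', where $\mathcal{C}_{2}=\langle\mathcal{I}_{A}\rangle$, should follow from ``${}_{A}M$ has finite flat dimension''. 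The second of these is the only non-formal point.

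The key step is therefore: given that ${}_{A}M$ has finite flat dimension, say $\mathrm{fd}({}_{A}M)=n$, and given $D\in\langle\mathcal{I}_{B}\rangle$, show that $\Hom_{B}(M,D)$ has $\langle\mathcal{I}_{A}\rangle$-injective dimension at most $n$. The natural route is: a flat resolution $0\to F_{n}\to\cdots\to F_{0}\to M\to 0$ of ${}_{A}M$ by flat right... — more precisely, take a projective (hence flat) resolution of ${}_{A}M$ of length $n$; applying $\Hom_{B}(-,D)$ to it (here $M$ is an $A$-$B$-bimodule and $D$ a right $B$-module, so $\Hom_{B}(M,D)$ is a right $A$-module) yields a complex $0\to \Hom_{B}(M,D)\to\Hom_{B}(P_{0}',D)\to\cdots$, and for each projective $A^{\mathrm{op}}$-module $P'$ the module $\Hom_{B}(P',D)$ is a summand of a product of copies of $\Hom_{B}(A,D)\cong D\in\langle\mathcal{I}_{B}\rangle$; since $\langle\mathcal{I}_{A}\rangle$ for the relevant side is a definable class it is closed under products and summands, so each $\Hom_{B}(P_{i}',D)\in\langle\mathcal{I}_{A}\rangle$. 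Exactness of this complex uses that $\Tor^{A}_{j}(D,M)$... — one needs $\Hom_{B}(-,D)$ to turn the flat resolution into an exact cocomplex, which holds because $D$ is ${}^{B}\mathrm{fp}$-injective-like; cleanest is to note $D\in\langle\mathcal{I}_{B}\rangle$ is in particular a pure-injective cotorsion object so $\Hom_{B}(-,D)$ is exact on the chosen resolution, or reduce to $D$ genuinely injective via a definable-class argument. Assembling, $\Hom_{B}(M,D)$ has an $\langle\mathcal{I}_{A}\rangle$-coresolution of length $\le n$. The analogous (easier) check $\mathrm{id}$-type bound for $M_B$ is immediate.

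With the hypotheses verified, (1)$\Leftrightarrow$(2) is exactly Theorem \ref{the3.4} read through the PGF$\;=\;$Gorenstein $(\langle{}_{R}\mathcal{F}\rangle,\langle\mathcal{I}_{R}\rangle)$-projective dictionary applied over $A$, $B$ and $T$. For the ``moreover'' part, if in addition $M_{B}$ is finitely presented then Proposition \ref{prop3.2} (with $\mathcal{C}_{i}=\langle\mathcal{I}_{?}\rangle$, $\mathcal{F}_{i}=\langle\mathcal{I}_{?}\rangle$ on the appropriate sides) tells us that $\mathcal{D}_{T}=(\mathfrak{B}^{\langle{}_{A}\mathcal{F}\rangle}_{\langle{}_{B}\mathcal{F}\rangle},\mathfrak{T}_{\langle\mathcal{I}_{A}\rangle,\langle\mathcal{I}_{B}\rangle})$ is a genuine semi-complete duality pair over $T$, so ``Gorenstein $\mathcal{D}_{T}$-projective'' in the conclusion of Theorem \ref{the3.4} is literally statement (3); hence (1)$\Leftrightarrow$(3). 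I expect the main obstacle to be precisely the homological bookkeeping in the displayed key step — tracking which module is left/right over which ring when forming $\Hom_{B}(M,-)$ and ${}_{A}M\otimes-$, and justifying exactness of $\Hom_{B}(-,D)$ applied to a flat resolution for a general $D\in\langle\mathcal{I}_{B}\rangle$ rather than an honestly injective one; everything else is a bookkeeping specialization of the two results already proved.
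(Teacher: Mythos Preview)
There is a genuine gap in your route to (1)$\Leftrightarrow$(2). Theorem \ref{the3.4} does \emph{not} characterize PGF left $T$-modules: its condition (1) is ``$X$ is Gorenstein $\mathcal{D}_{T}$-projective'' for the \emph{constructed} duality pair $\mathcal{D}_{T}=(\mathfrak{B}^{\langle{}_{A}\mathcal{F}\rangle}_{\langle{}_{B}\mathcal{F}\rangle},\mathfrak{T}_{\langle\mathcal{I}_{A}\rangle,\langle\mathcal{I}_{B}\rangle})$, which is precisely statement (3) of the corollary, not statement (1). The PGF dictionary over $T$ identifies PGF $T$-modules with Gorenstein $(\langle{}_{T}\mathcal{F}\rangle,\langle\mathcal{I}_{T}\rangle)$-projectives, and nothing you have established says this intrinsic duality pair over $T$ coincides with the constructed $\mathcal{D}_{T}$. (That identification is in fact the content of the \emph{next} corollary, whose proof uses the present one.) So what your argument actually yields, once $M_{B}$ is finitely presented, is (2)$\Leftrightarrow$(3) --- which is correct and is exactly how the paper proceeds for that part --- but not (1)$\Leftrightarrow$(2). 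There is a second obstruction in the same direction: Setup \ref{set3.3}(2), and hence Theorem \ref{the3.4}, already requires $M_{B}$ finitely presented, so you cannot invoke Theorem \ref{the3.4} at all under the hypotheses of the first part of the corollary. The paper obtains (1)$\Leftrightarrow$(2) instead by citing \cite[Theorem 2.8]{wu21}, which handles the PGF characterization directly without the finitely-presented assumption.

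A secondary issue: your ``key step'' for the finite $\langle\mathcal{I}_{A}\rangle$-injective dimension of $\Hom_{B}(M,D)$ takes a flat resolution of ${}_{A}M$ and applies $\Hom_{B}(-,D)$ termwise, but the terms $F_{i}$ of a flat resolution of $M$ \emph{as a left $A$-module} carry no right $B$-structure, so $\Hom_{B}(F_{i},D)$ is undefined (and the line $\Hom_{B}(A,D)\cong D$ makes no sense for the same reason). The paper's argument avoids this by working with definable classes: using that $M_{B}$ is finitely presented and ${}_{A}M$ is flat, one checks via \cite[Lemma 2.2]{mao20} that $\Hom_{B}(M,-)$ preserves injectives, products, colimits and pure embeddings, whence $\Hom_{B}(M,\langle\mathcal{I}_{B}\rangle)\subseteq\langle\mathcal{I}_{A}\rangle$; the finite-flat-dimension hypothesis on ${}_{A}M$ then upgrades this to the required finite $\langle\mathcal{I}_{A}\rangle$-injective dimension. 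Note that this argument genuinely uses $M_{B}$ finitely presented, consistent with the fact that it is only needed for (2)$\Leftrightarrow$(3).
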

\begin{proof}The equivalence of (1) and (2) follows by \cite[Theorem 2.8]{wu21}.
If a module $N$ is  finitely presented as a right $B$-module and  flat as a left $A$-module, then by \cite[Lemma 2.2]{mao20}, one can check that the functor
$\mathrm{Hom}_{B}(N,-)$ preserves injectives, products, colimits and pure embedding. Therefore, we obtain that
$\mathrm{Hom}_{B}(N,\langle \mathcal{I}_{B}\rangle)\subseteq \langle \mathcal{I}_{A}\rangle$.
It follows that if $_{A}M$ has finite flat dimension, then the $\langle \mathcal{I}_{A}\rangle$-injective dimension of $\Hom_{B}(M,E)$ is finite for each module $E\in\langle  \mathcal{I}_{B}\rangle$. Moreover, since $M_{B}$ has finite injective dimension, $M_{B}$ has finite $\langle \mathcal{I}_{B}\rangle$-injective dimension. Therefore, all the assumptions of Setup \ref{set3.3} and Theorem \ref{the3.4} are satisfied. Thus the equivalence of (2) and (3) follows.
\end{proof}

\begin{corollary} \label{cor3.7} Suppose that $M_{B}$ is finitely presented and has finite injective dimension, $_{A}M$ has finite flat dimension. Then the duality pairs $(\mathfrak{B}^{\langle _{A}\mathcal{F}\rangle}_{\langle _{B}\mathcal{F}\rangle}, \mathfrak{T}_{\langle \mathcal{I}_{A}\rangle,\langle \mathcal{I}_{B}\rangle})$ and $(\langle _{T}\mathcal{F}\rangle, \langle\mathcal{I}_{T}\rangle)$ are coincide.
\end{corollary}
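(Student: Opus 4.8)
The plan is to reduce the claim to the equality of the two \emph{left} classes and then to verify that equality directly. By Proposition~\ref{prop3.2} (here $M_{B}$ is finitely presented) the pair $(\mathfrak{B}^{\langle _{A}\mathcal{F}\rangle}_{\langle _{B}\mathcal{F}\rangle},\mathfrak{T}_{\langle \mathcal{I}_{A}\rangle,\langle \mathcal{I}_{B}\rangle})$ is a semi-complete duality pair over $T$, and by \cite[Lemmas~5.5--5.7]{CS21} so is $(\langle _{T}\mathcal{F}\rangle,\langle\mathcal{I}_{T}\rangle)$. A semi-complete duality pair is symmetric, hence for each of them both relations $\mathcal{L}=\{L\mid L^{+}\in\mathcal{A}\}$ and $\mathcal{A}=\{N\mid N^{+}\in\mathcal{L}\}$ hold, so either class is recovered from the other. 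Thus it is enough to prove $\mathfrak{B}^{\langle _{A}\mathcal{F}\rangle}_{\langle _{B}\mathcal{F}\rangle}=\langle _{T}\mathcal{F}\rangle$ as classes of left $T$-modules.

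For the inclusion ``$\supseteq$'' I would recall that a left $T$-module $\binom{F_{1}}{F_{2}}_{\phi}$ is flat precisely when $\phi$ is a monomorphism, $F_{2}$ is a flat $B$-module and $\mathrm{Coker}\,\phi$ is a flat $A$-module; in particular every flat left $T$-module lies in $\mathfrak{B}^{\langle _{A}\mathcal{F}\rangle}_{\langle _{B}\mathcal{F}\rangle}$. Since $\langle _{T}\mathcal{F}\rangle$ is the smallest definable class containing the flat left $T$-modules, it then suffices to show $\mathfrak{B}^{\langle _{A}\mathcal{F}\rangle}_{\langle _{B}\mathcal{F}\rangle}$ is definable, i.e.\ closed under direct products, direct limits and pure submodules. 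Closure under direct limits is clear, as limits in $T\text{-Mod}$ are formed componentwise, $M\otimes_{B}-$ commutes with them, filtered colimits are exact, and $\langle _{A}\mathcal{F}\rangle,\langle _{B}\mathcal{F}\rangle$ are definable. Closure under products uses that $M_{B}$ is finitely presented, so $M\otimes_{B}\prod_{i}X_{2}^{i}\cong\prod_{i}(M\otimes_{B}X_{2}^{i})$; the monomorphism and cokernel conditions then pass to the product by exactness of $\prod$. Finally, closure under pure submodules is automatic, because $\mathfrak{B}^{\langle _{A}\mathcal{F}\rangle}_{\langle _{B}\mathcal{F}\rangle}$ is the left class of a duality pair and such a class is always closed under pure submodules.

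For the inclusion ``$\subseteq$'' I would take $X=\binom{X_{1}}{X_{2}}_{\phi^{X}}\in\mathfrak{B}^{\langle _{A}\mathcal{F}\rangle}_{\langle _{B}\mathcal{F}\rangle}$ and use the short exact sequence
$$0\longrightarrow\binom{M\otimes_{B}X_{2}}{X_{2}}_{\mathrm{id}}\longrightarrow\binom{X_{1}}{X_{2}}_{\phi^{X}}\longrightarrow\binom{\mathrm{Coker}\,\phi^{X}}{0}_{0}\longrightarrow 0$$
already exploited in the proof of Theorem~\ref{the3.4}. The term $\binom{\mathrm{Coker}\,\phi^{X}}{0}_{0}$ lies in $\langle _{T}\mathcal{F}\rangle$: the functor $C\mapsto\binom{C}{0}_{0}$ from $A\text{-Mod}$ is exact, commutes with products and colimits, preserves purity, and carries flat $A$-modules to flat $T$-modules, hence sends $\langle _{A}\mathcal{F}\rangle$ into $\langle _{T}\mathcal{F}\rangle$. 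The other term is $\binom{M\otimes_{B}X_{2}}{X_{2}}_{\mathrm{id}}=\binom{M}{B}\otimes_{B}X_{2}$, where $\binom{M}{B}$ is a projective left $T$-module which is finitely presented as a right $B$-module; thus $\binom{M}{B}\otimes_{B}-$ sends flat $B$-modules to flat $T$-modules and commutes with products and colimits, but its behaviour on pure embeddings---and so the membership of $\binom{M\otimes_{B}X_{2}}{X_{2}}$ in $\langle _{T}\mathcal{F}\rangle$---is delicate when $M_{B}$ fails to be flat.

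This last point, together with the fact that the displayed sequence need not be pure (its $A$-component is exact after tensoring only to the extent that $\mathrm{Coker}\,\phi^{X}$ is flat, whereas it is only known to lie in $\langle _{A}\mathcal{F}\rangle$) and a definable class need not be closed under extensions, is the main obstacle; it is exactly where the remaining hypotheses enter. Using that $\binom{M}{B}\otimes_{B}-$ and the relevant comparison maps do preserve purity once the resolving modules are flat, I would run a finite induction along a flat resolution of $_{A}M$ and an injective resolution of $M_{B}$---both finite by hypothesis---to replace the structural sequence by pure-exact ones built from flat $T$-modules, and then conclude via closure of $\langle _{T}\mathcal{F}\rangle$ (a left class of a duality pair) under pure submodules and pure extensions. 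Equivalently, applying $(-)^{+}$ turns the goal into $\mathfrak{T}_{\langle \mathcal{I}_{A}\rangle,\langle \mathcal{I}_{B}\rangle}\subseteq\langle\mathcal{I}_{T}\rangle$, which can be handled symmetrically through the sequence $0\to(K,0)\to(X,Y)\to(\Hom_{B}(M,Y),Y)\to 0$ and the right adjoint $Y\mapsto(\Hom_{B}(M,Y),Y)$ of the exact functor $(X,Y)\mapsto Y$; the bookkeeping needed to control purity there is again governed by the finiteness of the injective dimension of $M_{B}$ and of the flat dimension of $_{A}M$, in the same spirit as the proof of Theorem~\ref{the3.4}.
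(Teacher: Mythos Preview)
Your route is genuinely different from the paper's, and the obstacle you identify in the ``$\subseteq$'' direction is real and not dispatched by the inductive sketch you propose. The paper never attempts to compare the two definable classes directly. Instead it invokes Corollary~\ref{cor3.6}: under the standing hypotheses the Gorenstein $(\mathfrak{B}^{\langle _{A}\mathcal{F}\rangle}_{\langle _{B}\mathcal{F}\rangle}, \mathfrak{T}_{\langle \mathcal{I}_{A}\rangle,\langle \mathcal{I}_{B}\rangle})$-projective left $T$-modules coincide with the PGF left $T$-modules, which by \cite[Corollary~2.12]{gj21} are precisely the Gorenstein $(\langle _{T}\mathcal{F}\rangle,\langle\mathcal{I}_{T}\rangle)$-projectives. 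From the equality $\mathcal{GP}_{\mathcal{D}_{1}}=\mathcal{GP}_{\mathcal{D}_{2}}$ of the associated Gorenstein projective classes the paper then reads off $\mathfrak{T}_{\langle \mathcal{I}_{A}\rangle,\langle \mathcal{I}_{B}\rangle}=\langle\mathcal{I}_{T}\rangle$, whence the two duality pairs agree. Thus all of the work is already packaged in Theorem~\ref{the3.4}/Corollary~\ref{cor3.6}, and no analysis of purity of the structural sequence is needed.

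Your ``$\supseteq$'' half is correct and self-contained; it gives an independent proof that $\mathfrak{B}^{\langle _{A}\mathcal{F}\rangle}_{\langle _{B}\mathcal{F}\rangle}$ is definable and contains $_{T}\mathcal{F}$, hence contains $\langle _{T}\mathcal{F}\rangle$. But the ``$\subseteq$'' half does not close as written. The displayed short exact sequence is pure in its $A$-component only when $\mathrm{Coker}\,\phi^{X}$ is flat, not merely when it lies in $\langle _{A}\mathcal{F}\rangle$, so a finite induction along a flat resolution of $_{A}M$ or an injective resolution of $M_{B}$ does not convert it into a pure sequence with flat outer terms; and since $\langle _{T}\mathcal{F}\rangle$ is not known to be closed under extensions, knowing that the two ends lie in $\langle _{T}\mathcal{F}\rangle$ would still not place the middle there. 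The dual formulation via $\mathfrak{T}_{\langle \mathcal{I}_{A}\rangle,\langle \mathcal{I}_{B}\rangle}\subseteq\langle\mathcal{I}_{T}\rangle$ faces the same issue. The cleanest fix is simply to import the conclusion of Corollary~\ref{cor3.6} as the paper does.
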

\begin{proof}By \cite[Corollary 2.12]{gj21}, a $T$-module is $\mathrm{PGF}$  if and only if it is  Gorenstein-$(\langle _{T}\mathcal{F}\rangle, \langle\mathcal{I}_{T}\rangle)$ projective. Therefore,
Corollary \ref{cor3.6} tells us that the class of Gorenstein $(\mathfrak{B}^{\langle _{A}\mathcal{F}\rangle}_{\langle _{B}\mathcal{F}\rangle}, \mathfrak{T}_{\langle \mathcal{I}_{A}\rangle,\langle \mathcal{I}_{B}\rangle})$-projective modules and the class of Gorenstein-$(\langle _{T}\mathcal{F}\rangle, \langle\mathcal{I}_{T}\rangle)$ projective modules are the same.
Consider the $\mathrm{PGF}$ module $T$. It is a syzygy module
in complex $\cdots\rightarrow 0\rightarrow T \rightarrow T \rightarrow 0\rightarrow\cdots $
which remains exact after applying functor $X\otimes_{T}-$ and $Y\otimes_{T}-$ for any $X\in \mathfrak{T}_{\langle \mathcal{I}_{A}\rangle,\langle \mathcal{I}_{B}\rangle}$ and $Y\in \langle\mathcal{I}_{T}\rangle$. This implies that $\mathfrak{T}_{\langle \mathcal{I}_{A}\rangle,\langle \mathcal{I}_{B}\rangle}=\langle\mathcal{I}_{T}\rangle$. Thus these two duality pairs are the same.
\end{proof}

\bigskip
\subsection{Gorenstein $\mathcal{D}_{T}$-injective  modules}

\begin{theorem} \label{the3.7} Assume Setup \ref{set3.3} and let  $X=({X_{1}},{X_{2}})_{\widetilde{\varphi^{X}}}$ be a right $T$-module.  Suppose that $\Hom_{B}(M,D)$ has finite $\mathcal{C}_{2}$-injective dimension for each $D\in\mathcal{F}_{2}$. Then the following conditions are equivalent:

$(1)$ $X$ is a Gorenstein $\mathcal{D}_{T}$-injective right $T$-module.

$(2)$ $X_{2}$ is a Gorenstein $\mathcal{D}_{A}$-injective right $B$-module, $\mathrm{Ker}(\widetilde{\varphi^{X}})$ is a Gorenstein $\mathcal{D}_{B}$-injective right $A$-module, and $\widetilde{\varphi^{X}}$ is an epimorphism.
\end{theorem}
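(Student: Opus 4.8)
The statement is the formal dual of Theorem \ref{the3.4}, and the plan is to run that argument through the dictionary ``left $\leftrightarrow$ right, projective $\leftrightarrow$ injective, $\otimes \leftrightarrow \Hom$, $\phi$ monic $\leftrightarrow$ $\widetilde\varphi$ epic, $\mathrm{Coker}\,\phi \leftrightarrow \mathrm{Ker}\,\widetilde\varphi$''. The structural ingredients I will use are: (i) the injective analogue of \cite[Theorem 3.1]{ah00}, namely that a right $T$-module $\ccr{I_1}{I_2}{\widetilde{\varphi^I}}$ is injective iff $I_2$ is an injective right $B$-module, $\widetilde{\varphi^I}$ is epic, and $\mathrm{Ker}\,\widetilde{\varphi^I}$ is an injective right $A$-module; (ii) the natural isomorphisms $\Hom_T(\ccr{G}{0}{0},\ccr{I_1}{I_2}{\widetilde{\varphi^I}})\cong\Hom_A(G,\mathrm{Ker}\,\widetilde{\varphi^I})$ and $\Hom_T(\ccr{0}{D}{0},\ccr{I_1}{I_2}{\widetilde{\varphi^I}})\cong\Hom_B(D,I_2)$, together with the coinduced right $T$-module $\ccr{\Hom_B(M,D)}{D}{\mathrm{id}}$ satisfying $\Hom_T(Y,\ccr{\Hom_B(M,D)}{D}{\mathrm{id}})\cong\Hom_B(Y_2,D)$; (iii) the short exact sequences of right $T$-modules $0\to\ccr{0}{D}{0}\to\ccr{\Hom_B(M,D)}{D}{\mathrm{id}}\to\ccr{\Hom_B(M,D)}{0}{0}\to 0$ for $D\in\mathcal F_2$ and $0\to\ccr{\mathrm{Ker}\,\widetilde\psi}{0}{0}\to\ccr{N_1}{N_2}{\widetilde\psi}\to\ccr{\Hom_B(M,N_2)}{N_2}{\mathrm{id}}\to 0$ for $\ccr{N_1}{N_2}{\widetilde\psi}\in\mathfrak T_{\mathcal C_2,\mathcal F_2}$; and (iv) the facts from Setup \ref{set3.3} and the hypothesis that $M_B$ (resp. $\Hom_B(M,D)$ for $D\in\mathcal F_2$) has finite $\mathcal F_2$- (resp. $\mathcal C_2$-) injective dimension, so exactness of $\Hom$-complexes can be shifted past these modules.

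For $(1)\Rightarrow(2)$ I would start from an exact, $\mathfrak T_{\mathcal C_2,\mathcal F_2}$-acyclic complex $\mathbf I$ of injective right $T$-modules with $X=Z^0\mathbf I$, writing $I^n=\ccr{I_1^n}{I_2^n}{\widetilde{\varphi^n}}$. Applying the exact functor $\ccr{Y_1}{Y_2}{}\mapsto Y_2$ gives an exact complex $\mathbf I_2$ of injective right $B$-modules with $X_2=Z^0\mathbf I_2$; testing the first sequence in (iii) against $\mathbf I$ (its middle term lies in $\mathfrak T_{\mathcal C_2,\mathcal F_2}$, its right term has finite $\mathfrak T_{\mathcal C_2,\mathcal F_2}$-injective dimension by (iv)), together with $\Hom_T(\ccr{0}{D}{0},\mathbf I)\cong\Hom_B(D,\mathbf I_2)$, shows $\mathbf I_2$ is $\mathcal F_2$-acyclic, i.e. $X_2$ is Gorenstein $\mathcal D_B$-injective. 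Since $M_B$ has finite $\mathcal F_2$-injective dimension, $\Hom_B(M,\mathbf I_2)$ is then exact, and feeding this into the sequences $0\to\mathrm{Ker}\,\widetilde{\varphi^n}\to I_1^n\to\Hom_B(M,I_2^n)\to 0$ yields an exact complex $\underline{\mathbf I_1}:=(\mathrm{Ker}\,\widetilde{\varphi^n})_{n}$ of injective right $A$-modules with $Z^0\underline{\mathbf I_1}=\mathrm{Ker}\,\widetilde{\varphi^X}$; the isomorphism $\Hom_T(\ccr{G}{0}{0},\mathbf I)\cong\Hom_A(G,\underline{\mathbf I_1})$ for $G\in\mathcal C_2$ then makes $\underline{\mathbf I_1}$ $\mathcal C_2$-acyclic, so $\mathrm{Ker}\,\widetilde{\varphi^X}$ is Gorenstein $\mathcal D_A$-injective. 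Finally, choosing an epimorphism $I^{-1}\twoheadrightarrow X$ (possible since $X=Z^0\mathbf I=\mathrm{Im}\,\partial^{-1}$) and comparing involution maps in the resulting commutative square shows $\widetilde{\varphi^X}$ is a composite of two epimorphisms — exactness of $\Hom_B(M,\mathbf I_2)$ is exactly what forces the induced map $\Hom_B(M,\pi_2)$ to be surjective — hence $\widetilde{\varphi^X}$ is epic.

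For $(2)\Rightarrow(1)$, the epimorphism $\widetilde{\varphi^X}$ produces a short exact sequence $0\to\ccr{\mathrm{Ker}\,\widetilde{\varphi^X}}{0}{0}\to X\to\ccr{\Hom_B(M,X_2)}{X_2}{\mathrm{id}}\to 0$ of right $T$-modules; since the class of Gorenstein $\mathcal D_T$-injective right $T$-modules is closed under extensions (\cite[Lemma 3.5]{gj21} or its dual), it suffices to show the outer terms lie in this class. Given an exact, $\mathcal F_2$-acyclic complex $\mathbf E$ of injective right $B$-modules with $X_2=Z^0\mathbf E$, the complex $(\Hom_B(M,\mathbf E),\mathbf E)$ (identity involutions) is an exact complex of injective right $T$-modules with $Z^0=\ccr{\Hom_B(M,X_2)}{X_2}{\mathrm{id}}$ (here $\Hom_B(M,\mathbf E)$ is exact by (iv)), and for $\ccr{N_1}{N_2}{\widetilde\psi}\in\mathfrak T_{\mathcal C_2,\mathcal F_2}$ we have $\Hom_T(\ccr{N_1}{N_2}{\widetilde\psi},(\Hom_B(M,\mathbf E),\mathbf E))\cong\Hom_B(N_2,\mathbf E)$, which is exact as $N_2\in\mathcal F_2$; thus $\ccr{\Hom_B(M,X_2)}{X_2}{\mathrm{id}}$ is Gorenstein $\mathcal D_T$-injective. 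Similarly, given an exact, $\mathcal C_2$-acyclic complex $\mathbf F$ of injective right $A$-modules with $\mathrm{Ker}\,\widetilde{\varphi^X}=Z^0\mathbf F$, the complex $(\mathbf F,0)$ is an exact complex of injective right $T$-modules with $Z^0=\ccr{\mathrm{Ker}\,\widetilde{\varphi^X}}{0}{0}$, and $\Hom_T(\ccr{N_1}{N_2}{\widetilde\psi},(\mathbf F,0))\cong\Hom_A(N_1,\mathbf F)$; this is exact because the second sequence in (iii) exhibits $N_1$ as an extension of $\mathrm{Ker}\,\widetilde\psi\in\mathcal C_2$ by $\Hom_B(M,N_2)$, which has finite $\mathcal C_2$-injective dimension, so dimension shifting applies. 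Hence $\ccr{\mathrm{Ker}\,\widetilde{\varphi^X}}{0}{0}$ is Gorenstein $\mathcal D_T$-injective and so, by extension-closure, is $X$.

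The step I expect to be most delicate is the dimension-shifting bookkeeping: at each stage one must certify that a specific $\Hom$-complex is exact, and this rests on combining the two finiteness hypotheses (finite $\mathcal F_2$-injective dimension of $M_B$, finite $\mathcal C_2$-injective dimension of $\Hom_B(M,D)$ for $D\in\mathcal F_2$) with the observation that, for any $N\in\mathfrak T_{\mathcal C_2,\mathcal F_2}$, the $A$-component $N_1$ automatically has finite $\mathcal C_2$-injective dimension. A secondary point requiring care is establishing ingredient (i) and the naturality of the $\Hom$-isomorphisms in (ii), i.e. pinning down the structure theory of injective right $T$-modules and of the coinduction functor $\mathrm{Mod}\text{-}B\to\mathrm{Mod}\text{-}T$.
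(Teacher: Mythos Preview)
Your proposal is correct and is exactly the approach the paper takes: the paper's proof simply cites \cite[Proposition 5.1]{ah99} for the structure of injective right $T$-modules and then states that ``the proof is dual to that of Theorem \ref{the3.4}'', which is precisely the dictionary-based dualization you have spelled out in detail. Your identification of the delicate dimension-shifting steps and the role of the coinduction functor $D\mapsto\ccr{\Hom_B(M,D)}{D}{\mathrm{id}}$ mirrors the corresponding tensor/induction arguments in the proof of Theorem \ref{the3.4}.
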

\begin{proof} By \cite[Proposition 5.1]{ah99}, a right $T$-module $X=({X_{1}},{X_{2}})_{\widetilde{\varphi^{X}}}$ is injective if and only if $X_{2}$ is an injective right $B$-module, $\mathrm{Ker}(\widetilde{\varphi^{X}})$ is an  injective right $A$-module, and $\widetilde{\varphi^{X}}$ is an epimorphism. Then the proof is dual to that of Theorem \ref{the3.4}.
\end{proof}

Recall that a left $R$-module $X$ is \emph{FP-injective} or \emph{absolutely pure} if
$\mathrm{Ext}^{1}_{R}(N,X)=0$ for every finitely presented left $R$-module $N$.  Let $\mathcal{FI}_{R}$ be the class of all absolutely pure modules.  As in \cite{gi10}, an right $R$-module $M$ \emph{Ding injective} if there exists an exact
complex of injectives
$$\cdots\rightarrow I_{1} \rightarrow I_{0} \rightarrow I^{0} \rightarrow I^{1} \rightarrow \cdots$$
with $M = \mathrm{Ker} (I^{0}\rightarrow I^{1})$ and which remains exact after applying $\Hom_{R}(E,-)$ for any absolutely pure module $E$. Denote by $\mathcal{DI}_{R}$ the class of all Ding injective right $R$-modules.  Let $\mathcal{D}_{A}=(\langle _{A}\mathcal{F}\rangle, \langle \mathcal{I}_{A}\rangle)$  and $\mathcal{D}_{B}=(\langle _{B}\mathcal{F}\rangle, \langle \mathcal{I}_{B}\rangle)$, respectively.
 By \cite[Proposition 2.11]{gj21}, an $R$-module is Ding injective  if and only if it is a Gorenstein $(\langle _{R}\mathcal{F}\rangle, \langle \mathcal{I}_{R}\rangle)$-injective module.
Then we have the following result. One can compare it with \cite[Theorem 4.4]{mao22}.

\begin{corollary} \label{cor3.8}Let  $X=({X_{1}},{X_{2}})_{\widetilde{\varphi^{X}}}$ be a right $T$-module. Suppose that $M_{B}$ is  finitely presented and has finite FP-injective dimension, $_{A}M$ has finite flat dimension. Then the following conditions are equivalent:

$(1)$ $X$ is a Gorenstein $(\mathfrak{B}^{\langle _{A}\mathcal{F}\rangle}_{\langle _{B}\mathcal{F}\rangle}, \mathfrak{T}_{\langle \mathcal{I}_{A}\rangle,\langle \mathcal{I}_{B}\rangle})$-injective right $T$-module.

$(2)$ $X_{2}$ is a Ding injective right $B$-module, $\mathrm{Ker}(\widetilde{\varphi^{X}})$ is a Ding injective right $A$-module, and $\widetilde{\varphi^{X}}$ is an epimorphism.

Moreover, if $M_{B}$ has finite injective dimension, then the above conditions are also equivalent to

$(3)$ $X$ is a Ding injective right $T$-module.
\end{corollary}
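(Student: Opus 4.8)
The plan is to deduce $(1)\Leftrightarrow(2)$ from Theorem~\ref{the3.7} together with the identification of Ding injective modules as Gorenstein $(\langle{}_{R}\mathcal{F}\rangle,\langle\mathcal{I}_{R}\rangle)$-injective modules from \cite[Proposition 2.11]{gj21}, and to obtain $(1)\Leftrightarrow(3)$ by comparing duality pairs via Corollary~\ref{cor3.7}. So the first task is to check that, under the present hypotheses on $M$, the pairs $\mathcal{D}_{A}=(\langle{}_{A}\mathcal{F}\rangle,\langle\mathcal{I}_{A}\rangle)$ and $\mathcal{D}_{B}=(\langle{}_{B}\mathcal{F}\rangle,\langle\mathcal{I}_{B}\rangle)$ satisfy all the requirements of Setup~\ref{set3.3} and the extra hypothesis of Theorem~\ref{the3.7}. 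Both pairs are semi-complete duality pairs by \cite[Lemmas 5.5--5.7]{CS21}. For Setup~\ref{set3.3}(2) I would observe that every absolutely pure (FP-injective) right $B$-module is pure in its injective envelope, hence lies in the definable class $\langle\mathcal{I}_{B}\rangle$; therefore the assumption that $M_{B}$ has finite FP-injective dimension forces $M_{B}$ to have finite $\langle\mathcal{I}_{B}\rangle$-injective dimension, exactly as required. For the extra hypothesis of Theorem~\ref{the3.7} -- that $\Hom_{B}(M,D)$ has finite $\langle\mathcal{I}_{A}\rangle$-injective dimension for each $D\in\langle\mathcal{I}_{B}\rangle$ -- I would repeat the argument from the proof of Corollary~\ref{cor3.6}: since a module finitely presented over $B$ on the right and flat over $A$ on the left makes $\Hom_{B}(N,-)$ preserve injectives, products, colimits and pure embeddings (\cite[Lemma 2.2]{mao20}), one gets $\Hom_{B}(N,\langle\mathcal{I}_{B}\rangle)\subseteq\langle\mathcal{I}_{A}\rangle$, and a dimension shift along a flat resolution of ${}_{A}M$ then yields the required finiteness. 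Thus all hypotheses of Theorem~\ref{the3.7} are in force.

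With these verifications in hand, Theorem~\ref{the3.7} gives directly that $X=(X_{1},X_{2})_{\widetilde{\varphi^{X}}}$ is Gorenstein $(\mathfrak{B}^{\langle{}_{A}\mathcal{F}\rangle}_{\langle{}_{B}\mathcal{F}\rangle},\mathfrak{T}_{\langle\mathcal{I}_{A}\rangle,\langle\mathcal{I}_{B}\rangle})$-injective if and only if $X_{2}$ is Gorenstein $\mathcal{D}_{B}$-injective over $B$, $\mathrm{Ker}(\widetilde{\varphi^{X}})$ is Gorenstein $\mathcal{D}_{A}$-injective over $A$, and $\widetilde{\varphi^{X}}$ is an epimorphism. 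I would then invoke \cite[Proposition 2.11]{gj21} twice, over $A$ and over $B$, to rewrite "Gorenstein $\mathcal{D}_{A}$-injective" and "Gorenstein $\mathcal{D}_{B}$-injective" as "Ding injective"; this turns the above condition into condition $(2)$ of the corollary, establishing $(1)\Leftrightarrow(2)$.

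For the last equivalence I would add the hypothesis that $M_{B}$ has finite injective dimension. Then $M_{B}$ is finitely presented of finite injective dimension and ${}_{A}M$ has finite flat dimension, so Corollary~\ref{cor3.7} applies and gives that the duality pairs $(\mathfrak{B}^{\langle{}_{A}\mathcal{F}\rangle}_{\langle{}_{B}\mathcal{F}\rangle},\mathfrak{T}_{\langle\mathcal{I}_{A}\rangle,\langle\mathcal{I}_{B}\rangle})$ and $(\langle{}_{T}\mathcal{F}\rangle,\langle\mathcal{I}_{T}\rangle)$ coincide; in particular $\mathfrak{T}_{\langle\mathcal{I}_{A}\rangle,\langle\mathcal{I}_{B}\rangle}=\langle\mathcal{I}_{T}\rangle$. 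Since, by Definition~\ref{def2.4}(3), the class of Gorenstein $(\mathcal{L},\mathcal{A})$-injective modules depends only on the second component $\mathcal{A}$, being Gorenstein $(\mathfrak{B}^{\langle{}_{A}\mathcal{F}\rangle}_{\langle{}_{B}\mathcal{F}\rangle},\mathfrak{T}_{\langle\mathcal{I}_{A}\rangle,\langle\mathcal{I}_{B}\rangle})$-injective is then the same as being Gorenstein $(\langle{}_{T}\mathcal{F}\rangle,\langle\mathcal{I}_{T}\rangle)$-injective, and the latter is equivalent to being Ding injective over $T$ by \cite[Proposition 2.11]{gj21}. This yields $(1)\Leftrightarrow(3)$.

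The main obstacle is the dimension-shifting step in the first paragraph: reducing the hypothesis "$\Hom_{B}(M,D)$ of finite $\langle\mathcal{I}_{A}\rangle$-injective dimension for $D\in\langle\mathcal{I}_{B}\rangle$" from the flat case (where $\Hom_{B}(M,-)$ maps $\langle\mathcal{I}_{B}\rangle$ into $\langle\mathcal{I}_{A}\rangle$ on the nose) to the case of finite flat dimension, while keeping the resolution compatible with the bimodule structure; this is precisely where the assumptions "${}_{A}M$ of finite flat dimension" and "$M_{B}$ finitely presented" get consumed, and it is already handled in the proof of Corollary~\ref{cor3.6}, so here it is only a matter of citing it. Everything else is a translation exercise between the $\mathcal{D}_{\bullet}$-terminology and the Ding-injective / definable-class terminology via \cite{gj21} and \cite{CS21}.
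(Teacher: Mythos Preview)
Your proposal is correct and follows essentially the same approach as the paper: verify Setup~\ref{set3.3} and the extra hypothesis of Theorem~\ref{the3.7} for the pairs $(\langle{}_{A}\mathcal{F}\rangle,\langle\mathcal{I}_{A}\rangle)$ and $(\langle{}_{B}\mathcal{F}\rangle,\langle\mathcal{I}_{B}\rangle)$ using $\mathcal{FI}_{B}\subseteq\langle\mathcal{I}_{B}\rangle$ and the argument from Corollary~\ref{cor3.6}, then invoke \cite[Proposition 2.11]{gj21} and Corollary~\ref{cor3.7}. Your write-up is in fact more explicit than the paper's in two places: you justify $\mathcal{FI}_{B}\subseteq\langle\mathcal{I}_{B}\rangle$ via absolute purity, and you spell out why coincidence of duality pairs in Corollary~\ref{cor3.7} transfers to coincidence of Gorenstein injective classes (since Definition~\ref{def2.4}(3) depends only on the second component).
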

\begin{proof}
By the proof of Corollary \ref{cor4.4}, we know that if $_{A}M$ has finite flat dimension, then the $\langle \mathcal{I}_{A}\rangle$-injective dimension of $\Hom_{B}(M,E)$ is finite for each module $E\in \langle \mathcal{I}_{B}\rangle$.
Moreover, since $M_{B}$ has finite FP-injective dimension and $\mathcal{FI}_{B}\subseteq \langle \mathcal{I}_{B}\rangle$, $M_{B}$ has finite $\langle \mathcal{I}_{B}\rangle$-injective dimension.
Then all the assumptions of Setup \ref{set3.3} and Theorem \ref{the3.7} are satisfied. Thus the equivalence of (1) and (2) follows. Moreover, if $M_{B}$ has finite injective dimension, then the equivalence of (2) and (3) follows by
Corollary \ref{cor3.7}. \end{proof}

\bigskip

\subsection{Gorenstein $\mathcal{D}_{T}$-flat modules}

\begin{lemma}\label{lem3.11} \cite[Proposition 4.6 or  Corollary 5.3]{gj21} If $\mathcal{D}_{R}=\mathcal{(L,A)}$ is a semi-complete duality  pair, then the class of all Gorenstein $\mathcal{D}_{R}$-flat modules is closed under extensions.
\end{lemma}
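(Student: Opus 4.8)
The lemma is the duality-pair incarnation of the theorem of \v{S}aroch and \v{S}\v{t}ov\'{\i}\v{c}ek that ordinary Gorenstein flat modules are closed under extensions, and the plan is to adapt their argument, the semi-completeness of $\mathcal{D}_{R}=(\mathcal{L},\mathcal{A})$ being exactly what allows it to be run relative to the class $\mathcal{A}^{\oo}$. It is convenient to aim at the slightly stronger statement that $\mathcal{GF}_{\mathcal{D}_{R}}$ is the left-hand class ${}^{\perp}\mathcal{Y}$ of a cotorsion pair in $R$-Mod (equivalently, that it is closed under transfinite extensions and direct summands): once this is known, closure under extensions is immediate, since from $0\to N'\to N\to N''\to 0$ with $N',N''\in{}^{\perp}\mathcal{Y}$ the long exact sequence gives $\Ext^{i}_{R}(N,Y)=0$ for all $i\geq1$ and all $Y\in\mathcal{Y}$. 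This is also why the statement can be read off the Gorenstein $\mathcal{D}_{R}$-flat model structure, which is the route behind \cite[Corollary 5.3]{gj21}.

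A first, soft ingredient is a character-module translation that uses the symmetry of $\mathcal{D}_{R}$. For a complex $\mathbf{F}$ of flat left $R$-modules and $A\in\mathcal{A}$ one has $(A\oo_{R}\mathbf{F})^{+}\cong\Hom_{R}(A,\mathbf{F}^{+})$, and $(-)^{+}$ is faithfully exact; hence $\mathbf{F}$ is $\mathcal{A}^{\oo}$-acyclic if and only if $\Hom_{R}(A,\mathbf{F}^{+})$ is exact for every $A\in\mathcal{A}$, and since a character of a flat module is injective this shows that $N^{+}$ is Gorenstein $\mathcal{D}_{R}$-injective whenever $N=Z^{0}\mathbf{F}$ is Gorenstein $\mathcal{D}_{R}$-flat. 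Using this, together with dimension-shifting along a complete flat resolution and the closure properties of $\mathcal{A}$, one obtains the Ext-vanishing $\Ext^{\geq1}_{R}(\mathcal{GF}_{\mathcal{D}_{R}},W)=0$ for every cotorsion module $W$ of finite flat dimension, and one checks that $\mathcal{GF}_{\mathcal{D}_{R}}$ contains the flats and is closed under direct limits, pure submodules and direct summands. None of this yet gives extension-closure, but it is the homological input needed afterwards.

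The serious ingredient is a \emph{local} description of $\mathcal{GF}_{\mathcal{D}_{R}}$: one must show that a module $N$ is Gorenstein $\mathcal{D}_{R}$-flat precisely when it is filtered by a continuous chain of submodules whose successive quotients are again Gorenstein $\mathcal{D}_{R}$-flat and of cardinality bounded in terms of $|R|$. Granted this, closure under extensions follows formally: given $0\to N'\to N\to N''\to 0$ with $N',N''\in\mathcal{GF}_{\mathcal{D}_{R}}$, one pulls back a filtration of $N''$ to $N$ and refines the bottom piece using a filtration of $N'$, exhibiting $N$ as filtered by small Gorenstein $\mathcal{D}_{R}$-flat modules, hence Gorenstein $\mathcal{D}_{R}$-flat; passing to a representative set $\mathcal{S}$ of such small modules and invoking the Eklof--Trlifaj theorem produces the complete cotorsion pair $({}^{\perp}(\mathcal{S}^{\perp}),\mathcal{S}^{\perp})$, which the Ext-vanishing of the previous step identifies with $\mathcal{GF}_{\mathcal{D}_{R}}$. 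The main obstacle is precisely this local characterization: it cannot be obtained by homological manipulation and instead requires a Hill-lemma and singular-compactness argument in the spirit of \cite{js18}, showing that the defining property of being $Z^{0}$ of an $\mathcal{A}^{\oo}$-acyclic flat complex can be witnessed on a directed system of small submodules; this is the one place where the full strength of ``$\mathcal{D}_{R}$ is semi-complete'' (symmetric and semi-perfect) is genuinely used.
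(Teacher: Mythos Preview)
The paper does not give its own proof of this lemma: it is stated with a bare citation to \cite[Proposition 4.6 or Corollary 5.3]{gj21} and then used as a black box. So there is no in-paper argument to compare against; what you have written is a proof outline that the paper deliberately outsources.

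As a sketch your outline is in the right spirit and correctly identifies the template from \cite{js18}: the character-module transfer from $\mathcal{GF}_{\mathcal{D}_{R}}$ to $\mathcal{GI}_{\mathcal{D}_{R}}$ via symmetry, the soft closure properties of $\mathcal{GF}_{\mathcal{D}_{R}}$, and then the hard step of a deconstructibility/filtration statement proved by a Hill-lemma and singular-compactness argument, from which the cotorsion pair (and hence extension-closure) follows by Eklof--Trlifaj. That is indeed the architecture behind \cite[Proposition 4.6]{gj21}, and the model-structure route you mention is exactly the content of \cite[Corollary 5.3]{gj21}. Two small cautions: first, your intermediate Ext-vanishing claim ``$\Ext^{\geq1}_{R}(\mathcal{GF}_{\mathcal{D}_{R}},W)=0$ for every cotorsion module $W$ of finite flat dimension'' is not what is actually needed or used, and as stated it would require knowing extension-closure already to dimension-shift cleanly; the relevant right-hand class is $\mathcal{GF}_{\mathcal{D}_{R}}^{\perp}$, identified only after the deconstructibility step. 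Second, you have (appropriately) flagged but not carried out the singular-compactness argument, which is the entire content of the lemma; so what you have is a correct roadmap rather than a proof, matching the paper's decision simply to cite \cite{gj21}.
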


\begin{theorem} \label{the3.8} Assume Setup \ref{set3.3} and let  $X=\ccc{X_{1}}{X_{2}}{\phi^{X}}$ be a left $T$-module. Suppose that $\Hom_{B}(M,D)$ has finite $\mathcal{C}_{2}$-injective dimension for each $D\in\mathcal{F}_{2}$. Then the following conditions are equivalent:

$(1)$ $X$ is a Gorenstein $\mathcal{D}_{T}$-flat left $T$-module.

$(2)$ $X_{2}$ is a Gorenstein $\mathcal{D}_{B}$-flat left $B$-module, $\mathrm{Coker}\phi^{X}$ is a Gorenstein $\mathcal{D}_{A}$-flat left $A$-module, and $\phi^{X}$ is a monomorphism.
\end{theorem}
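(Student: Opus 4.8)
The plan is to follow the proof of Theorem~\ref{the3.4} essentially verbatim, replacing ``projective'' by ``flat'' throughout. The one structural input that must be swapped in is the flat analogue of \cite[Theorem~3.1]{ah00}: a left $T$-module $\binom{F_{1}}{F_{2}}_{\phi}$ is flat if and only if $F_{2}$ is a flat left $B$-module, $\phi$ is a monomorphism, and $\mathrm{Coker}\phi$ is a flat left $A$-module (this is classical for triangular matrix rings; see, e.g., \cite{zhu16, mao20}). The only other point at which the argument diverges from the projective case is in the ``if'' direction, where closure of the class of Gorenstein $\mathcal{D}_{T}$-flat modules under extensions will be supplied by Lemma~\ref{lem3.11} applied to $\mathcal{D}_{T}$ (which is a semi-complete duality pair by Proposition~\ref{prop3.2}), in place of the dual of \cite[Lemma~3.5]{gj21} used before.

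For $(1)\Rightarrow(2)$, I would begin with an exact, $\mathfrak{T}_{\mathcal{C}_{2},\mathcal{F}_{2}}^{\otimes}$-acyclic complex $\mathbf{F}$ of flat left $T$-modules with $X=Z^{0}\mathbf{F}$, write $\mathbf{F}^{i}=\binom{F_{1}^{i}}{F_{2}^{i}}_{\phi^{i}}$, and pass to the component complexes $\mathbf{F}_{1}$ and $\mathbf{F}_{2}$; both are exact, and by the flat $T$-module description each $F_{2}^{i}$ is flat over $B$, each $\phi^{i}$ is monic, and each $\mathrm{Coker}\phi^{i}$ is flat over $A$, so $\mathbf{F}_{2}$ is an exact complex of flat $B$-modules with $X_{2}=Z^{0}\mathbf{F}_{2}$. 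Exactly as in Theorem~\ref{the3.4}, tensoring $\mathbf{F}$ with $0\to(0,D)\to(\Hom_{B}(M,D),D)\to(\Hom_{B}(M,D),0)\to0$ for $D\in\mathcal{F}_{2}$ (the middle term handled by its membership in $\mathfrak{T}_{\mathcal{C}_{2},\mathcal{F}_{2}}$, the right term by its finite $\mathfrak{T}_{\mathcal{C}_{2},\mathcal{F}_{2}}$-injective dimension, which comes from the hypothesis on $\Hom_{B}(M,D)$) shows $D\otimes_{B}\mathbf{F}_{2}\cong(0,D)\otimes_{T}\mathbf{F}$ is exact, so $X_{2}$ is Gorenstein $\mathcal{D}_{B}$-flat. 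A dimension shift using ``$M_{B}$ has finite $\mathcal{F}_{2}$-injective dimension'' then shows $M\otimes_{B}\mathbf{F}_{2}$ is exact, and comparing $\phi^{X}$ with $\phi^{F^{0}}$ through the canonical inclusions $X_{i}\hookrightarrow F_{i}^{0}$ gives that $\phi^{X}$ is monic (again using that $\phi^{F^{0}}$ is monic by the flat $T$-module description). Finally, the quotient complex $\overline{\mathbf{F}_{1}}$ with terms $F_{1}^{i}/\mathrm{Im}\phi^{i}=\mathrm{Coker}\phi^{i}$ is an exact complex of flat $A$-modules (nine lemma, using exactness of the columns $M\otimes_{B}\mathbf{F}_{2}$ and $\mathbf{F}_{1}$) with $X_{1}/\mathrm{Im}\phi^{X}\cong Z^{0}\overline{\mathbf{F}_{1}}$, and for $G\in\mathcal{C}_{2}$ right exactness of $G\otimes_{A}-$ gives $G\otimes_{A}\overline{\mathbf{F}_{1}}\cong(G,0)\otimes_{T}\mathbf{F}$, which is exact since $(G,0)\in\mathfrak{T}_{\mathcal{C}_{2},\mathcal{F}_{2}}$; hence $\mathrm{Coker}\phi^{X}$ is Gorenstein $\mathcal{D}_{A}$-flat.

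For $(2)\Rightarrow(1)$, since $\phi^{X}$ is monic there is a short exact sequence $0\to\binom{M\otimes_{B}X_{2}}{X_{2}}_{\mathrm{id}}\to\binom{X_{1}}{X_{2}}_{\phi^{X}}\to\binom{\mathrm{Coker}\phi^{X}}{0}_{0}\to0$ of $T$-modules, so by Lemma~\ref{lem3.11} it suffices to prove the two outer terms are Gorenstein $\mathcal{D}_{T}$-flat. For $\binom{M\otimes_{B}X_{2}}{X_{2}}$ I would take an exact $\mathcal{F}_{2}^{\otimes}$-acyclic complex $\mathbf{U}$ of flat $B$-modules with $X_{2}=Z^{0}\mathbf{U}$; then $M\otimes_{B}\mathbf{U}$ is exact (same dimension shift), so the complex $\mathbf{V}$ with terms $\binom{M\otimes_{B}U^{i}}{U^{i}}_{\mathrm{id}}$ consists of flat $T$-modules (flat by the description: $U^{i}$ flat, structure map the identity, cokernel zero), is exact, and has $\binom{M\otimes_{B}X_{2}}{X_{2}}=Z^{0}\mathbf{V}$; and for $(H_{1},H_{2})\in\mathfrak{T}_{\mathcal{C}_{2},\mathcal{F}_{2}}$ the vanishing $(H_{1},0)\otimes_{T}\binom{M\otimes_{B}U^{i}}{U^{i}}=0$ together with $0\to(0,H_{2})\to(H_{1},H_{2})\to(H_{1},0)\to0$ yields $(H_{1},H_{2})\otimes_{T}\mathbf{V}\cong H_{2}\otimes_{B}\mathbf{U}$, which is exact. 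For $\binom{\mathrm{Coker}\phi^{X}}{0}$ I would take an exact $\mathcal{C}_{2}^{\otimes}$-acyclic complex $\mathbf{L}$ of flat $A$-modules with $\mathrm{Coker}\phi^{X}=Z^{0}\mathbf{L}$; then $\binom{\mathbf{L}}{0}$ is an exact complex of flat $T$-modules, and for $(H_{1},H_{2})_{\widetilde{\varphi_{H}}}\in\mathfrak{T}_{\mathcal{C}_{2},\mathcal{F}_{2}}$, tensoring $0\to\mathrm{Ker}(\widetilde{\varphi_{H}})\to H_{1}\to\Hom_{B}(M,H_{2})\to0$ with $\mathbf{L}$ gives a short exact sequence of complexes whose outer terms are exact (the left because $\mathbf{L}$ is $\mathcal{C}_{2}^{\otimes}$-acyclic and $\mathrm{Ker}(\widetilde{\varphi_{H}})\in\mathcal{C}_{2}$, the right because $\Hom_{B}(M,H_{2})$ has finite $\mathcal{C}_{2}$-injective dimension), so $(H_{1},H_{2})\otimes_{T}\binom{\mathbf{L}}{0}\cong H_{1}\otimes_{A}\mathbf{L}$ is exact, and $\binom{\mathrm{Coker}\phi^{X}}{0}$ is Gorenstein $\mathcal{D}_{T}$-flat.

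I expect no genuinely new difficulty here relative to Theorem~\ref{the3.4}. The points that need care are (i) pinning down the flat-$T$-module characterization and verifying that every complex built in the argument --- $\mathbf{V}$, $\binom{\mathbf{L}}{0}$, and the quotient complex $\overline{\mathbf{F}_{1}}$ --- genuinely consists of flat modules over the relevant ring (each time via ``$F_{2}$ flat, $\phi$ monic, $\mathrm{Coker}\phi$ flat''), and (ii) the several dimension-shifting steps that turn the finiteness hypotheses --- $M_{B}$ of finite $\mathcal{F}_{2}$-injective dimension and $\Hom_{B}(M,D)$ of finite $\mathcal{C}_{2}$-injective dimension --- into exactness of $M\otimes_{B}(-)$ and $\Hom_{B}(M,-)\otimes_{A}(-)$ applied to the totally acyclic complexes of flats that occur. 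It is worth recording explicitly that the use of Lemma~\ref{lem3.11} is legitimate precisely because $\mathcal{D}_{T}$ is semi-complete (Proposition~\ref{prop3.2}), and this is the only place where the proof of the flat case formally departs from that of the projective case.
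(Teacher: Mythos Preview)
Your proposal is correct and matches the paper's own proof essentially line for line: the paper's argument consists precisely of invoking the flat $T$-module characterization (there cited as \cite[Proposition~1.14]{fo75}), invoking Lemma~\ref{lem3.11} for closure under extensions, and then declaring that the remainder is ``an argument similar to that in Theorem~\ref{the3.4}.'' You have simply unpacked that similar argument in full, with the same substitutions (flat for projective, Lemma~\ref{lem3.11} for the dual of \cite[Lemma~3.5]{gj21}) that the paper intends.
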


\begin{proof}
From Lemma \ref{lem3.11}, we know that the class of  Gorenstein $\mathcal{D}_{T}$-flat left $T$-module is  closed under extensions. By \cite[Proposition 1.14]{fo75}, we know that $\ccc{F_{1}}{F_{2}}{\phi^{F}}$ is flat if and only if $F_{2}$ is flat in $B$-Mod, $\mathrm{coker}\phi^{F}$ is flat in $A$-Mod and $\phi^{F}$ is monomorphic. Then the proof follows
by an argument similar to that in Theorem \ref{the3.4}.
\end{proof}

\bigskip

Recall that a module $M$ is called\emph{ Gorenstein flat} \cite{ee00} if there exists an exact sequence
$$\cdots\rightarrow F_{1}\rightarrow F_{0}\rightarrow F_{-1}\rightarrow\cdots$$
of flat modules such that $M\cong \mathrm{Ker}(F_{0}\rightarrow F_{-1})$ and $I\otimes-$ leaves the sequence exact whenever $I$ is an injective right module.
Let $\mathcal{D}_{A}=(\langle _{A}\mathcal{F}\rangle, \langle \mathcal{I}_{A}\rangle)$  and $\mathcal{D}_{B}=(\langle _{B}\mathcal{F}\rangle, \langle \mathcal{I}_{B}\rangle)$, respectively. By \cite[Proposition 2.11]{gj21}, an $R$-module is Gorenstein flat  if and only if it is a Gorenstein $(\langle _{R}\mathcal{F}\rangle, \langle \mathcal{I}_{R}\rangle)$-flat module.
Then we have the following result. One can compare it with \cite[Theorem 2.8]{mao20}

\begin{corollary} \label{cor3.11} Let  $X=\ccc{X_{1}}{X_{2}}{\phi^{X}}$ be a left $T$-module. Suppose that $M_{B}$ has finite injective dimension, $_{A}M$ has finite flat dimension. Then the following conditions are equivalent:

$(1)$ $X$ is a Gorenstein flat left $T$-module.

$(2)$ $X_{2}$ is a Gorenstein flat left $B$-module, $\mathrm{Coker}\phi^{X}$ is a Gorenstein flat left $A$-module, and $\phi^{X}$ is a monomorphism.

Moreover, if $M_{B}$ is  finitely presented, then the above conditions are also equivalent to

$(3)$ $X$ is a Gorenstein $(\mathfrak{B}^{\langle _{A}\mathcal{F}\rangle}_{\langle _{B}\mathcal{F}\rangle}, \mathfrak{T}_{\langle \mathcal{I}_{A}\rangle,\langle \mathcal{I}_{B}\rangle})$-projective left $T$-module.

\end{corollary}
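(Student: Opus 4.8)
The plan is to follow the two-step pattern of Corollaries \ref{cor3.6} and \ref{cor3.8}. First I would settle the equivalence of $(1)$ and $(2)$, which holds without any finiteness assumption on $M_{B}$: under the hypotheses that $M_{B}$ has finite injective dimension and $_{A}M$ has finite flat dimension, the column description of Gorenstein flat left $T$-modules is exactly \cite[Theorem 2.3]{mao20} (which improves \cite[Theorem 3.8]{zhu16}), namely that $X=\ccc{X_{1}}{X_{2}}{\phi^{X}}$ is Gorenstein flat over $T$ if and only if $\phi^{X}$ is monic, $X_{2}$ is Gorenstein flat over $B$, and $\mathrm{Coker}\,\phi^{X}$ is Gorenstein flat over $A$. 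No new argument is needed for this part.

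For the remaining equivalence I would assume $M_{B}$ finitely presented and put $\mathcal{D}_{A}=(\langle _{A}\mathcal{F}\rangle,\langle \mathcal{I}_{A}\rangle)$, $\mathcal{D}_{B}=(\langle _{B}\mathcal{F}\rangle,\langle \mathcal{I}_{B}\rangle)$. By \cite[Proposition 2.11]{gj21}, over any ring $S$ a module is Gorenstein flat if and only if it is Gorenstein $(\langle _{S}\mathcal{F}\rangle,\langle \mathcal{I}_{S}\rangle)$-flat, so condition $(2)$ is exactly the assertion that $X_{2}$ is Gorenstein $\mathcal{D}_{B}$-flat, $\mathrm{Coker}\,\phi^{X}$ is Gorenstein $\mathcal{D}_{A}$-flat, and $\phi^{X}$ is monic. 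It therefore suffices to check that Setup \ref{set3.3} and the extra hypothesis of Theorem \ref{the3.8} hold for $\mathcal{D}_{A}$ and $\mathcal{D}_{B}$, and then to invoke Theorem \ref{the3.8}. The pairs $\mathcal{D}_{A}$ and $\mathcal{D}_{B}$ are semi-complete duality pairs by \cite[Lemmas 5.5--5.7]{CS21}; $M_{B}$ is finitely presented by hypothesis and, since $\mathcal{I}_{B}\subseteq\langle \mathcal{I}_{B}\rangle$, finite injective dimension of $M_{B}$ forces finite $\langle \mathcal{I}_{B}\rangle$-injective dimension, so Setup \ref{set3.3} is satisfied. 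For the hypothesis of Theorem \ref{the3.8} I would reuse the computation from the proof of Corollary \ref{cor3.6}: a module $N$ that is finitely presented as a right $B$-module and flat as a left $A$-module makes $\Hom_{B}(N,-)$ preserve injectives, products, colimits and pure embeddings (by \cite[Lemma 2.2]{mao20}), whence $\Hom_{B}(N,\langle \mathcal{I}_{B}\rangle)\subseteq\langle \mathcal{I}_{A}\rangle$; dimension shifting along a finite flat resolution of $_{A}M$ then yields that $\Hom_{B}(M,E)$ has finite $\langle \mathcal{I}_{A}\rangle$-injective dimension for every $E\in\langle \mathcal{I}_{B}\rangle$, which is precisely what Theorem \ref{the3.8} requires. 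Applying Theorem \ref{the3.8} gives $(2)\Leftrightarrow(3)$.

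The only point that is not purely formal is this last dimension shift, transferring the finite flat dimension of $_{A}M$ to finite $\langle \mathcal{I}_{A}\rangle$-injective dimension of $\Hom_{B}(M,E)$; but once the inclusion $\Hom_{B}(N,\langle \mathcal{I}_{B}\rangle)\subseteq\langle \mathcal{I}_{A}\rangle$ is known for $N$ finitely presented over $B$ and flat over $A$, this is a routine induction on the flat dimension, and it is already carried out verbatim in the proof of Corollary \ref{cor3.6}. Thus in practice the corollary is immediate from \cite[Theorem 2.3]{mao20}, \cite[Proposition 2.11]{gj21} and Theorem \ref{the3.8}.
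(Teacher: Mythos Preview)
Your proposal is correct and follows essentially the same route as the paper: cite Mao's characterization for $(1)\Leftrightarrow(2)$, then verify the hypotheses of Setup \ref{set3.3} and Theorem \ref{the3.8} exactly as in the proof of Corollary \ref{cor3.6} to obtain $(2)\Leftrightarrow(3)$. The only minor difference is that for $(1)\Leftrightarrow(2)$ the paper invokes the proof of \cite[Lemma 2.2]{mao20} together with the (now unconditional) closure of Gorenstein flat modules under extensions, rather than \cite[Theorem 2.3]{mao20} directly.
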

\begin{proof} The equivalence of (1) and (2) follows by  the proof of  \cite[Lemma 2.2]{mao20} and the fact that Gorenstein flat modules are closed under extensions over any ring.
Moreover, if $M_{B}$ is  finitely presented, by an argument similar to Theorem \ref{the3.4},
we see that all the assumptions of Setup \ref{set3.3} and Theorem \ref{the3.8} are satisfied. Thus the result follows.
\end{proof}

\bigskip
\section{ Recollements of stable categories  relative to  triangular matrix rings}\label{sec4}

Let $\mathcal{D}_{R} = \mathcal{(L,A)}$ denote a semi-complete duality pair over a ring $R$. There are following
three model structures induced by Gorenstein modules respect to $\mathcal{D}_{R}$. A nice introduction to the basic idea of a model category can be found in \cite{hv99}.

\begin{lemma} \cite[Corollary 5.1]{gj21}\label{lem4.1} The following abelian model structures are induced by $\mathcal{D}_{R} = \mathcal{(L,A)}$.

(1) The \textbf{Gorenstein $\mathcal{D}_{R}$-projective model structure} exists on $R$-$\mathrm{Mod}$. It is
a cofibrantly generated projective abelian model structure whose cofibrant objects are the
 Gorenstein $\mathcal{D}_{R}$-projective left $R$-modules.

(2) The \textbf{Gorenstein $\mathcal{D}_{R}$-injective model structure} exists on $\mathrm{Mod}$-$R$. It is a cofibrantly generated injective abelian model structure whose fibrant objects are the Gorenstein $\mathcal{D}_{R}$-injective right $R$-modules.

(3) The \textbf{Gorenstein $\mathcal{D}_{R}$-flat model structure} exists on $R$-$\mathrm{Mod}$. It is a cofibrantly generated abelian model structure whose cofibrant objects (resp. trivially cofibrant objects) are the Gorenstein $\mathcal{D}_{R}$-flat left modules (resp. flat left modules). Moreover, the trivial objects in this model structure coincide with those in the Gorenstein $\mathcal{D}_{R}$-projective model structure.
\end{lemma}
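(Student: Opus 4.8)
This lemma is Gillespie's omnibus construction of the three abelian model structures carried by a semi-complete duality pair, so in the write-up it suffices to cite \cite[Corollary 5.1]{gj21}; I outline the mechanism behind it, since all of Section~\ref{sec4} uses the resulting model structures as a black box. The engine is Hovey's correspondence: an abelian model structure on a bicomplete abelian category is the same datum as a \emph{Hovey triple} $(\mathcal{C},\mathcal{W},\mathcal{F})$ of classes (cofibrant, trivial, fibrant), which is in turn equivalent to a pair of complete cotorsion pairs $(\mathcal{C}\cap\mathcal{W},\,\mathcal{F})$ and $(\mathcal{C},\,\mathcal{W}\cap\mathcal{F})$ whose common class $\mathcal{W}$ is thick. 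So in each of the three cases the plan is identical: (a) name the two cotorsion pairs; (b) verify they are complete and hereditary; (c) check that $\mathcal{W}$ is thick; then Hovey's theorem manufactures the model structure and identifies its (trivially) cofibrant and fibrant objects.

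For the Gorenstein $\mathcal{D}_{R}$-projective model structure on $R$-Mod one takes all modules as the fibrant class $\mathcal{F}$, sets $\mathcal{C}=\mathcal{GP}_{\mathcal{D}_{R}}$ and $\mathcal{W}=\mathcal{GP}_{\mathcal{D}_{R}}^{\perp}$; then $\mathcal{C}\cap\mathcal{W}$ is the class $\mathcal{P}$ of projective modules, and the two cotorsion pairs to produce are the trivial $(\mathcal{P},\,R\text{-Mod})$ and $(\mathcal{GP}_{\mathcal{D}_{R}},\,\mathcal{GP}_{\mathcal{D}_{R}}^{\perp})$. The real content is the completeness of the second pair and the thickness of $\mathcal{W}$, and this is exactly where the semi-complete hypothesis is consumed: symmetry of $\mathcal{D}_{R}$ yields $\mathcal{GP}_{\mathcal{D}_{R}}=\mathcal{GP}^{\mathcal{D}_{R}}$ by \cite[Theorem A6]{br14}, hence $\Ext^{1}_{R}$-orthogonality of $\mathcal{GP}_{\mathcal{D}_{R}}$ against the cosyzygies of the modules in $\mathcal{L}$, while the semi-perfect part ($R\in\mathcal{L}$ and $\mathcal{L}$ closed under coproducts) furnishes the generating set needed to run the (dual) small object argument that produces the approximations. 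The Gorenstein $\mathcal{D}_{R}$-injective model structure on Mod-$R$ is obtained by the formally dual argument: $\mathcal{C}=\text{Mod-}R$, the fibrant objects are the Gorenstein $\mathcal{D}_{R}$-injective modules, $\mathcal{W}={}^{\perp}\mathcal{GI}_{\mathcal{D}_{R}}$, and an injective cogenerator replaces the projective generator.

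The Gorenstein $\mathcal{D}_{R}$-flat model structure is the delicate case. Here $\mathcal{C}=\mathcal{GF}_{\mathcal{D}_{R}}$, but $\mathcal{W}$ is still forced to be the thick class from the projective model above (this is why the trivial objects of the two structures coincide, as asserted), and the two cotorsion pairs needed are the classical flat cotorsion pair $(\mathcal{F}lat,\,\mathcal{F}lat^{\perp})$ on the trivial side, which makes the trivially cofibrant objects precisely the flat modules, and $(\mathcal{GF}_{\mathcal{D}_{R}},\,\mathcal{GF}_{\mathcal{D}_{R}}^{\perp})$ on the cofibrant side. The step I expect to be the main obstacle, were one to build this from scratch, is precisely the completeness of the Gorenstein $\mathcal{D}_{R}$-flat cotorsion pair: one needs first that $\mathcal{GF}_{\mathcal{D}_{R}}$ is closed under extensions, which is Lemma~\ref{lem3.11}, and then that it is the left half of a complete cotorsion pair; for the classical flat--injective duality pair the latter is the \v{S}aroch--\v{S}\v{t}ov\'{\i}\v{c}ek theorem, and in the general semi-complete case Gillespie deduces it from the duality-pair axioms together with the definable-class machinery around $\langle\mathcal{L}\rangle$ and $\langle\mathcal{A}\rangle$. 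Granting these inputs, Hovey's theorem assembles all three model structures and reads off the stated description of their cofibrant, fibrant and trivial objects, which completes the proof.
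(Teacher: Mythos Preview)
Your proposal is correct and in fact goes well beyond what the paper does: in the paper this lemma is simply quoted from \cite[Corollary 5.1]{gj21} with no proof given, exactly as you yourself note in your opening sentence. The expository sketch of Hovey's correspondence and of how the semi-complete hypothesis is consumed is accurate and helpful background, but it is supplementary; for the purposes of matching the paper you need only the citation.
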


According to the recollement constructed by \cite{cp14} and \cite{zh13}, we have the following recollement of abelian categories:
$$\xymatrix{A\text{-}\mathrm{Mod}\ar[r]^{i_{\ast}}&\ar@<-3ex>[l]_{i^{\ast}}
\ar@<3ex>[l]^{i^{!}}T\text{-}\mathrm{Mod}
\ar[r]^{j^{\ast}}&\ar@<-3ex>[l]_{j_{!}}\ar@<3ex>[l]^{j_{\ast}}B\text{-}\mathrm{Mod},}\eqno(4.1)$$
where $i^{\ast}$ is given by $\left(\begin{smallmatrix}  X  \\   Y \\\end{smallmatrix}\right)_{\phi}\mapsto \mathrm{coker}\phi$; $i_{\ast}$ is given by $X\mapsto \left(\begin{smallmatrix}  X  \\   0 \\\end{smallmatrix}\right)$; $i^{!}$ is given by $\left(\begin{smallmatrix}  X  \\   Y \\\end{smallmatrix}\right)_{\phi}\mapsto X$; $j_{!}$ is given by $Y\mapsto \left(\begin{smallmatrix}  M\otimes _{B}Y  \\   Y \\\end{smallmatrix}\right)_{id}$; $j^{\ast}$ is given by $\left(\begin{smallmatrix}  X  \\   Y \\\end{smallmatrix}\right)_{\phi}\mapsto Y$; $j_{\ast}$ is given by $Y\mapsto \left(\begin{smallmatrix}  0  \\   Y \\\end{smallmatrix}\right)$.

If we are given two cofibrantly generated model structures $\mathcal{M}(A)=(_{A}\mathcal{A},{_{A}\mathcal{W}},{_{A}\mathcal{L}})$ and $\mathcal{M}(B)=(_{B}\widetilde{\mathcal{A}},{_{B}\widetilde{\mathcal{W}}},{_{B}\widetilde{\mathcal{L}}})$ on $A$-Mod and $B$-Mod respectively, we investigate in \cite{zhu20} when there exists a cofibrantly generated model structure $\mathcal{M}(T)$ on $T$-Mod and a recollement of $\mathrm{Ho}(\mathcal{M}(T))$ relative to $\mathrm{Ho}(\mathcal{M}(A))$ and $\mathrm{Ho}(\mathcal{M}(B))$.
We defined in  \cite{zhu20} that a bimodule $_{A}M_{B}$ is \emph{perfect relative to $\mathcal{M}(A)$ and $\mathcal{M}(B)$}, if $\mathfrak{B}^{_{A}\mathcal{A}}_{_{B}\widetilde{\mathcal{A}}}\cap \mathfrak{U}_{_{B}\widetilde{\mathcal{W}}\cap {_{B}\widetilde{\mathcal{L}}}}^{_{A}\mathcal{W}\cap {_{A}\mathcal{L}}}=\mathfrak{B}^{_{A}\mathcal{A}\cap {_{A}\mathcal{W}}}_{_{B}\widetilde{\mathcal{A}}\cap {_{B}\widetilde{\mathcal{W}}}}\cap \mathfrak{U}_{_{B}\widetilde{\mathcal{L}}}^{_{A}\mathcal{L}}$.
More precisely, we have

\begin{lemma}\cite[Theorem 4.6]{zhu20} \label{the4.6}Let $T=\left(\begin{smallmatrix}  A & M \\  0 & B \\\end{smallmatrix}\right)$ be an upper triangular matrix ring, and let $\mathcal{M}(A)=(_{A}\mathcal{A},{_{A}\mathcal{W}},{_{A}\mathcal{L}})$ and $\mathcal{M}(B)=(_{B}\widetilde{\mathcal{A}},{_{B}\widetilde{\mathcal{W}}},{_{B}\widetilde{\mathcal{L}}})$ be cofibrantly generated abelian model structures on $A$-$\mathrm{Mod}$ and $B$-$\mathrm{Mod}$, respectively. If $\mathrm{Tor}_{1}^{B}(M,X)=0$ for any $X\in {_{B}\widetilde{\mathcal{A}}}$ and $M$ is perfect relative to $\mathcal{M}(A)$ and $\mathcal{M}(B)$, then $\mathcal{M}(T)=(\mathfrak{B}^{\mathcal{A}_{A}}_{\widetilde{\mathcal{A}}_{B}},{_{T}\mathcal{W}},\mathfrak{U}_{\widetilde{\mathcal{L}}_{B}}^{\mathcal{L}_{A}})$ is a cofibrantly generated abelian model structure on $T$-{\rm Mod} and we have a recollement as shown below
  $$\xymatrixcolsep{4pc}\xymatrix{\mathrm{Ho}(\mathcal{M}(A))
  \ar[r]^{\mathrm{L~i_{\ast}~}\cong\mathrm{~R~i_{\ast}}}&
  \ar@<-4ex>[l]_{\mathrm{L~i^{\ast}}}\ar@<3ex>[l]^{\mathrm{R~i^{!}}}\mathrm{Ho}(\mathcal{M}(T))
\ar[r]^{\mathrm{L~j^{\ast}~}\cong\mathrm{~R~j^{\ast}}}&\ar@<-4ex>[l]_{\mathrm{L~j_{!}}}\ar@<3ex>[l]^{\mathrm{R~j_{\ast}}}
\mathrm{Ho}(\mathcal{M}(B)),}$$
where $\mathrm{L~i^{\ast}}$, $\mathrm{L~i_{\ast}}$, $\mathrm{L~j_{!}}$, $\mathrm{L~j^{\ast}}$, $\mathrm{R~i_{\ast}}$,  $\mathrm{R~j^{\ast}}$, $\mathrm{R~i^{!}}$ and $\mathrm{R~j_{\ast}}$ are the total derived functors of those in (4.1).
\end{lemma}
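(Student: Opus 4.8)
Since this is \cite[Theorem 4.6]{zhu20}, the plan is to recall the two-stage argument behind it. \textbf{Stage 1: build $\mathcal{M}(T)$.} Using Hovey's correspondence between abelian model structures and cotorsion pairs, I would reduce the existence of $\mathcal{M}(T)$ to producing a thick class ${}_T\mathcal{W}$ of $T$-modules for which both $\bigl(\mathfrak{B}^{\mathcal{A}_A}_{\widetilde{\mathcal{A}}_B}\cap{}_T\mathcal{W},\ \mathfrak{U}^{\mathcal{L}_A}_{\widetilde{\mathcal{L}}_B}\bigr)$ and $\bigl(\mathfrak{B}^{\mathcal{A}_A}_{\widetilde{\mathcal{A}}_B},\ \mathfrak{U}^{\mathcal{L}_A}_{\widetilde{\mathcal{L}}_B}\cap{}_T\mathcal{W}\bigr)$ are complete cotorsion pairs. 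The candidate is the class of $X=\binom{X_1}{X_2}$ with $\phi^X$ monic, $\coker\phi^X\in{}_A\mathcal{W}$ and $X_2\in{}_B\widetilde{\mathcal{W}}$, so that $i^*$ and $j^*$ carry ${}_T\mathcal{W}$ into the trivial classes of $\mathcal{M}(A)$ and $\mathcal{M}(B)$; thickness I would check componentwise, via the long exact sequences relating $\mathrm{Ext}_T$ and $\mathrm{Tor}^T$ to the $A$- and $B$-data together with thickness of ${}_A\mathcal{W}$ and ${}_B\widetilde{\mathcal{W}}$. The defining identity for perfectness of $M$ relative to $\mathcal{M}(A)$ and $\mathcal{M}(B)$ (stated before the lemma) is what I would invoke to see that the componentwise candidate descriptions of the trivially cofibrant and trivially fibrant $T$-modules share a common core, and hence that such a ${}_T\mathcal{W}$ exists; the two $T$-cotorsion pairs are then generated, in the cotorsion-theoretic sense, by $i_*$ and $j_!$ applied to the generating cotorsion pairs of $\mathcal{M}(A)$ and $\mathcal{M}(B)$. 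Completeness and cofibrant generation follow from a small-object / Eklof--Trlifaj argument with generating set $i_*(S_A)\cup j_!(S_B)$; here the hypothesis $\mathrm{Tor}_1^B(M,X)=0$ for $X\in{}_B\widetilde{\mathcal{A}}$ is exactly what keeps $j_!\colon Y\mapsto\binom{M\otimes_B Y}{Y}$ exact on the modules involved, so that it sends generating (trivial) cofibrations to (trivial) cofibrations.

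\textbf{Stage 2: assemble the recollement.} Next I would verify that each of the four adjunctions $(i^*,i_*)$, $(i_*,i^!)$, $(j_!,j^*)$ and $(j^*,j_*)$ coming from the abelian recollement (4.1) is a Quillen adjunction for all three model structures. The left adjoints $i^*=\coker\phi$ and $j^*$ are right exact and, by the explicit descriptions of the classes, carry $\mathfrak{B}^{\mathcal{A}_A}_{\widetilde{\mathcal{A}}_B}$ into ${}_A\mathcal{A}$, ${}_B\widetilde{\mathcal{A}}$, while the perfectness identity gives the same for the trivially cofibrant classes; the exact functors $i_*$ and $j_!$ (the latter exact by the $\mathrm{Tor}$ hypothesis) evidently send cofibrant and trivially cofibrant objects to cofibrant and trivially cofibrant $T$-modules. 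Hence all four adjunctions are Quillen and descend to derived adjunctions on the homotopy categories. Since $i_*$ is exact and preserves the (trivially) cofibrant \emph{and} the (trivially) fibrant classes, it is at once left and right Quillen, so $\mathrm{L}\,i_*\cong\mathrm{R}\,i_*$; the same reasoning for the exact functor $j^*$ gives $\mathrm{L}\,j^*\cong\mathrm{R}\,j^*$. The on-the-nose full faithfulness of $i_*$, $j_!$, $j_*$, together with their exactness, yields full faithfulness of $\mathrm{L}\,i_*$, $\mathrm{L}\,j_!$, $\mathrm{R}\,j_*$ after (co)fibrant replacement, and $j^*i_*=0$ descends to $\mathrm{R}\,j^*\,\mathrm{R}\,i_*=0$. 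Finally I would produce the two gluing triangles by applying the derived functors to the short exact sequence $0\to i_*i^!X\to X\to j_*j^*X\to 0$ of $T$-modules (exact for every $X$) and to the sequence $j_!j^*X\to X\to i_*i^*X\to 0$ (right exact in general, but short exact once $X$ is cofibrant, since then $\phi^X$ is monic), evaluated on a cofibrant--fibrant replacement of $X$ and using the exactness of $i_*$, $j_!$, $j_*$, so that these sequences induce distinguished triangles in $\mathrm{Ho}(\mathcal{M}(T))$; together with the previous points this gives all the recollement axioms.

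\textbf{Main obstacle.} I expect the technical core to be Stage 1 --- proving that the two cotorsion pairs over $T$ are \emph{complete} and that ${}_T\mathcal{W}$ is thick --- since this is where both hypotheses are indispensable: $\mathrm{Tor}_1^B(M,-)=0$ on ${}_B\widetilde{\mathcal{A}}$ keeps $j_!$ exact so that generating sets and (trivial) cofibrations transport from $B$-Mod to $T$-Mod, and perfectness of $M$ forces the componentwise and the $T$-level descriptions of the trivial parts of the cotorsion pairs to agree. Once $\mathcal{M}(T)$ is in place, Stage 2 is careful but essentially formal; the one point there needing attention is that $i^*$ is only right exact, so the second gluing sequence is genuinely short exact only on cofibrant objects and the corresponding triangle must be built on a cofibrant replacement.
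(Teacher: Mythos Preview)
The paper does not prove this lemma at all: it is stated as a direct citation of \cite[Theorem 4.6]{zhu20} and is used as a black box in the proof of Theorem~\ref{the4.3}. So there is no ``paper's own proof'' to compare against.

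Your sketch is a reasonable reconstruction of how such a result is typically established, and the two-stage outline (build the model structure on $T$-Mod via Hovey's correspondence, then verify that the six functors from the abelian recollement are Quillen and descend to a recollement of homotopy categories) matches the strategy of \cite{zhu20}. One caution: your candidate for ${}_T\mathcal{W}$ is overly restrictive. Requiring $\phi^X$ to be monic with $\coker\phi^X\in{}_A\mathcal{W}$ would force ${}_T\mathcal{W}\subseteq\mathfrak{B}^{{}_A\mathcal{W}}_{{}_B\widetilde{\mathcal{W}}}$, but the trivial class in a Hovey triple must contain all trivially fibrant objects, and objects of $\mathfrak{U}^{{}_A\mathcal{L}}_{{}_B\widetilde{\mathcal{L}}}\cap{}_T\mathcal{W}$ need not have $\phi^X$ monic. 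In \cite{zhu20} the class ${}_T\mathcal{W}$ is obtained not by an explicit componentwise description but from the machinery that produces a unique thick class once the two cotorsion pairs $(\mathfrak{B}^{{}_A\mathcal{A}}_{{}_B\widetilde{\mathcal{A}}},\mathfrak{U}^{{}_A\mathcal{L}\cap{}_A\mathcal{W}}_{{}_B\widetilde{\mathcal{L}}\cap{}_B\widetilde{\mathcal{W}}})$ and $(\mathfrak{B}^{{}_A\mathcal{A}\cap{}_A\mathcal{W}}_{{}_B\widetilde{\mathcal{A}}\cap{}_B\widetilde{\mathcal{W}}},\mathfrak{U}^{{}_A\mathcal{L}}_{{}_B\widetilde{\mathcal{L}}})$ are shown to be complete and to share the same core --- and the perfectness hypothesis is precisely the statement that these cores agree. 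Apart from this, your outline is sound, but since the present paper simply quotes the result, no proof is expected here.
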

Let $\mathcal{M}'_{A}=(\mathcal{A}_{A},\mathcal{W}_{A},\mathcal{L}_{A})$ and $\mathcal{M}'_{B}=(\widetilde{\mathcal{A}}_{B},\widetilde{\mathcal{W}}_{B},\widetilde{\mathcal{L}}_{B})$ be two cofibrantly generated model structures on Mod-$A$ and Mod-$B$ respectively.
We define that a bimodule $_{A}M_{B}$ is \emph{coperfect relative to $\mathcal{M}'_{A}$ and $\mathcal{M}'_{B}$}, if $\mathfrak{U}_{\mathcal{A}_{A},\widetilde{\mathcal{A}}_{B}}\cap \mathfrak{T}_{\mathcal{W}_{A}\cap \mathcal{L}_{A},\widetilde{\mathcal{W}}_{B}\cap \widetilde{\mathcal{L}}_{B}}=
\mathfrak{U}_{\mathcal{A}_{A}\cap \mathcal{W}_{A},\widetilde{\mathcal{A}}_{B}\cap \widetilde{\mathcal{W}}_{B}}\cap \mathfrak{T}_{\mathcal{L}_{A},\widetilde{\mathcal{L}}_{B}}$.
Using \cite[Theorem 5.6]{mlx20}, \cite[Remark 4.7]{zhu20} and proceeding in a similar way as in Lemma \ref{the4.6}, we have the following result.

\begin{lemma}\label{lem4.3}If  $\mathrm{Ext}_{B}^{1}(M,F)=0$ for any $F\in \widetilde{\mathcal{L}_{B}}$, and $M$ is coperfect relative to $\mathcal{M}'_{A}$ and $\mathcal{M}'_{B}$, then  there exists a unique class $\mathcal{W}'_{T}$ such that $\mathcal{M}'_{T}=(\mathfrak{U}_{\mathcal{A}_{A},\widetilde{\mathcal{A}}_{B}},\mathcal{W}'_{T},\mathfrak{T}_{\mathcal{L}_{A},\widetilde{\mathcal{L}}_{B}})$ is a cofibrantly generated abelian model structure on $T$-{\rm Mod} and we have a recollement as shown below
  $$\xymatrixcolsep{4pc}\xymatrix{\mathrm{Ho}(\mathcal{M}'_{A})
  \ar[r]^{\mathrm{L~i_{\ast}~}\cong\mathrm{~R~i_{\ast}}}&
  \ar@<-4ex>[l]_{\mathrm{L~i^{\ast}}}\ar@<3ex>[l]^{\mathrm{R~i^{!}}}\mathrm{Ho}(\mathcal{M}'_{T})
\ar[r]^{\mathrm{L~j^{\ast}~}\cong\mathrm{~R~j^{\ast}}}&\ar@<-4ex>[l]_{\mathrm{L~j_{!}}}\ar@<3ex>[l]^{\mathrm{R~j_{\ast}}}
\mathrm{Ho}(\mathcal{M}'_{B}),}$$
where $\mathrm{L~i^{\ast}}$, $\mathrm{L~i_{\ast}}$, $\mathrm{L~j_{!}}$, $\mathrm{L~j^{\ast}}$, $\mathrm{R~i_{\ast}}$,  $\mathrm{R~j^{\ast}}$, $\mathrm{R~i^{!}}$ and $\mathrm{R~j_{\ast}}$ are the total derived functors of those in (4.1).
\end{lemma}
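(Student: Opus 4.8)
The plan is to mimic the proof of Lemma~\ref{the4.6} (that is, \cite[Theorem 4.6]{zhu20}), carrying every construction over from left $T$-modules to right $T$-modules via the identification of $\mathrm{Mod}$-$T$ with triples $(X,Y)_{\varphi}$ and using the six adjoint functors of $(4.1)$ in their right-module incarnation. Recall that, by the standard (Hovey--Gillespie) correspondence, the cofibrantly generated abelian model structure $\mathcal{M}'_{A}=(\mathcal{A}_{A},\mathcal{W}_{A},\mathcal{L}_{A})$ is the same datum as a thick class $\mathcal{W}_{A}$ together with the two complete cotorsion pairs $(\mathcal{A}_{A}\cap\mathcal{W}_{A},\mathcal{L}_{A})$ and $(\mathcal{A}_{A},\mathcal{W}_{A}\cap\mathcal{L}_{A})$ in $\mathrm{Mod}$-$A$, and likewise for $\mathcal{M}'_{B}$ over $B$; the first task is thus to glue these to $\mathrm{Mod}$-$T$.

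First I would invoke \cite[Theorem 5.6]{mlx20}. The hypothesis $\mathrm{Ext}_{B}^{1}(M,F)=0$ for all $F\in\widetilde{\mathcal{L}}_{B}$ is what guarantees that $\big(\mathfrak{U}_{\mathcal{A}_{A},\widetilde{\mathcal{A}}_{B}},\,\mathfrak{T}_{\mathcal{W}_{A}\cap\mathcal{L}_{A},\,\widetilde{\mathcal{W}}_{B}\cap\widetilde{\mathcal{L}}_{B}}\big)$ and $\big(\mathfrak{U}_{\mathcal{A}_{A}\cap\mathcal{W}_{A},\,\widetilde{\mathcal{A}}_{B}\cap\widetilde{\mathcal{W}}_{B}},\,\mathfrak{T}_{\mathcal{L}_{A},\widetilde{\mathcal{L}}_{B}}\big)$ are complete cotorsion pairs in $\mathrm{Mod}$-$T$, inheriting cofibrant generation from their components. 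One has the evident inclusions $\mathfrak{U}_{\mathcal{A}_{A}\cap\mathcal{W}_{A},\,\widetilde{\mathcal{A}}_{B}\cap\widetilde{\mathcal{W}}_{B}}\subseteq\mathfrak{U}_{\mathcal{A}_{A},\widetilde{\mathcal{A}}_{B}}$ and $\mathfrak{T}_{\mathcal{W}_{A}\cap\mathcal{L}_{A},\,\widetilde{\mathcal{W}}_{B}\cap\widetilde{\mathcal{L}}_{B}}\subseteq\mathfrak{T}_{\mathcal{L}_{A},\widetilde{\mathcal{L}}_{B}}$, while coperfectness of $M$ says exactly that the two pairs have equal intersection $\mathfrak{U}_{\mathcal{A}_{A},\widetilde{\mathcal{A}}_{B}}\cap\mathfrak{T}_{\mathcal{W}_{A}\cap\mathcal{L}_{A},\,\widetilde{\mathcal{W}}_{B}\cap\widetilde{\mathcal{L}}_{B}}=\mathfrak{U}_{\mathcal{A}_{A}\cap\mathcal{W}_{A},\,\widetilde{\mathcal{A}}_{B}\cap\widetilde{\mathcal{W}}_{B}}\cap\mathfrak{T}_{\mathcal{L}_{A},\widetilde{\mathcal{L}}_{B}}$. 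By the Hovey--Gillespie criterion these data determine a unique thick class $\mathcal{W}'_{T}$ making $\mathcal{M}'_{T}=(\mathfrak{U}_{\mathcal{A}_{A},\widetilde{\mathcal{A}}_{B}},\mathcal{W}'_{T},\mathfrak{T}_{\mathcal{L}_{A},\widetilde{\mathcal{L}}_{B}})$ a Hovey triple, i.e.\ a cofibrantly generated abelian model structure on $T$-$\mathrm{Mod}$.

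It remains to produce the recollement. The functors of $(4.1)$ form adjoint triples $(i^{\ast},i_{\ast},i^{!})$ and $(j_{!},j^{\ast},j_{\ast})$; following \cite[Remark 4.7]{zhu20} and the proof of Lemma~\ref{the4.6}, I would verify that $(i^{\ast},i_{\ast})$, $(i_{\ast},i^{!})$, $(j_{!},j^{\ast})$ and $(j^{\ast},j_{\ast})$ are Quillen adjunctions between $\mathcal{M}'_{T}$ and $\mathcal{M}'_{A}$, respectively $\mathcal{M}'_{B}$. Concretely this amounts to checking that $i^{\ast}$ and $j_{!}$ send (trivial) cofibrations to (trivial) cofibrations and, dually, that $i^{!}$ and $j_{\ast}$ send (trivial) fibrations to (trivial) fibrations, which is read off from the explicit descriptions of the classes $\mathfrak{U}_{\ast,\ast}$ and $\mathfrak{T}_{\ast,\ast}$ and the $\mathrm{Ext}_{B}^{1}(M,-)$-vanishing hypothesis. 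Since $i_{\ast}$ is then simultaneously left and right Quillen, $\mathrm{L}\,i_{\ast}\cong\mathrm{R}\,i_{\ast}$ on homotopy categories; passing to total derived functors and using that the six functors already constitute a recollement of abelian categories, one deduces the recollement axioms for $\mathrm{Ho}(\mathcal{M}'_{T})$ relative to $\mathrm{Ho}(\mathcal{M}'_{A})$ and $\mathrm{Ho}(\mathcal{M}'_{B})$ --- the adjunctions persist, $\mathrm{L}\,i_{\ast}\cong\mathrm{R}\,i_{\ast}$, $\mathrm{L}\,j_{!}$ and $\mathrm{R}\,j_{\ast}$ remain fully faithful, and $\im(\mathrm{L}\,i_{\ast})=\ker(\mathrm{L}\,j^{\ast})$ --- exactly as in Lemma~\ref{the4.6}.

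The step I expect to be the main obstacle is the dual bookkeeping: \cite{mlx20} and \cite{zhu20} are stated on the left-module side (with classes of the form $\mathfrak{B}^{\ast}_{\ast}$, $\mathfrak{U}^{\ast}_{\ast}$), whereas here everything must be transported to the opposite side, with $\mathfrak{U}_{\ast,\ast}$ and $\mathfrak{T}_{\ast,\ast}$, and one must make sure that on the right-module side the condition ``$\mathrm{Ext}_{B}^{1}(M,F)=0$ for $F\in\widetilde{\mathcal{L}}_{B}$'' plays precisely the role that ``$\mathrm{Tor}_{1}^{B}(M,X)=0$'' plays in Lemma~\ref{the4.6}. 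Once the gluing of cotorsion pairs and the Quillen adjunctions are set up, both the existence of $\mathcal{M}'_{T}$ and the recollement are formal and run parallel to the already-established Lemma~\ref{the4.6}.
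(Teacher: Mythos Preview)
Your proposal is correct and matches the paper's approach exactly: the paper does not give a detailed proof but simply states that the result follows ``using \cite[Theorem 5.6]{mlx20}, \cite[Remark 4.7]{zhu20} and proceeding in a similar way as in Lemma~\ref{the4.6}'', which is precisely the strategy you have spelled out. Your elaboration of the dual bookkeeping (right modules, $\mathfrak{T}_{\ast,\ast}$ in place of $\mathfrak{B}^{\ast}_{\ast}$, the $\mathrm{Ext}^{1}_{B}(M,-)$-vanishing replacing the $\mathrm{Tor}^{B}_{1}(M,-)$-vanishing) is exactly what the paper intends by ``proceeding in a similar way''.
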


Let $\mathcal{GP}_{\mathcal{D}_{T}}$ and $\mathcal{GI}_{\mathcal{D}_{T}}$ denote the class of all Gorenstein $\mathcal{D}_{T}$-projective  left $T$-modules and Gorenstein $\mathcal{D}_{T}$-injective right $T$-modules, respectively. Let $R$ be a ring.
The Hovey triple corresponding to the Gorenstein $\mathcal{D}_{R}$-projective model structure is $\mathcal{M}_{R}=(\mathcal{GP}_{\mathcal{D}_{R}},{_{R}\mathcal{W}}, $ $R\text{-}\mathrm{Mod})$. By \cite[Section 4.2]{gj16b}, we know that the homotopy category $\mathrm{Ho}(\mathcal{M}_{R})$ is a triangulated category and it is triangle equivalent to the stable category $\underline{\mathcal{GP}_{\mathcal{D}_{R}}}:=\mathcal{GP}_{\mathcal{D}_{R}}/_{R}\mathcal{P}$, where $_{R}\mathcal{P}$ is the class of projective left $R$-modules. The Hovey triple corresponding to the Gorenstein $\mathcal{D}_{R}$-injective model structure is $\mathcal{M'}_{R}=(\mathrm{Mod}\text{-}R,\mathcal{V}_{R},\mathcal{GI}_{\mathcal{D}_{R}})$. By \cite[Section 4.2]{gj16b}, we know that the homotopy category $\mathrm{Ho}(\mathcal{M}'_{R})$ is triangle equivalent to the stable category $\underline{\mathcal{GI}_{\mathcal{D}_{R}}}:=\mathcal{GP}_{\mathcal{D}_{R}}/\mathcal{I}_{R}$.
\begin{lemma}\label{lem4.3} (1) Let $M$ be a right $B$-module with finite $\mathcal{F}_{2}$-injective dimension and $G$ be a Gorenstein $\mathcal{D}_{B}$-projective left module. Then $\mathrm{Tor}_{i}^{B}(M,G)=0$ for all $i>0$.

(2) Let $M$ be a right $B$-module with finite $\mathcal{F}_{2}$-injective dimension and $E$ be a Gorenstein $\mathcal{D}_{B}$-injective right module. Then $\mathrm{Ext}^{i}_{B}(M,E)=0$ for all $i>0$.
\end{lemma}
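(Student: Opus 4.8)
The plan is to prove both statements by dimension shifting against the defining acyclic complexes. I will treat (1) in detail; (2) is entirely dual, using $\mathrm{Hom}_B(-,E)$ in place of $M\otimes_B-$ and an injective coresolution of $E$ in place of the complex below.

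First, recall that since $G$ is Gorenstein $\mathcal{D}_B$-projective, by Definition \ref{def2.4}(1) there is an exact $\mathcal{F}_2^{\otimes}$-acyclic complex of projective left $B$-modules
$$\mathbf{Q}:~\cdots\longrightarrow Q^{-1}\longrightarrow Q^{0}\longrightarrow Q^{1}\longrightarrow\cdots$$
with $G=Z^0\mathbf{Q}$; in particular $G$ fits into short exact sequences $0\to G\to Q^0\to G^1\to 0$ where $G^1=Z^1\mathbf{Q}$ is again Gorenstein $\mathcal{D}_B$-projective, and iterating, $G$ has a (co)resolution by projectives whose cosyzygies are all Gorenstein $\mathcal{D}_B$-projective. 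The key observation is that for any projective $Q$ one has $\mathrm{Tor}_i^B(M,Q)=0$ for $i>0$ automatically, so a standard dimension-shift along $0\to G\to Q^0\to G^1\to 0$ gives $\mathrm{Tor}_i^B(M,G)\cong\mathrm{Tor}_{i+1}^B(M,G^1)$ for $i\geq 1$; hence it suffices to prove $\mathrm{Tor}_1^B(M,G')=0$ for every Gorenstein $\mathcal{D}_B$-projective $G'$, because then repeated shifting kills all higher $\mathrm{Tor}$.

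To handle $\mathrm{Tor}_1$, I would instead shift in the other direction, using the left half of $\mathbf{Q}$: write $G=\mathrm{Coker}(Q^{-n-1}\to Q^{-n})$ after truncating, so that $G$ has projective resolutions of arbitrary length whose $n$-th syzygies $K_n=Z^{-n}\mathbf{Q}$ are again Gorenstein $\mathcal{D}_B$-projective. Then $\mathrm{Tor}_1^B(M,G)$ is a subquotient computed from a piece of $M\otimes_B\mathbf{Q}$; more precisely, splicing the short exact sequences shows $\mathrm{Tor}_1^B(M,G)$ is the first homology of the complex obtained by applying $M\otimes_B-$ to a projective resolution of $G$ extracted from $\mathbf{Q}$. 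Here is where the hypothesis that $M$ has finite $\mathcal{F}_2$-injective dimension, say $d$, enters: choose an exact sequence $0\to M\to F^0\to F^1\to\cdots\to F^d\to 0$ with each $F^j\in\mathcal{F}_2$. Since $\mathbf{Q}$ is $\mathcal{F}_2^{\otimes}$-acyclic, each $F^j\otimes_B\mathbf{Q}$ is exact; a hyper-homology / double-complex argument (or repeated use of the long exact sequence in $\mathrm{Tor}$ along the short exact sequences obtained by breaking up the $F^\bullet$-resolution) then shows $\mathrm{Tor}_i^B(M,G)$ agrees with $\mathrm{Tor}_{i+d}^B(-,G)$ applied to the last cokernel, which vanishes because $F^\bullet\otimes_B\mathbf{Q}$ is exact at every spot. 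Concretely, for each $i>0$ one gets $\mathrm{Tor}_i^B(M,G)\hookrightarrow \bigoplus$ of terms $\mathrm{Tor}_{i'}^B(F^j,G)$ for various $i'>0$ and $j$, and since each $F^j\in\mathcal{F}_2$ and $\mathbf{Q}$ is $\mathcal{F}_2^{\otimes}$-acyclic with all (co)syzygies Gorenstein $\mathcal{D}_B$-projective, these all vanish; I would make this precise by inducting on the injective dimension $d$, the base case $d=0$ (i.e. $M\in\langle\mathcal{F}_2\rangle$-type, directly $\mathcal{F}_2^{\otimes}$-acyclicity) being immediate.

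The main obstacle is the base case and the bookkeeping of the induction on $\mathcal{F}_2$-injective dimension: one must be careful that the cosyzygies used at each stage are genuinely Gorenstein $\mathcal{D}_B$-projective (so that the $\mathcal{F}_2^{\otimes}$-acyclicity of the complex $\mathbf{Q}$ can be re-invoked after each shift) and that "finite $\mathcal{F}_2$-injective dimension" interacts correctly with the $\mathrm{Tor}$ long exact sequences — i.e. that each $\mathrm{Tor}_i^B(F^j, G)=0$ for $i>0$, which itself follows since $F^j\in\mathcal{F}_2$ and $M\otimes_B\mathbf{Q}$ being replaced by $F^j\otimes_B\mathbf{Q}$ is exact with $G$ a syzygy. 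For part (2), the dual argument replaces $\mathbf{Q}$ by the defining exact $\mathcal{F}_2$-acyclic complex of injectives $\mathbf{I}$ with $E=Z^0\mathbf{I}$, replaces $M\otimes_B-$ by $\mathrm{Hom}_B(M,-)$, uses the same finite $\mathcal{F}_2$-injective coresolution $0\to M\to F^\bullet\to 0$ and the fact that $\mathrm{Hom}_B(F^j,\mathbf{I})$ is exact for $F^j\in\mathcal{F}_2$, together with dimension shifting in $\mathrm{Ext}$; the vanishing of $\mathrm{Ext}_B^i(M,E)$ for $i>0$ then drops out identically.
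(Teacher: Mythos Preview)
Your proposal is correct and rests on the same two ingredients as the paper's proof: the base case $M\in\mathcal{F}_2$, where $\mathrm{Tor}_i^B(M,G)=0$ follows directly from the $\mathcal{F}_2^{\otimes}$-acyclicity of the defining complex $\mathbf{Q}$, and an induction on the $\mathcal{F}_2$-injective dimension of $M$ via the long exact Tor sequence. The paper simply runs this induction directly for all $i>0$ at once, using a single short exact sequence $0\to M\to I_0\to I_1\to 0$ with $I_0\in\mathcal{F}_2$; your preliminary reduction to $\mathrm{Tor}_1$ via dimension shifting along $\mathbf{Q}$ is correct but unnecessary, and makes the write-up longer than it needs to be.
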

\begin{proof}We just prove (1) since (2) follows by a similar way.
  Denote by $\mathrm{fid}(M)$ the $\mathcal{F}_{2}$-injective dimension of $M_{B}$. We shall inducton the $\mathcal{F}_{2}$-injective dimension of $M_{B}$. If fid$(M)=0$ the result is trivial. Let $n>0$ and assume that the result is true for any right $B$-module with $\mathcal{F}_{2}$-injective dimension equal to $n$. Moreover, suppose that fid$(M)=n+1$. Then there exists a short exact sequence of right $B$-modules
$0\rightarrow M \rightarrow I_{0}\rightarrow I_{1}\rightarrow 0$
with $I_{0}$ belong to $\mathcal{F}_{2}$ and $\mathrm{fid}(I_{1})=n$. From this exact sequence, we obtain the following exact sequence:
$$ \mathrm{Tor}^{B}_{i+1}(I_{1},G)\rightarrow \mathrm{Tor}^{B}_{i}(M,G)\stackrel{}\rightarrow \mathrm{Tor}^{B}_{i}(I_{0},G)$$
where $G$ is  Gorenstein $\mathcal{D}_{B}$-projective. By induction hypothesis, we have $\mathrm{Tor}^{B}_{i+1}(I_{1},G)=\mathrm{Tor}^{B}_{i}(I_{0},G)=0$ for all $i > 0$.
Hence $\mathrm{Tor}^{B}_{i}(M,G)=0$ for all $i > 0$ and every Gorenstein $\mathcal{D}_{B}$-projective module $G$ if $M$ has finite $\mathcal{F}_{2}$-injective dimension.
\end{proof}

Let $R$ be any ring. Let $\mathcal{GF}_{\mathcal{D}_{T}}$  denotes the class of all Gorenstein $\mathcal{D}_{T}$-flat  modules. Denote by  $_{R}\mathcal{CT}:={_{R}\mathcal{F}^{\perp}}$ the class of cotorsion left $R$-modules. The Hovey triple corresponding to the Gorenstein $\mathcal{D}_{R}$-flat model structure is $\mathcal{N}_{R}=(\mathcal{GF}_{\mathcal{D}_{R}},{_{R}\mathcal{E}}, {_{R}\mathcal{CT}})$. According to \cite[Proposition 3.1]{eip20}, we see that $\mathcal{GF}_{\mathcal{D}_{R}}\cap\mathcal{GF}_{\mathcal{D}_{R}}^{\perp}={_{R}\mathcal{F}}\cap{_{R}\mathcal{CT}}$.
Its homotopy category $\mathrm{Ho}(\mathcal{N}_{A})$ is  triangle equivalent to the stable category $\underline{\mathcal{GF}_{\mathcal{D}_{R}}\cap {_{R}\mathcal{CT}}}:=(\mathcal{GF}_{\mathcal{D}_{R}}\cap {_{R}\mathcal{CT}}) /(_{R}\mathcal{F}\cap{_{R}\mathcal{CT}})$.
\begin{theorem}\label{the4.3} Assume Setup \ref{set3.3} and suppose that $\Hom_{B}(M,D)$ has finite $\mathcal{C}_{2}$-injective dimension for each $D\in\mathcal{F}_{2}$. Then we have  recollements
$$\xymatrixcolsep{4pc}\xymatrix{\underline{\mathcal{GP}_{\mathcal{D}_{A}}}
  \ar[r]^{\mathrm{L~i_{\ast}~}}&
  \ar@<-4ex>[l]_{\mathrm{L~i^{\ast}}}\ar@<3ex>[l]^{\mathrm{R~i^{!}}}\underline{\mathcal{GP}_{\mathcal{D}_{T}}}
\ar[r]^{\mathrm{L~j^{\ast}~}}&\ar@<-4ex>[l]_{\mathrm{L~j_{!}}}\ar@<3ex>[l]^{\mathrm{R~j_{\ast}}}
\underline{\mathcal{GP}_{\mathcal{D}_{B}}}};\leqno(1)$$
$$\xymatrixcolsep{4pc}\xymatrix{\underline{\mathcal{GI}_{\mathcal{D}_{A}}}
  \ar[r]^{\mathrm{L~i_{\ast}~}}&
  \ar@<-4ex>[l]_{\mathrm{L~i^{\ast}}}\ar@<3ex>[l]^{\mathrm{R~i^{!}}}\underline{\mathcal{GI}_{\mathcal{D}_{T}}}
\ar[r]^{\mathrm{L~j^{\ast}~}}&\ar@<-4ex>[l]_{\mathrm{L~j_{!}}}\ar@<3ex>[l]^{\mathrm{R~j_{\ast}}}
\underline{\mathcal{GI}_{\mathcal{D}_{B}}}};\leqno(2)$$
$$\xymatrixcolsep{4pc}\xymatrix{\underline{\mathcal{GF}_{\mathcal{D}_{A}}\cap {_{A}\mathcal{CT}}}
  \ar[r]^{\mathrm{L~i_{\ast}~}}&
  \ar@<-4ex>[l]_{\mathrm{L~i^{\ast}}}\ar@<3ex>[l]^{\mathrm{R~i^{!}}}\underline{\mathcal{GF}_{\mathcal{D}_{T}}\cap {_{T}\mathcal{CT}}}
\ar[r]^{\mathrm{L~j^{\ast}~}}&\ar@<-4ex>[l]_{\mathrm{L~j_{!}}}\ar@<3ex>[l]^{\mathrm{R~j_{\ast}}}
\underline{\mathcal{GF}_{\mathcal{D}_{B}}\cap {_{B}\mathcal{CT}}}}.\leqno(3)$$

\end{theorem}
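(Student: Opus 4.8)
The plan is to obtain each of the three recollements by specializing the model-categorical gluing theorems of Section~\ref{sec4} --- Lemma~\ref{the4.6} for left $T$-modules and its coperfect analogue for right $T$-modules --- to the three pairs of model structures on $A$-$\mathrm{Mod}$ / $\mathrm{Mod}$-$A$ and on $B$-$\mathrm{Mod}$ / $\mathrm{Mod}$-$B$ produced by Lemma~\ref{lem4.1} from $\mathcal{D}_A$ and $\mathcal{D}_B$, and then to transport the resulting recollements of homotopy categories into recollements of stable categories via the triangle equivalences $\mathrm{Ho}(\mathcal{M}_R)\simeq\underline{\mathcal{GP}_{\mathcal{D}_R}}$, $\mathrm{Ho}(\mathcal{M}'_R)\simeq\underline{\mathcal{GI}_{\mathcal{D}_R}}$ and $\mathrm{Ho}(\mathcal{N}_R)\simeq\underline{\mathcal{GF}_{\mathcal{D}_R}\cap{}_R\mathcal{CT}}$ recalled just above. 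In each of the three cases the work divides into (a) checking the hypotheses of the relevant gluing theorem --- a $\mathrm{Tor}$- or $\mathrm{Ext}$-vanishing condition, together with the \emph{perfect}/\emph{coperfect} condition on the bimodule $M$ --- and (b) identifying the glued model structure on $T$-$\mathrm{Mod}$ with the Gorenstein $\mathcal{D}_T$-projective, $\mathcal{D}_T$-injective, respectively $\mathcal{D}_T$-flat model structure, using the characterizations of Section~\ref{sec3} and the fact, from Proposition~\ref{prop3.2}, that $\mathcal{D}_T$ is a semi-complete duality pair (so that the target model structure exists by Lemma~\ref{lem4.1}).

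For (1), take $\mathcal{M}(A)=(\mathcal{GP}_{\mathcal{D}_A},{}_A\mathcal{W},A\text{-}\mathrm{Mod})$ and $\mathcal{M}(B)=(\mathcal{GP}_{\mathcal{D}_B},{}_B\mathcal{W},B\text{-}\mathrm{Mod})$ from Lemma~\ref{lem4.1}(1). The vanishing $\mathrm{Tor}_1^B(M,X)=0$ for all $X\in\mathcal{GP}_{\mathcal{D}_B}$ is Lemma~\ref{lem4.3}(1), available since $M_B$ has finite $\mathcal{F}_2$-injective dimension by Setup~\ref{set3.3}(2). For the perfect condition I would unwind both sides of the required equality: in a projective abelian model structure the trivially cofibrant objects are precisely the projectives, so $\mathcal{GP}_{\mathcal{D}_A}\cap{}_A\mathcal{W}={}_A\mathcal{P}$ and $\mathcal{GP}_{\mathcal{D}_B}\cap{}_B\mathcal{W}={}_B\mathcal{P}$; combining this with the thickness of ${}_A\mathcal{W}$, with Theorem~\ref{the3.4}, and with the splitting of $0\to M\otimes_BX_2\to X_1\to\mathrm{coker}\,\phi^X\to0$ when $\mathrm{coker}\,\phi^X$ is projective, the equality comes down to the assertion that $M\otimes_B{}_BQ$ lies in the trivial class ${}_A\mathcal{W}$ for every projective left $B$-module $Q$ --- and this is exactly where the finiteness hypotheses of Setup~\ref{set3.3} and the hypothesis on $\mathrm{Hom}_B(M,-)$ are used. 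Granting this, Lemma~\ref{the4.6} yields a cofibrantly generated abelian model structure $\mathcal{M}(T)=(\mathfrak{B}^{\mathcal{GP}_{\mathcal{D}_A}}_{\mathcal{GP}_{\mathcal{D}_B}},{}_T\mathcal{W},T\text{-}\mathrm{Mod})$ together with the desired recollement of homotopy categories. By Theorem~\ref{the3.4} the cofibrant class $\mathfrak{B}^{\mathcal{GP}_{\mathcal{D}_A}}_{\mathcal{GP}_{\mathcal{D}_B}}$ equals $\mathcal{GP}_{\mathcal{D}_T}$, and since a projective abelian model structure is determined by its class of cofibrant objects, $\mathcal{M}(T)$ is the Gorenstein $\mathcal{D}_T$-projective model structure; hence $\mathrm{Ho}(\mathcal{M}(T))\simeq\underline{\mathcal{GP}_{\mathcal{D}_T}}$, and likewise on the $A$- and $B$-sides, so reading the recollement of Lemma~\ref{the4.6} through these equivalences gives (1).

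Part (2) is the formal dual of part (1): one applies the coperfect version of Lemma~\ref{the4.6} to the right-module model structures $\mathcal{M}'_A=(\mathrm{Mod}\text{-}A,\mathcal{V}_A,\mathcal{GI}_{\mathcal{D}_A})$ and $\mathcal{M}'_B=(\mathrm{Mod}\text{-}B,\mathcal{V}_B,\mathcal{GI}_{\mathcal{D}_B})$ of Lemma~\ref{lem4.1}(2), where the required $\mathrm{Ext}^1_B(M,F)=0$ for $F\in\mathcal{GI}_{\mathcal{D}_B}$ is Lemma~\ref{lem4.3}(2), the coperfect condition is checked by the dual of the argument above (now the trivially fibrant objects of an injective abelian model structure are the injectives, and one uses Theorem~\ref{the3.7}), and the fibrant class $\mathfrak{T}_{\mathcal{GI}_{\mathcal{D}_A},\mathcal{GI}_{\mathcal{D}_B}}$ of the glued structure is identified with $\mathcal{GI}_{\mathcal{D}_T}$ by Theorem~\ref{the3.7}; since an injective abelian model structure is determined by its fibrant class, this produces (2) after passing to stable categories. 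For (3) one uses Lemma~\ref{the4.6} again, now with the Gorenstein $\mathcal{D}_A$- and $\mathcal{D}_B$-flat model structures $(\mathcal{GF}_{\mathcal{D}_A},{}_A\mathcal{E},{}_A\mathcal{CT})$ and $(\mathcal{GF}_{\mathcal{D}_B},{}_B\mathcal{E},{}_B\mathcal{CT})$ of Lemma~\ref{lem4.1}(3): the needed $\mathrm{Tor}_1^B(M,X)=0$ for $X\in\mathcal{GF}_{\mathcal{D}_B}$ follows by the same induction on the $\mathcal{F}_2$-injective dimension of $M_B$ as in Lemma~\ref{lem4.3}(1), the base case $M_B\in\mathcal{F}_2$ being immediate from the definition of Gorenstein $\mathcal{D}_B$-flatness; the perfect condition is handled as in (1) using Lemma~\ref{lem4.1}(3) (which identifies ${}_R\mathcal{E}$ with ${}_R\mathcal{W}$) and the identity $\mathcal{GF}_{\mathcal{D}_R}\cap\mathcal{GF}_{\mathcal{D}_R}^{\perp}={}_R\mathcal{F}\cap{}_R\mathcal{CT}$ recalled before the theorem, and Theorem~\ref{the3.8} gives $\mathfrak{B}^{\mathcal{GF}_{\mathcal{D}_A}}_{\mathcal{GF}_{\mathcal{D}_B}}=\mathcal{GF}_{\mathcal{D}_T}$; after also checking that $\mathfrak{U}^{{}_A\mathcal{CT}}_{{}_B\mathcal{CT}}$ equals the class ${}_T\mathcal{CT}$ of cotorsion $T$-modules, the glued structure $\mathcal{M}(T)$ coincides with the Gorenstein $\mathcal{D}_T$-flat model structure and $\mathrm{Ho}(\mathcal{M}(T))\simeq\underline{\mathcal{GF}_{\mathcal{D}_T}\cap{}_T\mathcal{CT}}$, yielding (3).

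The step I expect to be the main obstacle is the verification of the \emph{perfect}/\emph{coperfect} conditions --- and, in case (3), the identification of the fibrant class with ${}_T\mathcal{CT}$ --- because these hinge on a precise description of the trivial classes ${}_A\mathcal{W},{}_B\mathcal{W}$ (respectively ${}_A\mathcal{E},{}_B\mathcal{E}$) of each model structure and on controlling how $M\otimes_B-$ and $\mathrm{Hom}_B(M,-)$ move modules in and out of them, the flat case being the least transparent. The two running hypotheses --- that $M_B$ is finitely presented with finite $\mathcal{F}_2$-injective dimension and that $\mathrm{Hom}_B(M,D)$ has finite $\mathcal{C}_2$-injective dimension for $D\in\mathcal{F}_2$ --- are precisely the input that keeps this dimension bookkeeping finite, and are exactly the hypotheses already used to prove Theorems~\ref{the3.4}, \ref{the3.7} and \ref{the3.8}; once those theorems and the semi-completeness of $\mathcal{D}_T$ from Proposition~\ref{prop3.2} are in hand, the remaining bookkeeping (matching left versus right modules and which of $\mathcal{D}_A,\mathcal{D}_B$ occupies which corner of $T$) is routine.
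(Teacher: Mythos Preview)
Your proposal is correct and follows essentially the same architecture as the paper's proof: for each of the three cases you feed the pair of model structures from Lemma~\ref{lem4.1} into the gluing Lemma~\ref{the4.6} (or its coperfect dual), using Lemma~\ref{lem4.3} for the $\mathrm{Tor}$/$\mathrm{Ext}$-vanishing and Theorems~\ref{the3.4}, \ref{the3.7}, \ref{the3.8} to identify the cofibrant/fibrant class of the glued structure with $\mathcal{GP}_{\mathcal{D}_T}$, $\mathcal{GI}_{\mathcal{D}_T}$, $\mathcal{GF}_{\mathcal{D}_T}$ respectively. The only notable difference is in the handling of the perfect/coperfect condition: the paper verifies it in one line via the chain $\mathfrak{B}^{\mathcal{GP}_{\mathcal{D}_A}}_{\mathcal{GP}_{\mathcal{D}_B}}\cap\mathfrak{U}^{{}_A\mathcal{W}}_{{}_B\mathcal{W}}=\mathcal{GP}_{\mathcal{D}_T}\cap{}_T\mathcal{W}={}_T\mathcal{P}=\mathfrak{B}^{{}_A\mathcal{P}}_{{}_B\mathcal{P}}$, exploiting that the Gorenstein $\mathcal{D}_T$-projective model structure already exists on $T$-Mod (since $\mathcal{D}_T$ is semi-complete) and is projective, whereas you unwind the condition componentwise down to the assertion $M\otimes_BQ\in{}_A\mathcal{W}$ for projective $Q$. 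Your route is more explicit about where the hypotheses enter, while the paper's route sidesteps any direct analysis of ${}_A\mathcal{W}$ by working at the $T$-level; the two are equivalent once one notes that a projective abelian model structure is determined by its cofibrant class.
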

\begin{proof}From Lemma \ref{lem4.1}, there are abelian model structures $\mathcal{M}(A)=(\mathcal{GP}_{\mathcal{D}_{A}},{_{A}\mathcal{W}},A\text{-Mod})$ and $\mathcal{M}(B)=(\mathcal{GP}_{\mathcal{D}_{B}},{_{B}\mathcal{W}},B\text{-Mod})$ on $A\text{-Mod}$ and $B\text{-Mod}$, respectively. By Lemma \ref{lem4.3} we have $\mathrm{Tor}_{1}^{B}(M,E)=0$ for any $E\in \mathcal{GP}_{\mathcal{D}_{B}}$. By Theorem \ref{the3.4} and the fact that the abelian module structure $\mathcal{M}(R)=(\mathcal{GP}_{\mathcal{D}_{R}},{_{R}\mathcal{W}}, R\text{-}\mathrm{Mod})$  is projective for any ring, we have
 $$\mathfrak{B}^{\mathcal{GP}_{\mathcal{D}_{A}}}_{\mathcal{GP}_{\mathcal{D}_{B}}}\cap \mathfrak{U}_{_{B}\mathcal{W}}^{_{A}\mathcal{W}}=\mathcal{GP}_{\mathcal{D}_{T}}\cap {_{T}\mathcal{W}}={_{T}\mathcal{P}}=\mathfrak{B}^{_{A}\mathcal{P}}_{_{B}\mathcal{P}}\cap \mathfrak{U}_{B\text{-Mod}}^{A\text{-Mod}}.$$
 It follows that the bimodule $M$ is perfect relative to $\mathcal{M}(A)$ and $\mathcal{M}(B)$. Thus the first recollement  follow from Lemma \ref{the4.6}.
 On the other hand, by Lemma \ref{lem4.1}, there are abelian model structures $\mathcal{M}'_{A}=(\mathrm{Mod}\text{-}A,\mathcal{V}_{A},\mathcal{GI}_{\mathcal{D}_{A}})$ and $\mathcal{M}'_{B}=(\mathrm{Mod}\text{-}B,\mathcal{V}_{B},\mathcal{GI}_{\mathcal{D}_{B}})$ on $\text{Mod-}A$ and $\text{Mod-}B$, respectively. By Lemma \ref{lem4.3} we have  $\mathrm{Ext}^{i}_{A}(M,E)=0$ for any $E\in \mathcal{GI}_{\mathcal{D}_{A}}$. By Theorem \ref{the3.7} and the fact that the abelian module structure $\mathcal{M}'_{R}=(\mathrm{Mod}\text{-}R,\mathcal{V}_{R},\mathcal{GI}_{\mathcal{D}_{R}})$  is injective for any ring, we have
 $$\mathfrak{U}_{\text{Mod-}A,\text{Mod-}B}\cap \mathfrak{T}_{\mathcal{I}_{A},\mathcal{I}_{B}}=\mathcal{I}_{T}=\mathcal{V}_{T}\cap \mathcal{GI}_{\mathcal{D}_{T}} =\mathfrak{U}_{\mathcal{V}_{A},\mathcal{V}_{B}}\cap \mathfrak{T}_{\mathcal{GI}_{\mathcal{D}_{A}},\mathcal{GI}_{\mathcal{D}_{B}}}.$$
 It follows that the bimodule $M$ is coperfect relative to $\mathcal{M}'_{A}$ and $\mathcal{M}'_{B}$. Thus the first recollement  follow from Lemma \ref{lem4.3}.
 Finally, by Lemma \ref{lem4.1}, there are abelian model structures $\mathcal{N}(A)=(\mathcal{GF}_{\mathcal{D}_{A}},{_{A}\mathcal{E}}, \mathcal{CT}_{A})$ and $\mathcal{N}(B)=(\mathcal{GF}_{\mathcal{D}_{B}},{_{B}\mathcal{E}}, \mathcal{CT}_{B})$ on $A\text{-Mod}$ and $B\text{-Mod}$, respectively. By the same proof of Lemma \ref{lem4.3}, we see that $\mathrm{Tor}_{1}^{B}(M,E)=0$ for any $E\in \mathcal{GF}_{\mathcal{D}_{B}}$. By Theorem \ref{the3.8}, we have
 $$\mathfrak{B}^{\mathcal{GF}_{\mathcal{D}_{A}}}_{\mathcal{GF}_{\mathcal{D}_{B}}}\cap \mathfrak{U}_{_{B}\mathcal{E}\cap {_{B}\mathcal{CT}}}^{_{A}\mathcal{E}\cap {_{A}\mathcal{CT}}}=\mathcal{GF}_{\mathcal{D}_{T}}\cap \mathcal{GF}_{\mathcal{D}_{T}}^{\perp} ={_{T}\mathcal{F}}\cap {_{T}\mathcal{CT}}=\mathfrak{B}^{_{A}\mathcal{F}}_{_{B}\mathcal{F}}\cap \mathfrak{U}_{_{B}\mathcal{CT}}^{_{A}\mathcal{CT}}.$$
 It follows that the bimodule $M$ is perfect relative to $\mathcal{N}(A)$ and $\mathcal{N}(B)$. Thus the second recollement  follow from Lemma \ref{the4.6}.
 \end{proof}
 Finally, we give some applications of Theorem \ref{the4.3} for  PGF, Ding injective and Gorenstein flat model structures, respectively.  One can compare it with \cite[Theorem 2.10]{ww21} and \cite[Theorem 4.12]{zhu20}.
\begin{corollary}\label{cor4.4}
 Suppose that $M_{B}$ is finitely presented and  has finite injective dimension, $_{A}M$ has finite flat dimension. Then we have  recollements
$$\xymatrixcolsep{4pc}\xymatrix{\underline{\mathcal{PGF}_{A}}
  \ar[r]^{\mathrm{L~i_{\ast}~}}&
  \ar@<-4ex>[l]_{\mathrm{L~i^{\ast}}}\ar@<3ex>[l]^{\mathrm{R~i^{!}}}\underline{\mathcal{PGF}_{T}}
\ar[r]^{\mathrm{L~j^{\ast}~}}&\ar@<-4ex>[l]_{\mathrm{L~j_{!}}}\ar@<3ex>[l]^{\mathrm{R~j_{\ast}}}
\underline{\mathcal{PGF}_{B}}};\leqno(1)$$
$$\xymatrixcolsep{4pc}\xymatrix{\underline{\mathcal{DI}_{A}}
  \ar[r]^{\mathrm{L~i_{\ast}~}}&
  \ar@<-4ex>[l]_{\mathrm{L~i^{\ast}}}\ar@<3ex>[l]^{\mathrm{R~i^{!}}}\underline{\mathcal{DI}_{T}}
\ar[r]^{\mathrm{L~j^{\ast}~}}&\ar@<-4ex>[l]_{\mathrm{L~j_{!}}}\ar@<3ex>[l]^{\mathrm{R~j_{\ast}}}
\underline{\mathcal{DI}_{B}}};\leqno(2)$$
$$\xymatrixcolsep{4pc}\xymatrix{\underline{\mathcal{GF}_{A}\cap {_{A}\mathcal{CT}}}
  \ar[r]^{\mathrm{L~i_{\ast}~}}&
  \ar@<-4ex>[l]_{\mathrm{L~i^{\ast}}}\ar@<3ex>[l]^{\mathrm{R~i^{!}}}\underline{\mathcal{GF}_{T}\cap {_{T}\mathcal{CT}}}
\ar[r]^{\mathrm{L~j^{\ast}~}}&\ar@<-4ex>[l]_{\mathrm{L~j_{!}}}\ar@<3ex>[l]^{\mathrm{R~j_{\ast}}}
\underline{\mathcal{GF}_{B}\cap {_{B}\mathcal{CT}}}}.\leqno(3)$$

\end{corollary}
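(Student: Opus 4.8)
\textbf{Proof plan for Corollary \ref{cor4.4}.}

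The plan is to derive each of the three recollements by specializing Theorem \ref{the4.3} to the appropriate semi-complete duality pair, so the only real work is checking that the hypotheses of Setup \ref{set3.3} and of Theorem \ref{the4.3} hold in each case. For part (1), I would take $\mathcal{D}_{A}=(\langle {_{A}\mathcal{F}}\rangle,\langle\mathcal{I}_{A}\rangle)$ and $\mathcal{D}_{B}=(\langle {_{B}\mathcal{F}}\rangle,\langle\mathcal{I}_{B}\rangle)$, which are semi-complete duality pairs by \cite[Lemmas 5.5--5.7]{CS21}. Since $M_{B}$ is finitely presented and has finite injective dimension, it has finite $\langle\mathcal{I}_{B}\rangle$-injective dimension, so Setup \ref{set3.3} holds. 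The extra hypothesis in Theorem \ref{the4.3} --- that $\Hom_{B}(M,D)$ has finite $\langle\mathcal{I}_{A}\rangle$-injective dimension for each $D\in\langle\mathcal{I}_{B}\rangle$ --- was already verified in the proof of Corollary \ref{cor3.6}: using \cite[Lemma 2.2]{mao20}, a right $B$-module that is finitely presented and flat as a left $A$-module sends $\langle\mathcal{I}_{B}\rangle$ into $\langle\mathcal{I}_{A}\rangle$ under $\Hom_{B}(-,-)$, and dévissage along a finite flat resolution of ${_{A}M}$ gives the finiteness of the $\langle\mathcal{I}_{A}\rangle$-injective dimension. Then Theorem \ref{the4.3}(1) yields the recollement among $\underline{\mathcal{GP}_{\mathcal{D}_{A}}}$, $\underline{\mathcal{GP}_{\mathcal{D}_{T}}}$, $\underline{\mathcal{GP}_{\mathcal{D}_{B}}}$; finally one identifies $\mathcal{GP}_{\mathcal{D}_{R}}=\mathcal{PGF}_{R}$ for $R\in\{A,B,T\}$ via \cite[Corollary 2.12]{gj21} (equivalently \cite[Theorem 3.4]{js18}), and Corollary \ref{cor3.7} guarantees that the duality pair $\mathcal{D}_{T}$ induced on $T$-Mod coincides with $(\langle {_{T}\mathcal{F}}\rangle,\langle\mathcal{I}_{T}\rangle)$, so that $\mathcal{GP}_{\mathcal{D}_{T}}=\mathcal{PGF}_{T}$.

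For part (2), I would again use the duality pairs $(\langle {_{A}\mathcal{F}}\rangle,\langle\mathcal{I}_{A}\rangle)$ and $(\langle {_{B}\mathcal{F}}\rangle,\langle\mathcal{I}_{B}\rangle)$, but now I only need $M_{B}$ to have finite FP-injective dimension to run Setup \ref{set3.3}, since $\mathcal{FI}_{B}\subseteq\langle\mathcal{I}_{B}\rangle$ (this is exactly the situation of Corollary \ref{cor3.8}); the hypothesis on $\Hom_{B}(M,D)$ is checked as above. Theorem \ref{the4.3}(2) then gives the recollement among the stable categories $\underline{\mathcal{GI}_{\mathcal{D}_{A}}}$, $\underline{\mathcal{GI}_{\mathcal{D}_{T}}}$, $\underline{\mathcal{GI}_{\mathcal{D}_{B}}}$, and \cite[Proposition 2.11]{gj21} identifies $\mathcal{GI}_{\mathcal{D}_{R}}$ with the class of Ding injective right $R$-modules; the hypothesis that $M_{B}$ has finite injective dimension (hence the stronger conclusion of Corollary \ref{cor3.8}(3)) is what lets us replace $\mathcal{GI}_{\mathcal{D}_{T}}$ by $\mathcal{DI}_{T}$. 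For part (3) the procedure is identical with the flat model structure: by \cite[Proposition 2.11]{gj21}, $\mathcal{GF}_{\mathcal{D}_{R}}$ is the class of Gorenstein flat $R$-modules when $\mathcal{D}_{R}=(\langle {_{R}\mathcal{F}}\rangle,\langle\mathcal{I}_{R}\rangle)$, and Theorem \ref{the4.3}(3) directly delivers the recollement among $\underline{\mathcal{GF}_{A}\cap {_{A}\mathcal{CT}}}$, $\underline{\mathcal{GF}_{T}\cap {_{T}\mathcal{CT}}}$, $\underline{\mathcal{GF}_{B}\cap {_{B}\mathcal{CT}}}$.

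The step I expect to require the most care is the verification, in each case, that the auxiliary condition of Theorem \ref{the4.3} (finiteness of the $\mathcal{C}_{2}$-injective dimension of $\Hom_{B}(M,D)$) is inherited from the hypotheses ``$M_{B}$ finitely presented'' and ``${_{A}M}$ of finite flat dimension''. This is essentially the content of the computation already carried out in the proof of Corollary \ref{cor3.6} (and reused in Corollary \ref{cor3.8}), so I would simply invoke it rather than repeat it; everything else is a bookkeeping identification of Gorenstein-$\mathcal{D}_{R}$ classes with the classical PGF, Ding injective, and Gorenstein flat classes via the cited results of \cite{gj21} and \cite{js18}, together with Corollary \ref{cor3.7} to match the induced duality pair on $T$-Mod with the flat-injective one. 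Hence the proof is short: \emph{apply Theorem \ref{the4.3} three times to the flat--injective duality pairs, using Corollaries \ref{cor3.6}, \ref{cor3.7} and \ref{cor3.8} (and the proof of Corollary \ref{cor3.6}) to see that the hypotheses are met and to translate the resulting stable categories into the stated ones.}
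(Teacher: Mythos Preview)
Your proposal is correct and follows essentially the same route as the paper: the paper's proof is a one-liner citing Theorem \ref{the4.3} together with Corollaries \ref{cor3.6}, \ref{cor3.8} and \ref{cor3.11}, and you have simply unpacked this by verifying the hypotheses of Setup \ref{set3.3} and Theorem \ref{the4.3} for the flat--injective duality pairs and then translating the resulting Gorenstein-$\mathcal{D}$ classes into $\mathcal{PGF}$, $\mathcal{DI}$, and $\mathcal{GF}$ via the results of \cite{gj21}. Your explicit invocation of Corollary \ref{cor3.7} (to match $\mathcal{D}_{T}$ with $(\langle {_{T}\mathcal{F}}\rangle,\langle\mathcal{I}_{T}\rangle)$) is exactly what makes the ``moreover'' clauses of Corollaries \ref{cor3.6}, \ref{cor3.8} and \ref{cor3.11} work, so the two arguments coincide.
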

\begin{proof} It follows by Theorem \ref{the4.3}, Corollaries \ref{cor3.6}, \ref{cor3.8} and \ref{cor3.11}.
\end{proof}

\bigskip

\renewcommand\refname{\bf References}

\vspace{4mm}
\small
\noindent\textbf{Haiyu Liu}\\
School of Mathematics and Physics, Jiangsu University of Technology, Changzhou 213001, China\\
E-mails: haiyuliumath@126.com\\[1mm]

\noindent\textbf{Rongmin Zhu}\\
School of Mathematical Sciences, Huaqiao University, Quanzhou 362021, China\\
E-mails: rongminzhu@hotmail.com\\[1mm]

\end{document}